\DeclareSymbolFont{largesymbols}{OMX}{zplm}{m}{n} 
\numberwithin{equation}{section}
\newcolumntype{C}{>{$}c<{$}} 
\theoremstyle{plain}
\newtheorem{thm}[]{Theorem}
\newtheorem{defn}{Definition}
\newtheorem{prop}[thm]{Proposition}
\newtheorem{lemma}[thm]{Lemma}
\newtheorem{conj}{Conjecture}
\newtheorem*{result}{Theorem}
\newcommand{\del}{\partial}
\newcommand{\cW}{\mathcal{W}}
\newcommand{\cM}{\mathcal{M}}
\newcommand{\alg}[1]{\mathfrak{#1}}
\newcommand{\group}[1]{\mathsf{#1}}
\newcommand{\func}[2]{#1 \left( #2 \right)}
\newcommand{\brac}[1]{\left( #1 \right)}
\newcommand{\set}[1]{\left\{ #1 \right\}}
\newcommand{\ZZ}{\mathbb{Z}}
\newcommand{\RR}{\mathbb{R}}
\newcommand{\CC}{\mathbb{C}}
\newcommand{\affine}[1]{\widehat{#1}}
\newcommand{\comm}[2]{\bigl[ #1 , #2 \bigr]}
\newcommand{\ket}[1]{\bigl\lvert #1 \bigr\rangle}
\newcommand{\IrrMod}[1]{\mathcal{L}_{#1}}
\newcommand{\TypMod}[1]{\mathcal{E}_{#1}}
\newcommand{\SLA}[2]{\alg{#1} \left( #2 \right)}
\newcommand{\AKMA}[2]{\affine{\alg{#1}} \left( #2 \right)}
\newcommand{\AKMSA}[3]{\affine{\alg{#1}} \left( #2 \middle\vert #3 \right)}
\newcommand{\SLSA}[3]{\alg{#1} \left( #2 \middle\vert #3 \right)}
\newcommand{\SLSG}[3]{\group{#1} \left( #2 \middle\vert #3 \right)}
\newcommand{\ch}[1]{\mathrm{ch} \bigl[ #1 \bigr]}
\newcommand{\fuse}{\mathbin{\times}}
\newcommand{\normord}[1]{\mbox{${} : #1 : {}$}} 
\newcommand{\FS}[1]{W^{\left( 2 \right)}_{#1}}
\newcommand{\secref}[1]{Section~\ref{#1}}
\DeclareMathOperator{\tr}{tr}
\DeclareMathOperator{\Com}{Com}
\newcommand{\dz}{\mathrm{d}}
\begin{document}

\title[Coset Constructions of Logarithmic $(1,p)$-Models]{Coset Constructions of Logarithmic $(1,p)$-Models}

\author[T Creutzig]{Thomas Creutzig}

\address[Thomas Creutzig]{
Fachbereich Mathematik \\
Technische Universit\"{a}t Darmstadt\\
Schlo\ss{}gartenstra\ss{}e 7\\
64289 Darmstadt\\ Germany
}

\email{tcreutzig@mathematik.tu-darmstadt.de}

\author[D Ridout]{David Ridout}

\address[David Ridout]{
Department of Theoretical Physics \\
Research School of Physics and Engineering;
and
Mathematical Sciences Institute;
Australian National University \\
Canberra, ACT 2600 \\
Australia
}

\email{david.ridout@anu.edu.au}

\author[S Wood]{Simon Wood}

\address[Simon Wood]{
Kavli Institute for the Physics and Mathematics of the Universe (WPI)\\
Todai Institutes for Advanced Study\\
The University of Tokyo\\
1-5, Kashiwanoha 5-Chome
Kashiwa-shi, Chiba 277-8583\\
Japan
}

\email{simon.wood@ipmu.jp}

\thanks{\today}

\begin{abstract}
One of the best understood families of logarithmic conformal field theories is that consisting of the $(1,p)$ models ($p = 2, 3, \ldots$) of central charge $c_{1,p} = 1 - 6 \brac{p-1}^2 / p$.  This family includes the theories corresponding to the singlet algebras $\cM(p)$ and the triplet algebras $\cW(p)$, as well as the ubiquitous symplectic fermions theory.  In this work, these algebras are realized through a coset construction.

The $W^{(2)}_n$-algebra of level $k$ was introduced by Feigin and Semikhatov as a (conjectured) quantum hamiltonian reduction of $\AKMA{sl}{n}_k$, generalising the Bershadsky-Polyakov algebra $W^{(2)}_3$.  Inspired by work of Adamovi\'{c} for $p=3$, vertex algebras $\mathcal B_p$ are constructed as subalgebras of the kernel of certain screening charges acting on a rank $2$ lattice vertex algebra of indefinite signature.  It is shown that for $p \leqslant 5$, the algebra $\mathcal B_p$ is a quotient of $W^{(2)}_{p-1}$ at level $-(p-1)^2/p$ and that the known part of the operator product algebra of the latter algebra is consistent with this holding for $p>5$ as well.  The triplet algebra $\cW(p)$ is then realised as a coset inside the full kernel of the screening operator, while the singlet algebra $\cM(p)$ is similarly realised inside $\mathcal B_p$.  As an application, and to illustrate these results, the coset character decompositions are explicitly worked out for $p=2$ and $3$.

\vspace*{2mm}
\noindent {MSC}: 17B68, 17B69

\noindent {Keywords}: Logarithmic conformal field theory, vertex algebras
\end{abstract}

\maketitle

\onehalfspacing

\section{Introduction} \label{sec:Intro}

The principal examples of logarithmic conformal field theories are the families associated to affine superalgebras, to admissible level affine algebras, and to the kernels of screenings acting on lattice theories.  In all three families, only a few examples are well-understood in the sense that the representation theory has been worked out in detail.  These examples include those associated to the $A_1$-root lattice, the logarithmic $(q,p)$ minimal models \cite{Kau,GK,FHST,FGST,GR,AM1,AM2,TW1,TW2}, the $\SLSG{GL}{1}{1}$ WZNW theory \cite{SS,CS,CRo,CR2,CR3}, and the admissible level theories of $\AKMA{sl}{2}$ at $k=-1/2$ \cite{R1,R2,R3,CR1} and $k=-4/3$ \cite{G,A2,AM1,CR1}.  The representation theory of the $(1,p)$ series is very similar to that of $\AKMSA{gl}{1}{1}$ and $\AKMA{sl}{2}$ at admissible levels.  Indeed, there are several relationships known between the ``smallest'' members of each logarithmic family:  The logarithmic $(1,2)$-model may be described as a coset of a simple current extension\footnote{
For the purposes of this article, a simple current may be defined to be a simple module which has an inverse in the fusion ring.} of $\AKMA{sl}{2}$ at level $-1/2$ \cite{R2}, while the $(1,3)$-model is a coset of $\AKMA{sl}{2}_{-4/3}$ \cite{A2}.  Moreover, $\AKMA{sl}{2}_{-1/2}$ is itself realisable as a coset of (an extension of) $\AKMSA{gl}{1}{1}$ \cite{CR3}.
  
The purpose of this work is to extend this picture by providing coset constructions for the $(1,p)$ singlet algebras $\cM(p)$ and triplet algebras $\cW(p)$, for all $p$. For this, the crucial hint is the work \cite{FS} of Feigin and Semikhatov on algebras denoted by $W^{(2)}_n$, which generalise the well-known Bershadsky-Polyakov algebra \cite{B,P}. These algebras are constructed in two ways, first as a kernel of screenings associated with the quantum group of $\SLSA{sl}{n}{1}$ and second as a subalgebra of $\AKMSA{sl}{n}{1}_k \otimes V_L$ commuting with the subalgebra $\AKMA{sl}{n}_k \otimes \AKMA{gl}{1}$.  Here, $V_L$ is a rank one lattice vertex algebra and the affine vertex superalgebras are the universal ones of the indicated levels $k$. We have the following picture in mind:
\begin{equation*}
\parbox[c]{0.83\textwidth}{
\begin{center}
\begin{tikzpicture}[auto,thick,
	nom/.style={rectangle,draw=black!20,fill=black!20,minimum height=20pt}
	]
\node (botleft) at (0,-1.25) [] {$W^{(2)}_n$-algebra at level $-n^2/(n+1)$};
\node (topmiddle) at (4,1.25) [] {$\AKMSA{sl}{n}{1}$ at level $-2n$};
\node (botright) at (8,-1.25) [] {$\cM(n+1)$ and $\cW(n+1)$};
\draw [->] (topmiddle) to node [above left] {\cite{FS}} (botleft);
\draw [->] (topmiddle) to node [above] {} (botright);
\draw [->] (botleft) to node [above] {this work} (botright);
\end{tikzpicture}
\end{center}
} .
\end{equation*}
One arrow of this diagram was essentially explained by Feigin and Semikhatov. In the present work, we are interested in a coset construction starting from the $W^{(2)}_n$-algebras and yielding the vertex algebras $\cM(p)$ and $\cW(p)$ of the logarithmic $(1,p)$-models.  This generalises the results for $p=2$ and $3$ mentioned above.

In order to understand the relation between the $W^{(2)}_n$-algebras and the $(1,n+1)$-theories, one needs a suitably explicit description of the $W^{(2)}_n$-algebras at level $-n^2/(n+1)$. Adamovi\'{c} \cite{A2} provides such a description for $W^{(2)}_2 \equiv \AKMA{sl}{2}$ (the level is then $-4/3$). Recall that the $(1,p)$-triplet algebra is constructed as the kernel of a screening inside an appropriate rank one lattice algebra associated to the (rescaled) $A_1$ root lattice. Adamovi\'{c} considers a rank two lattice of indefinite signature, whose associated lattice vertex algebra contains the rank one lattice vertex algebra of the $(1,3)$-theory as a subalgebra. For the screening charge, he chooses that of the $(1,3)$-theory so as to guarantee that the kernel contains the $(1,3)$-triplet algebra $\cW(3)$ as a subalgebra. But, he also finds the simple affine vertex algebra of $\SLA{sl}{2}$ at level $-4/3$ as a subalgebra of the screening's kernel.

Our first result generalises this. We consider an appropriate rank two lattice $D$ of indefinite signature, such that the lattice of the $(1,p)$-triplet theory is a sublattice. We choose the screening charge to be that of the $(1,p)$-triplet theory so that the $(1,p)$-triplet algebra is contained in the screening's kernel. In addition, we find another subalgebra, which we call $\mathcal B_p$, that is generated by two fields of conformal dimension $n/2$. We compute the operator product algebra of $\mathcal B_p$ and also some relations in $\mathcal B_5$. The result can be summarized as 

\begin{result}
For $p=2,3,4,5$, the algebra $\mathcal B_p$ is a quotient of $W^{(2)}_{p-1}$ at level $-(p-1)^2/p$. 
In general, comparing operator product algebras is consistent with the conjecture that $\mathcal B_p$ is a quotient of $W^{(2)}_{p-1}$ at level $-(p-1)^2/p$ for all $p$, 
\end{result}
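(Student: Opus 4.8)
The goal is to produce a surjective vertex algebra homomorphism $\FS{p-1} \to \mathcal B_p$ at level $-(p-1)^2/p$, using the fact that the universal Feigin--Semikhatov algebra is freely (strongly) generated by an energy-momentum field $T$, a Heisenberg field $H$, and two fields $E^{\pm}$ of conformal weight $(p-1)/2$ carrying opposite $H$-charges. First I would identify, inside $\mathcal B_p$, the fields playing the roles of these generators: the two weight-$(p-1)/2$ fields $B^{\pm}$ that generate $\mathcal B_p$ by construction are the natural candidates for $E^{\pm}$, while $T$ and $H$ are built directly from the rank-$2$ lattice data, namely $H$ as an appropriate current in the Heisenberg subalgebra and $T$ as the associated background-charge Virasoro field. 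Since the universal $\FS{p-1}$ is freely generated, any assignment of its generators to fields of $\mathcal B_p$ whose mutual \opes{} reproduce the defining Feigin--Semikhatov relations automatically extends to a vertex algebra homomorphism; surjectivity is then immediate because the $B^{\pm}$ already generate $\mathcal B_p$.

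The verification reduces to standard lattice vertex algebra computations. Using the Wick-type formulas for products of lattice vertex operators, one computes the singular parts of the \opes{} among $T$, $H$, $B^{+}$ and $B^{-}$. The conformal weights and Heisenberg charges of $B^{\pm}$ are read off immediately from the lattice momenta, and checking that the central charge of the lattice Virasoro field equals $c_{1,p} = 1 - 6(p-1)^2/p$ --- which is precisely the Feigin--Semikhatov central charge of $\FS{p-1}$ at level $-(p-1)^2/p$ --- fixes the normalisation. The decisive relation is the $B^{+} B^{-}$ \ope{}: in $\FS{p-1}$ the corresponding $E^{+} E^{-}$ \ope{} has a leading pole of order $p-1$ and expresses the subleading singular coefficients as specific normally ordered polynomials in $H$, $T$ and their derivatives, with structure constants fixed by the level. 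The task is to expand the lattice product $B^{+}(z) B^{-}(w)$ and recognise each of its singular coefficients as the image of the corresponding Feigin--Semikhatov field.

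For $p \leqslant 5$ the generating fields have weight $(p-1)/2 \leqslant 2$, so the $B^{+} B^{-}$ \ope{} has at most four singular terms and the composite fields that can occur are of low weight; the expansion is then finite and may be carried out and matched completely against the Feigin--Semikhatov relations. For $p = 5$ this matching additionally calls on the relations in $\FS{4}$ computed in this work, since Feigin and Semikhatov do not record the full \ope{} in that case. This establishes the surjection and hence realises $\mathcal B_p$ as a quotient of $\FS{p-1}$; I would not claim injectivity, since at this non-generic level the universal algebra is expected to be reducible, which is exactly why ``quotient'' rather than ``isomorphism'' appears in the statement.

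For general $p$, by contrast, the leading pole of the $B^{+} B^{-}$ \ope{} has order $p-1$ and its subleading coefficients are increasingly intricate normally ordered polynomials whose Feigin--Semikhatov structure constants are known only in part. This is exactly where I expect the main obstacle to lie: the full $E^{+} E^{-}$ \ope{} of $\FS{n}$ is not available in closed form for large $n$, so the term-by-term identification that works for small $p$ simply cannot be completed. One can therefore only verify the portion of the operator product algebra that is explicitly known --- the central charge, the weight and charge data, and the leading singular coefficients of $B^{+} B^{-}$ --- and confirm that these agree with the Feigin--Semikhatov relations. This yields the consistency asserted in the general case but stops short of a proof.
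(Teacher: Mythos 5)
Your overall strategy is the same as the paper's: define a map on generators ($H \mapsto h$, $E \mapsto e$, $F \mapsto f$, $L \mapsto T$), verify it respects the singular \opes{} computed by free-field methods in $V_D$, deduce surjectivity from the fact that $e$ and $f$ generate $\mathcal B_p$, and for general $p$ claim only consistency with the known part of the Feigin--Semikhatov operator product algebra. For $p \leq 4$ your argument is essentially complete, since there the strong generators of $\FS{p-1}$ really are just $E$, $F$, $H$, $T$ (for $p=2$ only $E$ and $F$, with $h$ and $T$ arising in the regular terms as normally ordered polynomials in $e$, $f$).

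However, there is a genuine gap at $p=5$, and it stems from your premise that the universal $\FS{p-1}$ is freely strongly generated by $T$, $H$, $E^{\pm}$. This fails precisely when $p \geq 5$: the dimension-$3$ Virasoro primary $W$ enters the \emph{singular} part of the $E F$ \ope{} (in the first-order pole when $p=5$) and is therefore a fifth strong generator, so matching the mutual \opes{} of the four fields alone neither determines the map nor guarantees it extends to a homomorphism. Since no independent dimension-$3$ primary occurs in the corresponding pole of $e(z)f(w)$ in $\mathcal B_5$, one is forced to set $W \mapsto 0$, and compatibility with the \opes{} of $W$ against $E$ and $F$ --- which Feigin and Semikhatov \emph{did} record at $k=-16/5$, contrary to your aside --- then requires two nontrivial null-field relations to hold in $\mathcal B_5$, namely
\begin{equation*}
\normord{h(z)\del e(z)}-2\normord{\del h(z)e(z)}+\tfrac{1}{5}\del^2 e(z)-\normord{T(z)e(z)} = 0
\end{equation*}
and its analogue for $f$ (Lemma \ref{lemma:rel}). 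Verifying these by direct computation, using the explicit free-field form of $f$, is the substantive content of the paper's $p=5$ case; the $W W$ \ope{} further forces $\Lambda \mapsto 0$ and shows that $W$ generates a proper ideal, so that $\mathcal B_5$ is a genuine (non-free) quotient. Your sentence attributing the extra input to ``relations in $\FS{4}$ computed in this work'' has this backwards --- the new computation is of relations in $\mathcal B_5$, not of \opes{} in $\FS{4}$ --- and without that step your proposal does not establish the $p=5$ case. A minor further point: the $p=2$ \ope{} is degenerate (only a simple pole), and the paper treats it separately, with $\omega(W) = \mathbb{W} \neq 0$ required for consistency with the simplicity of the $\beta\gamma$ system.
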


\noindent As the operator product algebra of the $W^{(2)}_n$-algebra for $n\geq 4$ is only partially known (see \cite{FS}), we are unable to make stronger statements concerning the relationship between these algebras and the $\mathcal B_p$. We remark, however, that the dimension three Virasoro primary field of $W^{(2)}_n$ (that appears for $n>3$) is in the kernel of the proposed homomorphism.

Now that we have an explicit description of the algebra $\mathcal B_p$, we investigate its coset algebras. In general by this we mean the following:

\begin{defn}
Let $A$ be a vertex algebra and $B \subseteq A$ a subalgebra. Then, the \emph{coset algebra} of $B$ in $A$ is the commutant subalgebra $\Com(B,A) \subseteq A$. In physics, the conformal field theory corresponding to the coset algebra is usually denoted by
\[
\frac{A}{B}.
\]
If $B = \Com(\Com(B,A),A)$, then $B$ and $\Com(B,A)$ are said to form a \emph{Howe pair} inside $A$. 
\end{defn} 

\noindent Mutually commuting pairs in the theory of vertex algebras have been introduced in \cite{DM,LL}, and examples
containing the singlet algebras $\cM(2)$ and $\cM(3)$ appear in \cite{L,CL} and \cite{A2}, respectively.
Our main result is then 

\begin{result}
Within the kernel of the screening operator, the $(1,p)$-triplet algebra $\cW(p)$ and a certain rank one lattice vertex algebra form a Howe pair.  Furthermore, the $(1,p)$-singlet algebra $\cM(p)$ and a certain rank one Heisenberg vertex algebra form a Howe pair inside $\mathcal B_p$.
\end{result}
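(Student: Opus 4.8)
The plan is to reduce both Howe-pair assertions to one structural statement: a decomposition of the ambient vertex algebra as a multiplicity-free module over the tensor product of the two candidate commutants, with the vacuum of each factor paired to the vacuum of the other. The proof of the first result equips us with an explicit free-field realisation inside the rank two lattice vertex algebra: $\cW(p)$, $\cM(p)$ and $\mathcal{B}_p$ are all built from one distinguished direction of the rank two Heisenberg algebra, namely the direction carrying the $(1,p)$ background charge and supporting the screening $Q$, while the orthogonal direction is free. I would therefore split the rank two Heisenberg algebra as an orthogonal sum $\pi_{\parallel} \oplus \pi_{\perp}$, with $\cW(p)$ and $\cM(p)$ living in the sub-vertex-algebra generated by $\pi_{\parallel}$ together with the relevant lattice exponentials, and with the two Howe partners, the rank one lattice algebra $V_L$ and the rank one Heisenberg $\pi_H$, generated by $\pi_{\perp}$.

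The easy inclusions are then immediate. Since the screening $Q$ involves only the $\pi_{\parallel}$-direction, it annihilates $V_L$, so $V_L \subseteq \ker Q$; and any generator of $V_L$ has regular operator product expansion with any generator of $\cW(p)$ because the two are built from mutually orthogonal Heisenberg fields. Hence $V_L \subseteq \Com(\cW(p), \ker Q)$ and, symmetrically, $\cW(p) \subseteq \Com(V_L, \ker Q)$; the identical argument gives $\cM(p) \subseteq \Com(\pi_H, \mathcal{B}_p)$ and $\pi_H \subseteq \Com(\cM(p), \mathcal{B}_p)$. It remains to prove the reverse inclusions, which is where the decomposition is needed.

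For the triplet statement I would grade $\ker Q$ by $\pi_{\perp}$-momentum (equivalently by $V_L$-charge): within each charge sector the $\pi_{\perp}$-oscillators rebuild a full $V_L$-Fock module, while the $\pi_{\parallel}$-data surviving $Q$ assembles into a $\cW(p)$-module. The aim is to establish a decomposition
\[
\ker Q = \bigoplus_{i} M_i \otimes N_i,
\]
with the $M_i$ irreducible $\cW(p)$-modules, the $N_i$ irreducible $V_L$-modules, and $M_0 \otimes N_0 = \cW(p) \otimes V_L$ the vacuum summand. Granting this, the Howe-pair property is formal: a vector lies in $\Com(\cW(p), \ker Q)$ exactly when its components are annihilated by the nonnegative modes of $\cW(p)$, in particular killed by $L_0^{\cW(p)} = (\omega_{\cW(p)})_{(1)}$; tracking which $M_i$ contribute a $\cW(p)$-primary of vanishing $\cW(p)$-weight then isolates the vacuum summand and yields $\Com(\cW(p), \ker Q) = V_L$. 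Exchanging the roles of the two factors gives $\Com(V_L, \ker Q) = \cW(p)$, hence $\cW(p) = \Com(\Com(\cW(p), \ker Q), \ker Q)$, which is precisely the Howe-pair condition. The singlet statement is proved the same way inside $\mathcal{B}_p$, grading by the charge of the Heisenberg current of $\mathcal{B}_p$ that generates $\pi_H$: the commutant of $\pi_H$ is the charge-zero, Heisenberg-primary subspace, which the decomposition identifies with $\cM(p)$, and the reverse computation recovers $\pi_H$.

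The main obstacle is establishing these decompositions with enough precision to bound the commutant from above, rather than the formal manipulations above. The natural engine is a refined character computation: the graded dimension of $\ker Q$ (respectively $\mathcal{B}_p$), recorded with respect to both Heisenberg directions, is an explicit lattice series, and one must factor it as the claimed sum of products of $\cW(p)$- (respectively $\cM(p)$-) characters with $V_L$- (respectively $\pi_H$-) characters, using the known classification and characters of the singlet and triplet modules. The delicate point, and the true crux, is to verify that no unexpected fields enter the commutant: in the logarithmic setting the non-vacuum $\cW(p)$- and $\cM(p)$-modules need not have strictly positive minimal conformal weight, so one cannot simply invoke weight positivity to discard the non-vacuum summands. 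Instead one must check, using the explicit pairing of modules dictated by the charge grading, that every $\cW(p)$-primary (respectively $\cM(p)$-primary) of vanishing sub-algebra weight occurring in the decomposition already lies in $V_L$ (respectively $\pi_H$). Carrying out this distinctness-and-pairing analysis is the heart of the proof.
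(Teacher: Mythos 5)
Your high-level skeleton for the triplet statement (decompose the kernel over $\cW(p) \otimes V_{D_-}$, then isolate the vacuum summand via invariant states) matches the paper's, but your proposed engine --- factoring the doubly-graded character of the kernel --- is too weak to deliver the decomposition you need. Character identities only control the image in the Grothendieck group; in this non-semisimple setting they cannot distinguish a direct sum $\bigoplus_i M_i \otimes N_i$ from a non-split extension with the same composition factors, and your ``distinctness-and-pairing analysis'' never leaves the character level, so it cannot bound the commutant from above. The paper obtains the honest module decomposition structurally, with no characters at all: exactness of the Felder complexes, i.e.\ \eqref{eq:MinF} with $s=1$ together with \eqref{eq:tripletdecomp}, identifies $\ker_{V_{D_+}}(Q_-) \cong W_{1,1}$ and $\ker_{V_{D_+ + \alpha_+/2}}(Q_-) \cong W_{2,1}$ as modules, whence $\ker_{V_D}(Q_-) \cong W_{1,1}\otimes \mathsf{V}_{[0]} \oplus W_{2,1}\otimes \mathsf{V}_{[2]}$ --- just two summands, not a large multiplicity-free family. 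The vacuum-isolation step then works because an irreducible module other than $W_{1,1}$ (respectively $\mathsf{V}_{[0]}$) contains no vacuum state at all, a stronger statement than tracking conformal weight zero. This half of your plan is repairable by substituting this structural input for the character computation.

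For the singlet statement there are two genuine gaps. First, you assume from the outset that $\cM(p)$ sits inside $\mathcal B_p$; but $\mathcal B_p$ is by definition generated by $e$, $h$, $f$ and $T$, and it is not at all obvious that the singlet generator $W^0$ belongs to it. The paper must prove this by an explicit computation: the field $v \propto e_{-p-1}f$ satisfies $Q_+^2 \normord{e^{-\alpha_+(\beta_+\pm\beta_-)/2}} = 0$ and hence $Q_+ v = Q_+ W^0$, so $v - W^0 \in \ker_{M}(Q_+)$, which by Adamovi\'{c}'s Lemma \ref{thm:adam} is generated by $T'$ and $\del\beta_-$ and already lies in $\mathcal B_p$. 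Second, your character engine cannot even start here: $\ch{\mathcal B_p}$ is \emph{not} an explicit lattice series, since $\mathcal B_p$ is a proper subalgebra of the screening kernel whose precise size is unknown for general $p$ (it is only conjecturally a quotient of $\FS{p-1}$ for $p>5$), so there is nothing to factor. The paper circumvents any decomposition of $\mathcal B_p$ entirely: it shows both commutants are contained in $\ker_{\mathcal B_p}(h_0)$ --- for $\Com(\cM(p),\mathcal B_p)$ by embedding $\mathcal B_p$ in the charge-graded algebra $V = \bigoplus_n V_n$ and using positivity of the $T'_0$-eigenvalues $\frac{p}{4} n \bigl( n + \frac{2(p-1)}{p} \bigr)$ for $n \neq 0,-1$, with the sector $n=-1$ excluded inside $\mathcal B_p$ because its minimal field there is $f$, of positive $T'_0$-eigenvalue --- then identifies $\ker_{\mathcal B_p}(h_0) = \cM(p)\otimes M_-$ and concludes by simplicity of the two tensor factors. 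Without a replacement for this containment argument, your reverse inclusions for the singlet Howe pair do not go through.
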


\noindent It is somewhat remarkable that the very explicit descriptions of the algebras involved allow us to exhaustively describe these commutants.

Given a vertex operator algebra together with a mutually commuting pair of subalgebras, an important question is how a given vertex algebra module will decompose into modules of the two subalgebras. Consider the cases $p=2$, for which $\mathcal B_2$ is the rank one $\beta\gamma$ vertex algebra, and $p=3$, for which $\mathcal B_3$ is the simple affine vertex algebra of $\AKMA{sl}{2}_{-4/3}$. In both cases, characters are known for the full spectrum of modules and extended algebras corresponding to simple currents are known \cite{CR1}. As an application of our results, we decompose characters of all the irreducible $\mathcal B_p$-modules, for $p=2,3$, into irreducible characters of $\cM(p)$ and the appropriate rank one Heisenberg algebra. In addition, we find the simple current extensions whose modules' characters can be decomposed into those of $\cW(p)$ and the appropriate rank one lattice vertex operator algebra. When $p=2$, $\mathcal B_2$ is itself a simple current extension
of $\AKMA{sl}{2}_{-1/2}$ \cite{R3} and we provide character decompositions for the latter into $\cM(2)$- and $\cW(2)$-characters.

The article is organized as follows. In section two, we provide necessary information concerning the triplet algebra $\cW(p)$ and the singlet algebra $\cM(p)$. The main results are then proven in section three, where we first construct the vertex algebra $\mathcal B_p$, compute the first few leading terms of its operator product algebra, and compare the result with that of the $W^{(2)}_{p-1}$-algebra at level $-(p-1)^2/p$. The second part of this section then proves that $\cW(p)$ may be realised as a coset algebra inside the kernel of a screening operator, while $\cM(p)$ may be realised as a coset algebra inside $\mathcal B_p$. Section four then details the character decompositions that illustrate our results for $p=2$ and $p=3$.

\subsection*{Acknowledgements} We thank Andrew Linshaw and Antun Milas for
carefully reading the manuscript and useful discussions related to this work.  We also thank the referee for their very helpful comments.  DR's research is supported by an
Australian Research Council Discovery Project DP1093910.
SW's work is supported by the World Premier International Research Center
Initiative (WPI Initiative), MEXT, Japan; the Grant-in-Aid for JSPS Fellows
number 2301793; and the JSPS fellowship for foreign researchers P11793.

\section{$\mathcal{W}(p)$ and $\cM(p)$ theories} \label{sec:WnM}

In this section, we outline the representation theory of the \(\mathcal{W}(p)\) and \(\mathcal{M}(p)\) vertex operator algebras --- to be defined in the following.  This summary is based on \cite{A1} and \cite{NT}.

\subsection{The free boson or Heisenberg vertex operator algebra}

The Heisenberg vertex operator algebra is that 
whose field modes are given by sums of products of
generators of the Heisenberg algebra \(\mathcal{H}\). This algebra is an associative complex algebra
generated by an infinite number of generators \(a_n, n\in\mathbb{Z}\), satisfying the commutation relations
\[
  [a_m,a_n]=m\delta_{m+n,0} 1 \,.
\]
The Heisenberg algebra contains a number of commutative subalgebras. The most important one for this paper is
\[
\mathcal{H}^{\geq}=\mathbb{C}[a_0,a_1,a_2,\dots]\,.
\]
The highest weight representations \(\mathcal{F}_\lambda\) of \(\mathcal{H}\)
are called Feigin-Fuchs modules or Fock spaces.
They are
uniquely characterised by their Heisenberg highest
weight \(\lambda\in\mathbb{C}\).\footnote{For physics applications, one usually restricts oneself to real \(\lambda\), which is what we will do in later sections.} If we denote the highest weight state by
\(|\lambda\rangle\in\mathcal{F}_\lambda\), so that
\begin{align}
  a_n|\lambda\rangle=\delta_{n,0} \lambda|\lambda\rangle\,,\quad n\geq 0\,,
\end{align}
then \(\mathcal{F}_\lambda\) can be constructed as
\begin{align}
  \mathcal{F}_\lambda=\mathcal{H}\otimes_{\mathcal{H}^{\geq}}\CC|\lambda\rangle\,.
\end{align}

The weight 0 Fock space \(\mathcal{F}_0\) carries the structure of a vertex operator algebra -- the so called
Heisenberg vertex operator algebra.
As a vertex operator algebra, \(\mathcal{F}_0\) is generated by the field
\begin{align}
  a(z)=\sum_{n\in \mathbb{Z}}a_n z^{-n-1}\,,
\end{align}
which satisfies the operator product expansion
\begin{align}
  a(z)a(w)\sim \frac{1}{(z-w)^2}\,.
\end{align}
The choice of conformal structure is not unique. For any \(\alpha_0\in\mathbb{C}\), one can define a Virasoro field
\begin{align}
  T(z)=\frac{1}{2}:a(z)^2:+\frac{\alpha_0}{2}\partial a(z)\,,
\end{align}
where \(:\cdots :\) denotes normal ordering, meaning that one arranges 
the Heisenberg generators by ascending mode number.
The central charge defined by this choice of Virasoro field is
\begin{align}
  c_{\alpha_0}=1-3\alpha_0^2\,.
\end{align}

The primary fields corresponding to the highest weight states \(|\lambda\rangle\in\mathcal{F}_\lambda\) are constructed 
by means of an 
auxiliary field which is the formal primitive of \(a(z)\):
\begin{align}
  \phi(z)=\hat a +a_0\log z -\sum_{n\neq0}\frac{a_n}{n}z^{-n}\,.
\end{align}
Here,
\begin{align}
  [a_m,\hat a]=\delta_{m,0} 1 \,.
\end{align}
Exponentials of the auxiliary generator \(\hat a\) shift the weight of the Fock spaces, defining maps 
\begin{align}
  e^{\mu\hat a} \colon \mathcal{F}_{\lambda}\longrightarrow \mathcal{F}_{\lambda+\mu}\,.
\end{align}
The primary field corresponding to the state \(|\lambda\rangle\) is given by
\begin{align}\label{eq:screeningop}
  V_\lambda(z)={}:e^{\lambda \phi(z)}:{}=e^{\lambda\hat a}z^{\lambda a_0} e^{\lambda\sum_{n\geq 1}a_{-n}z^n/n}e^{-\lambda\sum_{n\geq1}a_{n}z^{-n}/n}\,.
\end{align}
The conformal weight 
of this primary field
is
\begin{align}\label{eq:weightformula}
  h_\lambda=\frac{\lambda}{2}(\lambda-\alpha_0)\,.
\end{align}

\subsection{The lattice vertex operator algebra \(\mathcal{V}(p)\)}

For special values of \(\alpha_0\), one can define a 
lattice vertex operator algebra \(\mathcal{V}(p)\). Let
\(p\) be an integer greater than one and define \(\alpha_+=\sqrt{2p}\), \(\alpha_-=-\sqrt{2/p}\) and
\begin{align}\label{alphaformulae}
  \alpha_{r,s}=\frac{1-r}{2}\alpha_++\frac{1-s}{2}\alpha_-\,,
\end{align}
where \(r\) and \(s\) are integers.
Note that \(\alpha_{r,s}\) is periodic:  \(\alpha_{r,s}=\alpha_{r+1,s+p}\). We set the parameter \(\alpha_0\) of the Heisenberg
vertex operator algebra to \(\alpha_0=\alpha_++\alpha_-\), so that the Virasoro field is given by
\begin{align}
  T(z)=\frac{1}{2}:a(z)^2:+\frac{p-1}{\sqrt{2p}}\partial a(z)
\end{align}
and the central charge by
\begin{align}
  c_p=1-6\frac{(p-1)^2}{p}\,.
\end{align}

We introduce the lattices
\begin{align}
  L=\mathbb{Z}\alpha_+, \qquad
  L^\vee=\hom_{\mathbb{Z}}(L,\mathbb{Z})=\mathbb{Z}\frac{\alpha_-}{2}\,.
\end{align}
Then, \(\alpha_{r,s}\in L^\vee\) for all \(r,s\in\mathbb{Z}\). The lattice algebra \(\mathcal{V}(p)\) is an extension
of the Heisenberg vertex operator algebra \(\mathcal{F}_0\) which, as a
Heisenberg module,
 is given by an infinite sum 
of Fock spaces: 
\begin{align}\label{eq:Vpsum}
  \mathcal{V}(p)=\bigoplus_{\lambda\in L} \mathcal{F}_\lambda\,.
\end{align}
There are \(2p\) isomorphism classes of irreducible 
\(\mathcal{V}(p)\)-modules \cite{D}. They are parametrised by the cosets \([\mu]\in L^\vee/L\):
\begin{align}\label{eq:Vmodsum}
  \mathcal{V}_{[\mu]}=\bigoplus_{\lambda\in [\mu]}\mathcal{F}_{\lambda}\,.
\end{align}
If we label the simple \(\mathcal{V}(p)\)-modules by \(\alpha_{r,s}\), for \(r=1,2,\ 1\leq s\leq p\), then the definitions
\eqref{eq:Vpsum} and \eqref{eq:Vmodsum} can be reexpressed as
\begin{align}
  \mathcal{V}_{[\alpha_{r,s}]}=\bigoplus_{n\in\mathbb{Z}}\mathcal{F}_{\alpha_{r+2n,s}}\,.
\end{align}

In more physical terms, $\mathcal{V}(p)$ is the extension of $\mathcal{F}_0$, or rather its associated vertex operator algebra, by the simple current group generated by $\mathcal{F}_{\alpha_+}$ under fusion.  It is easy to check that the extension fields are all mutually bosonic and that their conformal dimensions are integers.  The reduction from a continuous spectrum to a finite spectrum may be explained by noting that the constraint on $[\mu] \in L^{\vee} / L = \set{0, -\tfrac{1}{2} \alpha_-, -\alpha_-, \ldots, -\tfrac{1}{2} \brac{2p-1} \alpha_-}$ in the extended algebra module $\mathcal{V}_{[\mu]}$ arises from requiring that the conformal dimensions of the fields of $\mathcal{V}_{[\mu]}$ all differ from one another by integers.  These modules therefore constitute the untwisted sector of the extended theory.

\subsection{Screening operators and the singlet and triplet algebras} \label{sec:Screening}

By the formula \eqref{eq:weightformula} for conformal weights, there are two primary weight 1 fields
\begin{align}
  Q_\pm(z)=V_{\alpha_{\pm}}(z)\,,
\end{align}
which can be used to construct 
screening operators, though we will only be using \(Q_-(z)\) for this purpose here.
The singlet vertex operator algebra \(\mathcal{M}(p)\) is defined to be the
vertex operator subalgebra of \(\mathcal{F}_0\) given by
\begin{align}
  \ker \left(\oint Q_-(z)\,\dz z \colon 
  \mathcal{F}_0\longrightarrow \mathcal{F}_{\alpha_-}\right)\,,
\end{align}
while the triplet vertex operator algebra \(\mathcal{W}(p)\) is the vertex operator subalgebra of \(\mathcal{V}(p)\) given by
\begin{align}
  \ker \left(\oint Q_-(z)\,\dz z \colon  
  \mathcal{V}_{[0]}\longrightarrow \mathcal{V}_{[\alpha_-]}\right)\,.
\end{align}

As a vertex operator algebra, \(\mathcal{W}(p)\) is generated by the Virasoro
field \(T(z)\) it inherits from 
$\mathcal{V}_{[0]}$ and two additional weight
\(2p-1\) Virasoro primary fields \(W^{\pm}(z)\).
These two weight \(2p-1\) fields generate an additional weight \(2p-1\)
Virasoro primary field \(W^0(z)\) in their operator product expansion, hence the name 
``triplet algebra''.
As a Virasoro module, \(\mathcal{W}(p)\) decomposes into an infinite direct sum of irreducible Virasoro modules:
\begin{align}
  \mathcal{W}(p)=\bigoplus_{n\geq 0}(2n+1)L(h_{\alpha_{2n+1,1}},c_p)\,.
\end{align}
Here,  
\(L(h,c)\) is the irreducible Virasoro module of weight \(h\) and central charge \(c\).

The singlet algebra \(\mathcal{M}(p)\) is not only a vertex operator subalgebra of \(\mathcal{F}_0\), but also of \(\mathcal{W}(p)\).
In fact it can alternatively 
be defined as
\begin{align}
  \mathcal{M}(p)=\mathcal{F}_0\cap \mathcal{W}(p)\,.
\end{align}
As a vertex operator algebra, \(\mathcal{M}(p)\) is generated by the Virasoro field \(T(z)\) and the weight \(2p-1\) Virasoro primary field
\(W^0(z)\), hence the name ``singlet algebra''. As a Virasoro module, \(\mathcal{M}(p)\) decomposes into an infinite direct 
sum of the same irreducible Virasoro modules as \(\mathcal{W}(p)\):
\begin{align}
  \mathcal{M}(p)=\bigoplus_{n\geq 0}L(h_{\alpha_{2n+1,1}},c_p)\,.
\end{align}

In order to understand the representation theories of \(\mathcal{M}(p)\) and \(\mathcal{W}(p)\), we need to refine our understanding of
the screening operators somewhat. The main difficulty arises from the fact that the factor \(z^{\alpha_-a_0}\) in \(Q_-(z)\) 
(see formula \eqref{eq:screeningop}) will give rise to non-trivial monodromies when applied to general \(\mathcal{F}_{\alpha_{r,s}}\).
This problem can be circumvented by considering products of the \(Q_-(z)\):
\begin{align}
  \int_{[\Gamma_s]}Q_-(z_1)\cdots Q_-(z_s) \: \dz z_1\cdots\dz z_s \colon  
  \mathcal{F}_{\alpha_{r,s}}\rightarrow \mathcal{F}_{\alpha_{r,-s}}\,.
\end{align}
The cycle \([\Gamma_s]\) over which this integral is taken is uniquely determined (up to normalisation) by requiring
that the above map be non-trivial (see \cite{NT} for details). This map will henceforth be denoted by 
\(Q_-^{[s]}\). The maps \(Q_-^{[s]}\), \(1\leq s\leq p-1\),
commute with \(\mathcal{M}(p)\) and \(\mathcal{W}(p)\) and therefore define \(\mathcal{M}(p)\)- and \(\mathcal{W}(p)\)-module homomorphisms.

The  
modules $\mathcal{F}_{\alpha_{r,s}}$ with $r \in \ZZ$ and $1 \leq s \leq p-1$ may be organised into Felder complexes under the action of \(Q_-^{[s]}\):
\begin{equation} \label{eq:MFeldercomplex}
\cdots \longrightarrow \mathcal{F}_{\alpha_{r,s}} \xrightarrow{Q_-^{[s]}} \mathcal{F}_{\alpha_{r+1,p-s}} \xrightarrow{Q_-^{[p-s]}} \mathcal{F}_{\alpha_{r+2,s}} \xrightarrow{Q_-^{[s]}} \mathcal{F}_{\alpha_{r+3,p-s}} \longrightarrow \cdots\,.
\end{equation}
Here we have made use of the periodicity \(\alpha_{r,s}=\alpha_{r+1,s+p}\). These sequences are exact, meaning that 
\begin{align}\label{eq:relkerim}
  \text{im} (Q_-^{[p-s]} \colon 
  \mathcal{F}_{\alpha_{r-1,p-s}}\rightarrow\mathcal{F}_{\alpha_{r,s}})=
  \ker (Q_-^{[s]} \colon 
  \mathcal{F}_{\alpha_{r,s}}\rightarrow\mathcal{F}_{\alpha_{r+1,p-s}})\,,
\end{align}
and also extend to \(\mathcal{V}(p)\) modules:
\begin{equation} \label{eq:WFeldercomplex}
\cdots \longrightarrow \mathcal{V}_{[\alpha_{1,s}]} \xrightarrow{Q_-^{[s]}} \mathcal{V}_{[\alpha_{2,p-s}]}\xrightarrow{Q_-^{[p-s]}} \mathcal{V}_{[\alpha_{1,s}]} \xrightarrow{Q_-^{[s]}} \mathcal{V}_{[\alpha_{2,p-s}]} \longrightarrow \cdots\,.
\end{equation}

There are \(2p\) isomorphism classes of simple \(\mathcal{W}(p)\)-modules \(W_{r,s}\), \(r=1,2\) and \(1\leq s\leq p\).
They can be simply characterised 
in terms of the exact sequences \eqref{eq:WFeldercomplex}:
\begin{equation}
W_{r,s} = 
\begin{cases}
\ker \brac{Q_-^{[s]} \colon \mathcal{V}_{[\alpha_{r,s}]} \longrightarrow \mathcal{V}_{[\alpha_{3-r,p-s}]}} & \text{if $1 \leq s < p$,} \\
\mathcal{V}_{[\alpha_{r,p}]} & \text{if $s = p$.}
\end{cases}
\end{equation}
We therefore obtain short exact sequences for $1\leq s < p$:
\begin{equation}
0 \longrightarrow W_{r,s} \longrightarrow \mathcal{V}_{[\alpha_{r,s}]} \longrightarrow W_{3-r,p-s}  \longrightarrow 0.
\end{equation}
The highest conformal weight of \(W_{r,s}\) is \(h_{\alpha_{r,s}}\). For
\(r=1\), the
``space of ground states'' --- the space annihilated by all positive modes of
\(\mathcal{W}(p)\) --- is one-dimensional and
for \(r=2\), it is two-dimensional. 
As Virasoro modules, the \(W_{r,s}\) decompose into an infinite direct sum of simple Virasoro modules:
\begin{align}
    W_{r,s}&=\bigoplus_{n\geq 0}(2n+r)L(h_{\alpha_{2n+r,s}},c_p) 
    \qquad \text{($r=1,2$, $1 \leq s \leq p$).}
\end{align}
The characters of the simple Virasoro modules that constitute the \(W_{r,s}\) are
well-known, leading to explicit expressions for the characters of the latter modules:
\begin{align}
  \ch{W_{r,s}}&=\frac{1}{\eta(q)}\sum_{n\geq 0}
  (2n+r) \brac{q^{((2n+r)p-s)^2 / 4p}-q^{((2n+r)p+s)^2 / 4p}}\,.
\end{align}

The representation theory of the singlet algebra $\mathcal{M}(p)$ is slightly more complicated because there are uncountably many isomorphism classes
of simple modules. For \(\lambda\in\mathbb{C}\setminus L^\vee\), the 
Fock space \(\mathcal{F}_\lambda\) is simple as a 
Virasoro module and therefore also as an \(\mathcal{M}(p)\)-module. 
For \(\lambda\in L^\vee\), the $\mathcal{F}_{\lambda}$ 
are not always semisimple as \(\mathcal{M}(p)\)-modules, but they may again be used to
characterise the simple (highest weight) $\mathcal{M}(p)$-modules:
\begin{equation}\label{eq:MinF}
M_{r,s} = 
\begin{cases}
\ker \brac{Q_-^{[s]} \colon \mathcal{F}_{\alpha_{r,s}} \longrightarrow \mathcal{F}_{\alpha_{r+1,p-s}}} & \text{if $r \geq 1$ and $1 \leq s < p$,} \\
\text{im} \brac{Q_-^{[p-s]} \colon \mathcal{F}_{\alpha_{r-1,p-s}} \longrightarrow \mathcal{F}_{\alpha_{r,s}}} & \text{if $r \leq 1$ and $1 \leq s < p$,} \\
\mathcal{F}_{\alpha_{r,p}} & \text{if $s = p$.}
\end{cases}
\end{equation}
Note that the equality \eqref{eq:relkerim} accounts for the case $r=1$ in this characterisation. 
This time, the short exact sequences (for $1\leq s < p$) take the form
\begin{equation}
0 \longrightarrow M_{r,s} \longrightarrow \mathcal{F}_{[\alpha_{r,s}]} \longrightarrow M_{r+1,p-s}  \longrightarrow 0.
\end{equation}
Again, the highest weight of $M_{r,s}$ is \(h_{\alpha_{r,s}}\). 
As Virasoro modules, these \(\mathcal{M}(p)\)-modules decompose as
\begin{equation}
\begin{split}
M_{r,s}&=\bigoplus_{k\geq0}\mathcal{L}(h_{\alpha_{r+2k,s}}) \qquad \text{($r \geq 1$, $1 \leq s \leq p$),} \\
M_{r+1,p-s}&=\bigoplus_{k\geq0}\mathcal{L}(h_{\alpha_{r-2k,s}}) \qquad \text{($r \leq 0$, $1 \leq s \leq p$).}
\end{split}
\end{equation}
We remark 
that for \(r\geq 1\) and \(1\leq s\leq p\), the \(\mathcal{M}(p)\)-modules \(M_{r,s}\) 
and \(M_{2-r,s}\) are
isomorphic as Virasoro modules but not as \(\mathcal{M}(p)\)-modules. 
The simple \(\mathcal{W}(p)\)-modules are semisimple as \(\mathcal{M}(p)\)-modules:
\begin{equation}\label{eq:tripletdecomp}
\begin{split}
  W_{r,s}&=\bigoplus_{k\in\ZZ} \ M_{2k+r,s}\,.
\end{split}
\end{equation}
For \(r\geq 1\) and \(1\leq s\leq p-1\), the characters of the singlet modules are given by
\begin{equation} \label{eq:singletchars}
\begin{split}
  \ch{M_{r,s}}=\ch{M_{2-r,p-s}}&=\frac{1}{\eta(q)}\sum_{n\geq 0}
  (q^{((r+2n)p-s)^2/4p}-q^{((r+2n)p+s)^2/4p})\\
  &= \sum_{n\geq 0}\Bigl(\ch{\mathcal{F}_{\alpha_{r-2n-1,p-s}}}-
  \ch{\mathcal{F}_{\alpha_{r-2n-2,s}}}\Bigr)\,.
\end{split}
\end{equation}

\section{Coset constructions for $\cM(p)$ and $\cW(p)$} \label{sec:Commutants}

In this section, we will construct a family of free field vertex algebras $\mathcal B_p$ inside a rank two lattice algebra. These vertex algebras will be compared with the W-algebras $W_n^{(2)}$ introduced by Feigin and Semikhatov and the singlet algebras $\cM(p)$ and triplet algebras $\cW(p)$ will be characterized as commutant subalgebras. 

\subsection{The Feigin-Semikhatov algebras $W_n^{(2)}$}

In \cite{FS}, Feigin and Semikhatov introduce a family of $W$-algebras associated to the affine Lie superalgebra 
$\AKMSA{sl}{n}{1}$. They provide two constructions of
these algebras. The first is as the intersection of kernels of 
screening charges inside a certain lattice vertex algebra, where the screening charges are associated to a simple root 
system of $\SLSA{sl}{n}{1}$. The second is as a subalgebra of the tensor product of the universal affine vertex super algebra of $\AKMSA{sl}{n}{1}$ at level $k'=1/(k+n-1)+1-n$ with a rank one lattice vertex algebra $V$.
They use these constructions to compute the first few leading terms of the operator product algebra, but for general $n$, a complete characterization of the algebra is unknown.
We will use the second 
to define a universal version of this algebra which we shall refer to as the $\FS{n}$-algebra of level $k$.

Feigin and Semikhatov consider two fields 
$\widetilde{E}(z)$, $\widetilde{F}(z)$ in this tensor product theory and define the $\widetilde{W}_n^{(2)}$-algebra of level $k$ as the algebra 
generated by these two fields under iterated operator products. A set of strong generators\footnote{We recall that strongly generated means that every field of the algebra is a normally-ordered polynomial in the strong generators and their derivatives and that being freely generated means that there are no relations between generators --- there is no non-trivial linear combination of 
normally-ordered products of the generators and their derivatives which vanishes.} can then be constructed step by step
by simply adding those fields in a given operator product expansion that cannot
be expressed in terms of the previous fields. At each step, 
one has to ensure that there are no dependencies, that is, that
the set of generators obtained thus far is actually strong.
It is clear that this procedure endows the resulting vertex algebra with 
a countable ordered set of strong generators. Call this set $S$ and the generators $\{\widetilde{X}^{(i)}(z)\}_{i\in S}$. It contains a Virasoro field $\widetilde{T}(z)$ for which $\widetilde{E}(z)$ and $\widetilde{F}(z)$ are both primary fields of conformal dimension $n/2$.
Our definition of the $\FS{n}$-algebra is then as follows:
\begin{defn}
Let $\{ X^{(i)}(z)\}_{i\in S}$ be a
set of fields whose operator product expansions have singular terms which are 
identical to those 
of the $\widetilde{X}^{(i)}(z)$ (but omitting the tildes).
Let $A$ be the vector space spanned
  by the vacuum vector $|0\rangle$ and the ordered PBW-like basis of non-zero elements of the form
\[  X^{(i_1)}_{j_1}\dots  X^{(i_m)}_{j_m} |0\rangle,   \]
where $j_1\leq j_2\leq \dots\leq j_m < 0$ and if $j_{r}=j_{r+1}$ then $i_r
\leq i_{r+1}$.
Then, $T(z)$ is a Virasoro field and, by the reconstruction theorem, the vector
space $A$ can be given the structure of a vertex operator algebra, 
the Feigin-Semikhatov algebra $\FS{n}$ of level $k$.   
\end{defn}

\noindent By construction, 
the $\widetilde{W}_n^{(2)}$-algebra of level $k$ is a quotient of $\FS{n}$ of level $k$ because there may exist relations captured in the regular terms of the operator product expansions of the strong generators $\widetilde{X}^{(i)}(z)$.  Our definition therefore gives a universal version of the algebras constructed by Feigin and Semikhatov.

We remark that Feigin and Semikhatov originally proposed the notation, suggesting that the ``$(2)$'' of $W_n^{(2)}$ indicates 
that this algebra behaves similarly to $\SLA{sl}{2}$. They implicitly assume, except for $n=1$, that the $\FS{n}$ algebra of level $k$ is a quantum Hamiltonian 
reduction corresponding to a certain non-principal embedding of $\SLA{sl}{2}$ into the universal 
affine vertex algebra $\AKMA{sl}{n}_k$. 
Since this is an unproven statement, let us state it as a conjecture.
Let $\varphi_n:\SLA{sl}{2}\rightarrow \SLA{sl}{n}$ be an embedding of $\SLA{sl}{2}$ in $\SLA{sl}{n}$, 
such that $\SLA{sl}{n}$ decomposes into $\SLA{sl}{2}$ modules as
\begin{equation}
\lambda_{n-1}\oplus \lambda_{n-1}\oplus \bigoplus_{i=1}^{n-1} \lambda_{2i-1},
\end{equation}
where $\lambda_m$ denotes the $m$-dimensional irreducible representation of $\SLA{sl}{2}$.
Let $\mathcal W^{(2)}_n$ of level $k$ be the quantum Hamiltonian reduction of the affine vertex algebra of $\SLA{sl}{n}$
of level $k$ corresponding to the embedding $\varphi_n$.
These types of quantum Hamiltonian reduction can be found for example in \cite{KRW}.
\begin{conj}\label{conj:qh}
For $n>1$, the two algebras $\mathcal W^{(2)}_n$ and
$W_n^{(2)}$, both of level $k$, are isomorphic.
\end{conj}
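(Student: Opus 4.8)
The plan is to recognise $\mathcal W^{(2)}_n$ as the subregular $W$-algebra of $\SLA{sl}{n}$ and then to match its free-field (screening) realisation with the one underlying Feigin and Semikhatov's construction of $W_n^{(2)}$. First I would identify the nilpotent orbit attached to $\varphi_n$. Decomposing the adjoint action, one has $\SLA{sl}{n} \cong \lambda_{n-1} \oplus \lambda_{n-1} \oplus \bigoplus_{i=1}^{n-1} \lambda_{2i-1}$ exactly when the associated nilpotent $f$ corresponds to the partition $(n-1,1)$, that is, the subregular nilpotent; the dimension check $2(n-1) + (n-1)^2 = n^2-1$ confirms this. By the structure theorem for affine $W$-algebras obtained by quantum Drinfeld--Sokolov reduction (see \cite{KRW}), $\mathcal W^{(2)}_n$ is then strongly and freely generated by fields indexed by a basis of the centraliser $\alg{g}^f$, the field attached to a lowest-weight vector of a $\lambda_{2j+1}$-summand having conformal weight $j+1$. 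Reading off weights from the decomposition yields generators of weights $1, 2, \ldots, n-1$ together with two generators of weight $n/2$ --- precisely the conformal weights of the strong generators $\{ X^{(i)}(z) \}_{i \in S}$ of $\FS{n}$, with the two weight-$n/2$ fields to be identified with $E(z)$ and $F(z)$.

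Next I would produce a free-field realisation of $\mathcal W^{(2)}_n$. The idea is to start from a Wakimoto-type free-field realisation of $\AKMA{sl}{n}_k$ adapted to the parabolic subalgebra determined by $f$, and to pass it through the Drinfeld--Sokolov BRST complex. For the subregular nilpotent and a good grading the reduction is mild: for generic $k$ I expect the BRST cohomology to be concentrated in degree zero, yielding an embedding of $\mathcal W^{(2)}_n$ into a tensor product of $\beta\gamma$-systems with a rank-$(n-1)$ Heisenberg (equivalently lattice) vertex algebra, realised as the joint kernel of a family of screening operators. This is exactly the screening-operator method for $W$-algebras developed by Genra, and it produces the structure constants of the $X^{(i)}(z)$ as explicit functions of $k$ and $n$.

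The heart of the proof is to match this realisation with Feigin and Semikhatov's. Their first construction presents $W_n^{(2)}$ as the intersection of kernels of screenings attached to a simple root system of $\SLSA{sl}{n}{1}$ inside a lattice vertex algebra \cite{FS}. I would show that, after a linear redefinition of the Heisenberg generators (fixing in particular the background charge as a function of $k$ and $n$) and a rescaling of the $\beta\gamma$-systems, the ambient free-field algebra and every screening charge of the Drinfeld--Sokolov realisation coincide with those of the $\SLSA{sl}{n}{1}$ realisation. Under the Feigin--Frenkel/Genra dictionary the even simple-root screenings of $\SLSA{sl}{n}{1}$ correspond to the screenings of the subregular Drinfeld--Sokolov reduction, while the single odd simple-root screening accounts for the remaining one. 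Once the ambient algebras and all screening charges are identified, the two joint kernels are literally equal; since both $\mathcal W^{(2)}_n$ and $\FS{n}$ are freely generated by fields of the same weights, the induced map $\mathcal W^{(2)}_n \to W_n^{(2)}$ is then an isomorphism.

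The main obstacle is this last matching of screening operators, and in particular pinning down the normalisations and the background charge of the Heisenberg factor so that the odd screening of $\SLSA{sl}{n}{1}$ is correctly reproduced; getting these data right uniformly in $n$ is the delicate point. A secondary obstacle is the uniform vanishing of the higher Drinfeld--Sokolov cohomology and the freeness of the generating set, which are standard for generic $k$ but require care. An alternative, purely computational route would be to verify directly that the reduction's generators satisfy the operator product expansions computed by Feigin and Semikhatov; this is feasible for small $n$ (and is essentially how the cases $p \leqslant 5$ are checked in the body of this paper) but becomes a proof for all $n$ only once a closed form for the structure constants is available --- which is precisely what the free-field realisation supplies.
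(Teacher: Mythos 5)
First, be clear about what you are comparing against: the paper does not prove this statement. It is presented explicitly as a conjecture, attributed implicitly to \cite{FS}, and the authors stress that none of their results depend on it. The only support they offer is exactly the content of your first paragraph --- that the quantum Hamiltonian reduction attached to $\varphi_n$ is strongly and freely generated by two bosonic fields of dimension $n/2$ together with fields of dimensions $1,2,\ldots,n-1$, matching the generators of $\FS{n}$ --- plus the critical-level commutant realization of \cite{CGL}. Your identification of $\varphi_n$ with the subregular nilpotent (partition $(n-1,1)$; your dimension count $2(n-1)+(n-1)^2=n^2-1$ is correct) and the conformal-weight bookkeeping therefore reproduce the paper's stated evidence rather than go beyond it.

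Your second and third paragraphs sketch a genuine strategy, but as written they are a research program, not a proof, and you concede the decisive steps yourself. Concretely: (i) the claimed coincidence, after rescalings and adjustment of background charges, of the ambient free-field algebra and of \emph{every} screening charge of the subregular Drinfeld--Sokolov reduction with those of the $\AKMSA{sl}{n}{1}$ simple-root screenings of \cite{FS} is asserted, not established, and this is where essentially all of the content lies; (ii) even granting it, you would have identified the reduction with the kernel-of-screenings algebra, which by the paper's own definitions is only known to be a \emph{quotient} of the universal $\FS{n}$ --- upgrading this to an isomorphism with $\FS{n}$ requires knowing that the kernel is freely generated, i.e.\ full control of the operator product algebra, which is unavailable for $n\geq 4$ and is precisely the obstruction the paper names; (iii) your cohomology-vanishing and free-generation claims are standard only for generic $k$, whereas the conjecture is stated for arbitrary level $k$ (and the level relevant to this paper, $k=-n^2/(n+1)$, is not generic), so a separate flatness or polynomial-structure-constant argument over $\CC[k]$ would be needed to transfer a generic-level isomorphism. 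The screening-matching route you propose is the natural one, but within this paper the statement remains open, and your proposal, with its acknowledged gaps, does not close it.
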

This conjecture is implicitely stated in \cite{FS}. We remark that our results do not depend on this conjecture. 
Nonetheless, let us state some observations in its favour. 
The quantum Hamiltonian reduction gives rise to a vertex algebra that is generated 
by two bosonic fields $E$ and $F$ of conformal dimension $n/2$.  Moreover, this reduction is strongly and freely generated 
as a vertex algebra by two bosonic fields of dimension $n/2$ and one each of dimensions $1,2,\ldots,n-1$.

There is another algebra whose operator product algebra coincides with the known part of the operator product expansions of the $\FS{n}$ algebra at the critical level $k=-n$.
This algebra is realised as a commutant associated to the superalgebra $\AKMSA{pgl}{n}{n}$ at critical level \cite{CGL}.
The resulting commutant algebra was also found to be strongly and freely generated by $n+1$ fields, which we view as further evidence for Conjecture~\ref{conj:qh}.

For small $n$, the $\FS{n}$ algebras reduce to the $\beta\gamma$-ghosts for $n=1$, $\AKMA{sl}{2}_k$ in its universal form for $n=2$, and the Bershadsky-Polyakov algebra for $n=3$.  This last algebra is indeed known to be a quantum Hamiltonian reduction of $\AKMA{sl}{3}_k$ \cite{B,P} and its usual notation, $\FS{3}$, gives a rather more mundane explanation for
the notation chosen for the $\FS{n}$ algebras in general.  Recall that at non-generic levels, the universal vertex algebra associated to $\AKMA{sl}{n}_k$ ceases to be simple and one usually prefers to consider the simple quotient. Because of this, the algebras defined through hamiltonian reduction will not be simple for all levels and one should distinguish between them and their simple quotients.  

We quote what has been computed for the operator product expansions of $W^{(2)}_n$ at the level $k=-n^2/(n+1)$ that is of interest for this work.  These expansions are common to all non-trivial quotients of $\FS{n}$.  Because we will mostly concern ourselves with the connection to the singlet and triplet algebras $\cM(p)$ and $\cW(p)$, we will set $n$ throughout to $p-1$ for convenience.
\begin{prop}[Feigin-Semikhatov \cite{FS}] \label{prop:FSope}
Let $k=-(p-1)^2/p$, let $L$ be a Virasoro field of central charge $c=2-6(p-1)^2/p$, and let $H$, $E$ and $F$ be Virasoro primary fields
of conformal dimensions $1$, $(p-1)/2$ and $(p-1)/2$, respectively.  Then, the known part of the $\FS{p-1}$ operator product algebra at level $k$ includes
\begin{gather*}
H(z)H(w) \sim \frac{-2/p}{(z-w)^2} ,\qquad 
H(z)E(w) \sim \frac{E(w)}{(z-w)},\qquad
H(z)F(w) \sim  -\frac{F(w)}{(z-w)}, \\
E(z)E(w) \sim F(z)F(w) \sim 0, \\
\begin{split}
E(z)F(w) &= \frac{(-1)^{p}}{p^{p-1}}\frac{(2p-2)!}{(p-1)!}\frac{1}{(z-w)^{p-1}}+ 
 \frac{1}{2}\frac{(-1)^{p-1}}{p^{p-2}}\frac{(2p-2)!}{(p-1)!}\frac{H(w)}{(z-w)^{p-2}}+ \\
&\qquad \frac{1}{2}\frac{(-1)^{p}}{p^{p-3}}\frac{(2p-4)!}{(p-2)!}\frac{(p-2)\normord{H(w)H(w)}-\frac{2p-3}{p}\del H(w)-\frac{2}{p}L(w)}{(z-w)^{p-3}}+ \\
&\qquad \frac{(-1)^{p}}{p^{p-3}}\frac{(2p-4)!}{(p-1)!}\frac{1}{(z-w)^{p-4}}\Bigl(W(w)-\frac{(p-1)}{2p}\del L'(w)+\frac{(p-1)}{2}\normord{H(w)L'(w)}+\\
&\qquad (2p-3)(p-1)\bigl(-\frac{p}{24}\normord{H(w)H(w)H(w)}+\frac{1}{4}\normord{\del H(w)H(w)}-\frac{1}{6p}\del^2 H(w)\bigl)\Bigr)+\cdots,
\end{split}
\end{gather*}
where $L'=L+p\normord{HH}/4$ and $W$ is a dimension $3$ Virasoro primary.
The dots denote terms in which the exponent of $z-w$ is greater than $4-p$.
\end{prop}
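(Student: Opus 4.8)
Following Feigin and Semikhatov, the plan is to read off all of these operator product expansions from their second (free-field) construction of $\FS{p-1}$, in which the algebra is realised inside $\AKMSA{sl}{p-1}{1}_{k'} \otimes V_L$. A short computation with the stated level map $k'=1/(k+n-1)+1-n$ specialises the dual level to $k' = -2(p-1)$ (matching the level quoted in the Introduction), with $V_L$ a rank-one lattice vertex algebra of the appropriate, possibly indefinite, signature. In this realisation $E$ and $F$ are the $\AKMA{sl}{p-1}$-invariant composites of the odd currents of $\AKMSA{sl}{p-1}{1}$ --- schematically, the totally antisymmetric products of all $p-1$ raising (respectively lowering) odd currents, which are $\AKMA{sl}{p-1}$-singlets of affine dimension $p-1$ --- dressed by a vertex operator $e^{\pm \gamma \varphi}$ that both lowers the conformal weight to $(p-1)/2$ and enforces commutation with the remaining currents. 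The field $H$ is the suitably normalised Heisenberg current conjugate to the lattice direction $\varphi$, $L$ is the coset Virasoro field (of central charge $c = 2 - 6(p-1)^2/p = c_p + 1$), and $W$ is the lowest new Virasoro primary, of dimension $3$. Every expansion in the proposition is then computable by Wick's theorem.

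With the realisation fixed, the $H$-expansions and the vanishing statements follow from the grading alone. By construction $H$ measures the lattice charge, under which $E$ and $F$ carry charges $+1$ and $-1$; fixing the normalisation by the level gives $H(z)H(w) \sim (-2/p)(z-w)^{-2}$ and the simple poles $H(z)E(w) \sim E(w)/(z-w)$, $H(z)F(w) \sim -F(w)/(z-w)$. For $E(z)E(w)$ and $F(z)F(w)$ the two vertex operators carry equal charge, so their mutual contraction contributes only a non-negative integer power of $(z-w)$; since like-charged products of the odd currents have no singular self-contractions in $\AKMSA{sl}{p-1}{1}$, the whole expansion is regular and $E(z)E(w) \sim F(z)F(w) \sim 0$.

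The content of the proposition is the $E(z)F(w)$ expansion. Now the two vertex operators carry opposite charge, so their contraction supplies the leading pole together with its descendants, while the remaining singular terms come from contracting the $p-1$ raising currents in $E$ against the $p-1$ lowering currents in $F$ and against the lattice oscillators. The computation proceeds by Taylor-expanding both fields about $w$, applying Wick's theorem, normal-ordering, and collecting by the power of $(z-w)$ from $(z-w)^{-(p-1)}$ through $(z-w)^{4-p}$. At each order the resulting composite must be rewritten in the chosen strong generators: the identity at order $-(p-1)$; $H$ at order $-(p-2)$; the dimension-two fields $\normord{HH}$, $\del H$ and $L$ at order $-(p-3)$; and the dimension-three fields $W$, $\del L'$, $\normord{HL'}$, $\normord{HHH}$, $\normord{\del H\, H}$ and $\del^2 H$ at order $4-p$, with $L' = L + p\normord{HH}/4$. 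The closed-form coefficients, such as the leading $(-1)^p p^{-(p-1)} (2p-2)!/(p-1)!$, emerge from the number of contraction patterns of the $p-1$ currents together with the lattice two-point function at level $k' = -2(p-1)$.

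The main obstacle is this $E(z)F(w)$ computation at its two subleading singular orders. First, the combinatorics must be controlled exactly: each coefficient is a product of factorials and powers of $p$ obtained by summing over all contraction patterns of the $p-1$ odd currents dressed by the lattice oscillators, and obtaining these in closed form --- rather than merely term by term in $p$ --- is the delicate step. Second, the reorganisation at order $4-p$ presupposes an independent description of the dimension-three primary $W$ as a normal-ordered polynomial in $H$, $L$ and their derivatives, and a check that $W$ is a genuinely new strong generator rather than a composite of the lower fields; only then is the decomposition into $W$, $\del L'$, $\normord{HL'}$, $\normord{HHH}$, $\normord{\del H\, H}$ and $\del^2 H$ well defined. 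It is this practical ceiling on explicit free-field computation that limits the result to the ``known part'' of the expansion, the dots absorbing all contributions of $(z-w)$-exponent greater than $4-p$.
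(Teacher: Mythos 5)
You have targeted the right source: the paper itself offers no proof of this proposition --- it is quoted verbatim from Feigin--Semikhatov \cite{FS} (``We quote what has been computed\dots''), and the expansions are simply those common to all non-trivial quotients of $\FS{p-1}$. Your plan of recomputing within the second realisation inside $\AKMSA{sl}{p-1}{1}_{k'} \otimes V_L$ is the correct one, and your level arithmetic $k' = 1/(k+n-1)+1-n = -2(p-1)$ checks out against the Introduction. But what you have written is a programme, not a proof. The entire quantitative content of the proposition is the set of closed-form coefficients in $E(z)F(w)$ --- the leading $(-1)^p p^{-(p-1)}(2p-2)!/(p-1)!$, the precise mix $(p-2)\normord{HH} - \tfrac{2p-3}{p}\del H - \tfrac{2}{p}L$ at order $p-3$, and the dimension-three combination at order $4-p$ --- and you derive none of them: you assert they ``emerge from the number of contraction patterns'' and then concede that obtaining them in closed form is ``the delicate step,'' leaving it undone. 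Nothing beyond the charge-grading statements (the $H$-expansions and the vanishing OPEs) is actually established, and those were never in doubt.

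Two further points. Your treatment of $W$ is internally inconsistent: you demand ``an independent description of $W$ as a normal-ordered polynomial in $H$, $L$ and their derivatives'' while simultaneously requiring that $W$ be ``a genuinely new strong generator rather than a composite'' --- these cannot both hold. In \cite{FS}, and in this paper's universal definition of $\FS{n}$, $W$ is \emph{defined} as the remainder at order $(z-w)^{4-p}$ after subtracting the displayed composites in $H$ and $L' = L + p\normord{HH}/4$; the coefficients of those subtractions are fixed precisely by demanding that the remainder be a Virasoro primary of dimension $3$, which is the statement requiring verification --- so the ``well-definedness'' worry dissolves once the expansion is read as defining $W$. Separately, your claim that $E(z)E(w) \sim 0$ follows ``from the grading alone'' is incomplete: on an indefinite lattice, the mutual contraction of two like-charged vertex operators contributes a factor $(z-w)^{\langle \gamma, \gamma \rangle}$, which is non-negative only if the dressing vector satisfies $\langle \gamma, \gamma \rangle \geq 0$; compare the paper's own $\mathcal B_p$, where regularity of $e(z)e(w)$ rests on $-\alpha_+(\beta_+ - \beta_-)/2$ being a \emph{null} vector, not on charge conservation. (It is telling that the one analogous computation this paper does vouch for, Proposition \ref{prop:Bope}, is carried out in the explicit rank-two lattice realisation, where $e$ and $f$ are concrete vertex operators and the Wick calculus, though ``tedious,'' is mechanical --- a tractability your proposed route to $\FS{p-1}$ itself does not yet enjoy.)
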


\noindent We remark that the dimension three field $W$ only appears in the singular part of these operator product expansions when $p\geq 5$. In the case $p=5$, Feigin and Semikhatov also computed all the operator product expansions involving $W$. For a generic value of $k$, the resulting expressions are very long, but for $k=-(p-1)^2/p=-16/5$, they simplify considerably and are quoted for future reference.
 
\begin{prop}[Feigin-Semikhatov \cite{FS}] \label{prop:Wope}
When 
$p=5$ and $k=-16/5$, the operator product expansions for the dimension three field $W$ are
\begin{align*}
W(z)H(w) &\sim 0, \\
W(z)E(w) &\sim \frac{2}{3}\frac{\normord{H(w)\del E(w)}-2\normord{\del H(w)E(w)}+\frac{1}{5}\del^2E(w)-\normord{L(w)E(w)}}{(z-w)},\\
W(z)F(w) &\sim \frac{2}{3}\frac{\normord{H(w)\del F(w)}-2\normord{\del H(w)F(w)}-\frac{1}{5}\del^2F(w)+\normord{L(w)F(w)}}{(z-w)}, \\
W(z)W(w) &\sim \frac{16}{5} \frac{\Lambda(w)}{(z-w)^2} + \frac{8}{5} \frac{\del \Lambda(w)}{(z-w)}.
\end{align*}
Here, $\Lambda$ is a dimension $4$ Virasoro primary.  Its operator product expansion with $W$ involves descendants of $W$ and a Virasoro primary of dimension $5$ (see \cite[App. A.4.2]{FS}).
\end{prop}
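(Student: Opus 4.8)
The plan is to derive these expansions as algebraic consequences of the relations already recorded in Proposition~\ref{prop:FSope}, specialised to $p=5$ (so that $n=p-1=4$) and $k=-16/5$. The essential point is that $W$ is not an independent generator: by the last line of the $E(z)F(w)$ expansion in Proposition~\ref{prop:FSope}, for $p=5$ the field $W$ is isolated in the simple-pole ($(z-w)^{-(p-4)}=(z-w)^{-1}$) term, so that $W$ equals $E_{(0)}F$ up to a fixed normally-ordered differential polynomial in $H$ and $L$. Since the zero mode $E_{(0)}$ acts as a derivation with respect to every $n$-th product of the vertex algebra, one has
\[
(E_{(0)}F)_{(m)}X=E_{(0)}\bigl(F_{(m)}X\bigr)-F_{(m)}\bigl(E_{(0)}X\bigr),
\]
for every generator $X$ and every $m\geq 0$. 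This reduces each operator product $W(z)X(w)$ to the operator products of $E$ and $F$ with $X$, all of which are listed in Proposition~\ref{prop:FSope} (using in particular $E(z)E(w)\sim F(z)F(w)\sim 0$), together with the OPEs of $X$ with the composite correction terms, which are themselves built from the $H$ and $L$ data by Wick's theorem.

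For $X\in\{H,E,F\}$ I would evaluate the right-hand side of the displayed identity mode by mode and then re-normal-order using quasi-associativity. Since $W=E_{(0)}F$ carries $H$-charge $0$, the simple pole of $W(z)H(w)$ vanishes immediately, and one checks that the remaining higher-order poles also cancel, giving $W(z)H(w)\sim 0$. For $W(z)E(w)$ and $W(z)F(w)$ the computation should collapse, after the rearrangements, to the single displayed simple-pole term; the opposite signs carried by the $\del^{2}$ and normally-ordered-$L$ contributions in the two cases are forced by the conjugation symmetry of the algebra, which exchanges $E$ with $F$ and sends $H\mapsto -H$ (consistent with $H(z)E(w)\sim E/(z-w)$ versus $H(z)F(w)\sim -F/(z-w)$).

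The self-coupling $W(z)W(w)$ is handled by the same mechanism applied one level deeper, computing $(E_{(0)}F)_{(m)}(E_{(0)}F)$. The dimension four primary $\Lambda$ is then \emph{defined} as what remains of the second-order pole after projecting out all Virasoro descendants and all normally-ordered products of the lower generators. Conformal covariance fixes the first-order pole to be $\tfrac12\del\Lambda$ relative to $\Lambda$ (the universal coefficient for a quasi-primary exchanged between two identical fields of equal dimension), which is exactly the relation $\tfrac{8}{5}=\tfrac12\cdot\tfrac{16}{5}$; the real content is therefore that the leading pole is second order and equals a primary, with no independent field in the simple pole. I expect the sole obstacle to be computational rather than conceptual: the $W(z)W(w)$ calculation involves many Wick contractions and repeated normal-ordering, and it is only at the distinguished level $k=-16/5$ that the numerous a priori independent singular contributions cancel to leave the compact two-term answer. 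In practice this is precisely the $n=4$, $k=-16/5$ specialisation of the (much longer, generic-level) computation of \cite{FS}, which is why we quote the result rather than reproduce the algebra here.
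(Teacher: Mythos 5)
The paper offers no proof of this proposition at all: it is quoted verbatim from Feigin and Semikhatov's computations (see \cite[App.~A.4.2]{FS}, as the surrounding text of the paper makes explicit), and your proposal, after outlining a sound reduction strategy --- identifying $W$ inside the first-order pole of $E(z)F(w)$ for $p=5$ via Proposition~\ref{prop:FSope}, using the valid commutator identity $[E_{(0)},F_{(m)}]=(E_{(0)}F)_{(m)}$ to reduce each $W$-OPE to the known $E$, $F$, $H$, $L$ data, invoking the conjugation symmetry $E\leftrightarrow F$, $H\mapsto -H$, $W\mapsto -W$ for the relative signs, and correctly noting the universal descent coefficient $\tfrac{8}{5}=\tfrac{1}{2}\cdot\tfrac{16}{5}$ --- likewise ends by deferring the decisive algebra (the collapse of $W(z)E(w)$ and $W(z)F(w)$ to single simple poles and the vanishing of the order-$3$ through order-$6$ poles of $W(z)W(w)$ at $k=-16/5$) to \cite{FS}. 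In the operative sense you therefore take essentially the same route as the paper, namely citation of Feigin--Semikhatov, with your derivation sketch as a plausible but unexecuted bonus.
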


\subsection{The triplet algebra as a coset}

As in \secref{sec:WnM}, take $\alpha_+=\sqrt{2p}$ and $\alpha_-=-\sqrt{2/p}$. 
We consider the lattices
\begin{equation}
D_+=\mathbb Z\alpha_+\beta_+, \qquad D_-=\mathbb Z\alpha_+\beta_-, \qquad D=\mathbb Z \frac{\alpha_+}{2}(\beta_+ + \beta_-)+ \mathbb Z \frac{\alpha_+}{2}(\beta_+ - \beta_-)\,,
\end{equation}
where $\beta_+$ and $\beta_-$ form a basis for a two-dimensional vector space over $\mathbb R$ with bilinear form chosen such that $\beta_+$ has length squared $1$, $\beta_-$ has length squared $-1$ and $\beta_+$ is orthogonal to $\beta_-$.
We define corresponding fields $\beta_+(z)$ and $\beta_-(z)$ with operator product expansions
\begin{equation}
\beta_+(z)\beta_+(w) \sim -\beta_-(z)\beta_-(w) \sim \log (z-w)\,, \qquad \beta_+(z) \beta_-(w) \sim 0.
\end{equation}
The derivatives of these fields define a rank $2$ Heisenberg vertex operator algebra $M$.
We then assert, in the usual fashion, the existence of lattice vertex operator algebras $V_{D_+}$, $V_{D_-}$ and $V_D$ associated to the respective lattices $D_+$, $D_-$ and $D$.
We also introduce screening charges similar to those considered in \secref{sec:Screening}:  $Q_\pm =\oint dz \normord{e^{\alpha_\pm \beta_+(z)}}$.
It is the kernel of $Q_-$ that forms our initial focus of attention.

\begin{prop} \label{prop:DefFields}
Define the following fields of $V_D$:
\begin{equation*}
\begin{split}
T(z) &= \frac{1}{2}\normord{\del\beta_+(z)\del\beta_+(z)}-\frac{1}{2}\normord{\del\beta_-(z)\del\beta_-(z)}+\frac{\alpha_++\alpha_-}{2}\del^2\beta_+(z),\\
T'(z) &= \frac{1}{2}\normord{\del\beta_+(z)\del\beta_+(z)}+\frac{\alpha_++\alpha_-}{2}\del^2\beta_+(z),\\
W^+(z) &= Q_+W^0(z),\qquad W^0(z) = Q_+W^-(z),\qquad W^-(z) = \normord{e^{-\alpha_+\beta_+(z)}}\,,\\
e(z)  &= \normord{e^{-\alpha_+(\beta_+(z)-\beta_-(z))/2}},\quad h(z) = \alpha_-\del\beta_-(z),\quad 
f(z) = \frac{(-1)^p (p-1)!}{p^{p-1}}Q_+\normord{e^{-\alpha_+(\beta_+(z)+\beta_-(z))/2}}.
\end{split}
\end{equation*}
Each of these fields belongs to 
the kernel of $Q_-$, denoted by $\ker_{V_D}(Q_-)$.  
\end{prop}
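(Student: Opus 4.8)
The plan is to reduce the entire statement to residue computations for the screening current $S_-(z) = \normord{e^{\alpha_-\beta_+(z)}}$, whose zero mode is $Q_-$, and then to treat the three fields defined via the second screening $Q_+$ separately. The crucial preliminary observation is that on $V_D$ every relevant operator product is single-valued: each vector of $D$ has $\beta_+$-component an integer multiple of $\tfrac{1}{2}\alpha_+$, and since $\alpha_+ \alpha_- = \sqrt{2p}\cdot(-\sqrt{2/p}) = -2$ while $\tfrac{1}{2}\alpha_+^2 = p$, the exponents arising when $S_\pm$ is paired with any vertex operator of $V_D$ are integers. Hence $Q_\pm$ act as genuine residues, and a state lies in $\ker Q_-$ precisely when the coefficient of $(z-w)^{-1}$ in the relevant expansion is a total $w$-derivative (in particular, when it vanishes). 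I would record once the contraction formula $\normord{e^{a\beta_+(z)}}\normord{e^{b\beta_+(w)}} = (z-w)^{ab}\normord{e^{a\beta_+(z)}e^{b\beta_+(w)}}$, and note that $\beta_-$ has trivial operator product with $S_\pm$ and is a spectator throughout.

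With this in place the ``primary'' generators are immediate. For $h(z) = \alpha_-\del\beta_-(z)$ there is nothing to check, as it involves only $\beta_-$. For $T'$ one computes the conformal weight of $S_-$ to be $h_{\alpha_-} = \tfrac{\alpha_-}{2}(\alpha_- - \alpha_0) = -\tfrac{1}{2}\alpha_+\alpha_- = 1$, so $S_-$ is a weight-one Virasoro primary for $T'$; thus $Q_-$ commutes with the modes of $T'$ and annihilates the state $T'$, and the same holds for $T$ since the extra term $-\tfrac{1}{2}\normord{\del\beta_-\del\beta_-}$ is a $\beta_-$-spectator. For $W^-(z) = \normord{e^{-\alpha_+\beta_+(z)}}$, for $e(z)$, and for the base field $\normord{e^{-\alpha_+(\beta_++\beta_-)/2}}$ of $f$, the $\beta_+$-charges are $-\alpha_+$, $-\tfrac{1}{2}\alpha_+$ and $-\tfrac{1}{2}\alpha_+$, so the leading exponents with $S_-$ are $\alpha_-(-\alpha_+) = 2$ and $\alpha_-(-\tfrac{1}{2}\alpha_+) = 1$; each is a non-negative integer, no simple pole occurs, and $Q_-$ annihilates all three states.

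It remains to treat $W^0 = Q_+ W^-$, $W^+ = Q_+ W^0$ and $f \propto Q_+\normord{e^{-\alpha_+(\beta_++\beta_-)/2}}$. Here the key lemma is that $Q_+$ and $Q_-$ commute on $V_D$ (and on the module $V_{D+\alpha_-\beta_+}$ into which $Q_-$ maps). Granting this, $Q_- W^0 = Q_- Q_+ W^- = Q_+ Q_- W^- = 0$, and likewise $Q_- W^+ = 0$ and $Q_- f = 0$, since $W^-$ and $\normord{e^{-\alpha_+(\beta_++\beta_-)/2}}$ already lie in $\ker Q_-$. To prove the lemma I would write $[Q_+,Q_-]$ as $\oint \dd w\,\mathrm{Res}_{z=w}\, S_+(z)S_-(w)$; the operator product $S_+(z)S_-(w) = (z-w)^{-2}\normord{e^{\alpha_+\beta_+(z)}e^{\alpha_-\beta_+(w)}}$ has integer exponent, so the contour manipulation is legitimate, and expanding $\beta_+(z)$ about $w$ shows that the $(z-w)^{-1}$ coefficient is $\tfrac{\alpha_+}{\alpha_++\alpha_-}\del_w\normord{e^{(\alpha_++\alpha_-)\beta_+(w)}}$, a total derivative whose contour integral vanishes (here $\alpha_++\alpha_-\neq 0$ for $p>1$).

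The main obstacle is exactly this commutativity lemma: one must be certain the naive contour deformation is valid, which is why the integrality of all exponents on $V_D$ — a consequence of the specific lattice $D$ together with $\alpha_+\alpha_- = -2$ — is doing the real work, and everything else is routine operator-product bookkeeping. A secondary point I would state carefully is that checking the generating fields is enough for the later coset statements, since $\ker_{V_D}(Q_-)$ is a vertex subalgebra: $S_-$ is a weight-one field, so $Q_-$ is the zero mode of a weight-one current and acts as a derivation of all the products, whence its kernel is closed under normally ordered products and no separate argument for composite fields is required.
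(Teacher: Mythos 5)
Your proof is correct and follows essentially the same route as the paper's, which likewise disposes of $e$, $h$ and the base field of $f$ by checking that their operator products with $\normord{e^{\alpha_-\beta_+(z)}}$ are regular, handles $f$ (and implicitly the $W$-fields) via the commutativity of $Q_+$ and $Q_-$, and treats $T$ through $T'$ and the $\beta_-$-spectator $\normord{\del\beta_-\del\beta_-}$ --- the only difference is that the paper asserts $[Q_+,Q_-]=0$ and calls the $W$-fields ``clear'', whereas you supply the integrality-of-exponents justification for the contour manipulations and the explicit total-derivative computation, which is a worthwhile filling-in of detail rather than a different argument. One small caution: your framing claim that a state lies in $\ker Q_-$ \emph{precisely} when the residue is a total $w$-derivative is false in that generality (for a single state, a residue field $\del B$ gives $Q_-a = \del B(0)\ket{0} \neq 0$ unless $B$ is constant; the total-derivative criterion is only valid under the outer $\oint \dd w$ in the commutator $[Q_+,Q_-]$), but since every step you actually take uses either a vanishing residue or the commutator, nothing in the proof depends on it.
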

\begin{proof}
This is clear for $W^+$, $W^0$ and $W^-$, and the operator product expansion of $\normord{e^{\alpha_-\beta_+(z)}}$ with $e$, $h$ and $\normord{e^{-\alpha_+(\beta_++\beta_-)/2}}$ is easily checked to be regular. Thus, $e,h \in \ker_{V_D}(Q_-)$ too.
The statement also follows for $f$, since $Q_-$ and $Q_+$ commute. Finally, $T$ is in the kernel as $T'$ and $\normord{\del\beta_-\del\beta_-}$ are.
\end{proof}

\noindent It is well-known \cite{AM4} that $W^\pm$, $W^0$ and $T'$ strongly generate the triplet algebra $\cW(p)$.  It is also well-known \cite{A1} that $W^0$ and $T'$ strongly generate the singlet algebra $\cM(p)$. Furthermore, it is easily verified that $T$ is a Virasoro field of central charge $c=2-6(p-1)^2/p$.
For the module of $V_{D_-}$ corresponding to the coset $D_-+\alpha_+\beta_-\lambda/4$, we use
the convenient notation $\mathsf{V}_{[\lambda]}$. 

\begin{thm}
As a module of $\cW(p)\otimes V_{D_-}$,
\begin{equation*}
\ker_{V_D}(Q_-) \cong W_{1,1}\otimes \mathsf{V}_{[0]}   \oplus W_{2,1}\otimes \mathsf{V}_{[2]}.
\end{equation*}
Moreover, this kernel is a simple vertex operator algebra.  
\end{thm}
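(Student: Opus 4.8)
The plan is to exploit the fact that the screening $Q_-=\oint\normord{e^{\alpha_-\beta_+(z)}}\,\dz z$ involves only the field $\beta_+$, and that $\beta_+(z)\beta_-(w)\sim 0$, so that $Q_-$ acts on $V_D$ purely through the $\del\beta_+$-Heisenberg factor. Writing the rank two Heisenberg algebra $M$ as the commuting product of its $\del\beta_+$- and $\del\beta_-$-subalgebras, $Q_-$ factorises as an operator on the $\beta_+$-direction tensored with the identity on the $\beta_-$-direction. Consequently, once $V_D$ is resolved into tensor products of $\beta_+$- and $\beta_-$-Fock spaces, the kernel of $Q_-$ may be computed factor by factor.

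First I would decompose the lattice. The sublattice $D_+\oplus D_-=\mathbb{Z}\alpha_+\beta_++\mathbb{Z}\alpha_+\beta_-$ has index two in $D$, with nontrivial coset represented by $v=\tfrac{\alpha_+}{2}(\beta_++\beta_-)$; explicitly, $D$ consists of the points $\tfrac{\alpha_+}{2}(a\beta_++b\beta_-)$ with $a\equiv b\pmod 2$, the even part being $D_+\oplus D_-$ and the odd part $D_+\oplus D_-+v$. On the odd coset the $\beta_+$- and $\beta_-$-components range independently over odd multiples of $\tfrac{\alpha_+}{2}$, so both cosets factorise as tensor products. In the notation of \secref{sec:WnM} this gives, as a module over $V_{D_+}\otimes V_{D_-}$,
\[
V_D\cong\bigl(\mathcal{V}_{[0]}\otimes\mathsf{V}_{[0]}\bigr)\oplus\bigl(\mathcal{V}_{[\alpha_{2,1}]}\otimes\mathsf{V}_{[2]}\bigr),
\]
where $\mathcal{V}_{[0]}$ and $\mathcal{V}_{[\alpha_{2,1}]}$ are the even and odd $\beta_+$-parts (the latter comprising the odd-half-integer multiples of $\alpha_+$, consistent with $\alpha_{2,1}=-\alpha_+/2$) and $\mathsf{V}_{[0]},\mathsf{V}_{[2]}$ the corresponding $\beta_-$-parts. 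Applying $Q_-$ in the $\beta_+$-direction and invoking the characterisations $W_{1,1}=\ker(Q_-\colon\mathcal{V}_{[0]}\to\cdots)$ and $W_{2,1}=\ker(Q_-\colon\mathcal{V}_{[\alpha_{2,1}]}\to\cdots)$ from \secref{sec:WnM} then yields the claimed decomposition, since $\cW(p)=W_{1,1}$ and $V_{D_-}=\mathsf{V}_{[0]}$.

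For simplicity, I would view the result as a $\mathbb{Z}_2$-graded extension $A=A_0\oplus A_1$ of $A_0=\cW(p)\otimes V_{D_-}$ by $A_1=W_{2,1}\otimes\mathsf{V}_{[2]}$. The key structural input is that this grading is realised internally: the current $h(z)=\alpha_-\del\beta_-(z)$ lies in $\ker_{V_D}(Q_-)$ by Proposition~\ref{prop:DefFields}, and since $\alpha_+\alpha_-=-2$ its zero mode $h_0$ takes even integer eigenvalues on $A_0$ and odd integer eigenvalues on $A_1$. Hence every ideal of $A$ is $h_0$-graded and so respects the decomposition $A=A_0\oplus A_1$. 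Now $A_0$ is a tensor product of the simple vertex algebras $\cW(p)$ and $V_{D_-}$, hence simple, while $A_1$ is the tensor product of the simple modules $W_{2,1}$ and $\mathsf{V}_{[2]}$, hence a simple $A_0$-module. Given a nonzero ideal $I=I_0\oplus I_1$, each $I_j$ is an $A_0$-submodule, so $I_0\in\{0,A_0\}$ and $I_1\in\{0,A_1\}$. If $I_0=A_0$ then $\lvert 0\rangle\in I$ and $I=A$; otherwise $I_0=0$, and if $I_1=A_1$ then nondegeneracy of the products forces $0\neq A_1\cdot A_1\subseteq I_0=0$, a contradiction. Thus $A$ is simple.

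The two decompositions are routine, following from the lattice combinatorics and the Felder-complex facts recalled in \secref{sec:WnM}. The main obstacle is making the simplicity argument rigorous in the present indefinite-signature setting: because the $\beta_-$-boson is timelike, $A$ is not of positive-energy CFT type and the standard simplicity criteria do not apply verbatim. The two points needing care are (i) the simplicity of $A_0$ and of the $A_0$-module $A_1$, which I would establish from the simplicity of rank one lattice vertex algebras (valid for any nondegenerate even lattice, irrespective of signature) together with the stability of simplicity under tensor products; and (ii) the nondegeneracy $A_1\cdot A_1\neq 0$, for which the explicit lattice realisation is essential. Here the coset of $A_1$ equals its own negative (as $2v\in D_+\oplus D_-$), so the invariant pairing of the simple vertex algebra $V_D$ restricts to a pairing $A_1\times A_1\to\mathbb{C}$; a nonzero value exhibits a product of generating fields of $A_1$ whose leading term lands nontrivially in $A_0$, which is exactly what the ideal argument requires.
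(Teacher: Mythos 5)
Your proof is correct, and while the decomposition half follows the paper's route in all essentials, your simplicity argument is genuinely different. For the decomposition, the paper computes $\ker_{V_{D_+}}(Q_-)$ and $\ker_{V_{D_+ + \alpha_+/2}}(Q_-)$ as sums of singlet modules via \eqref{eq:MinF} and then resums into $W_{1,1}$ and $W_{2,1}$ using \eqref{eq:tripletdecomp}, whereas you pass directly through the triplet-kernel characterisation of $W_{r,1}$ inside $\mathcal{V}_{[\alpha_{r,1}]}$; both rest on the same factorisation of $Q_-$ through the $\beta_+$-direction and the same index-two splitting of $D$ over $D_+ \oplus D_-$, so this difference is cosmetic. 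For simplicity, however, the paper invokes the fusion rules $W_{2,1} \fuse W_{2,1} = W_{1,1}$ (citing \cite{FHST,GR,TW1}) and $\mathsf{V}_{[2]} \fuse \mathsf{V}_{[2]} = \mathsf{V}_{[0]}$, viewing the kernel as an order-two simple current extension, while you run a direct $\ZZ_2$-graded ideal argument: $h_0$ (available inside the kernel by Proposition \ref{prop:DefFields}) forces every ideal to split as $I_0 \oplus I_1$, simplicity of $A_0$ and of the $A_0$-module $A_1$ reduces everything to the two extreme cases, and the case $I = A_1$ is killed by $A_1 \cdot A_1 \neq 0$. The trade-off is clear: the paper's argument is shorter but imports fusion computations for the triplet algebra, which are deep results in the logarithmic setting; yours is self-contained modulo standard facts (simplicity of lattice vertex algebras for nondegenerate even lattices of any signature, simplicity of $\cW(p)$ and $W_{2,1}$, and preservation of simplicity under tensor products, all of which hold here), and both approaches need those module-level inputs anyway. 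Your two flagged ``points needing care'' do close: for the nondegeneracy (ii) you do not even need the invariant pairing machinery, since $e, f \in A_1$ and Proposition \ref{prop:Bope} exhibits a nonzero constant leading coefficient in $e(z)f(w)$, which already gives $A_1 \cdot A_1 \neq 0$; alternatively, the simplicity of $V_D$ guarantees $Y(a,z)b \neq 0$ for nonzero $a,b$, so no separate verification is required. One small caution: the semisimple action of $h_0$ with even eigenvalues on $A_0$ and odd on $A_1$ (via $\alpha_+\alpha_- = -2$) is exactly what legitimises the claim that ideals are graded, so it is worth stating explicitly, as you did, that $h$ itself lies in the kernel --- without that, the graded-ideal step would fail.
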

\begin{proof}
Using \eqref{eq:MinF} with $s=1$, we get, as a direct sum of irreducible singlet modules,
\begin{equation}
\begin{split}
\ker_{V_{D_+}}(Q_-) &\cong \bigoplus_{r\in\ZZ}M_{2r+1,1},\qquad
\ker_{V_{D_++\alpha_+/2}}(Q_-) \cong \bigoplus_{r\in\ZZ}M_{2r,1}
\end{split}
\end{equation}
and
by \eqref{eq:tripletdecomp}, as a direct sum of irreducible triplet representations,
\begin{equation}
\ker_{V_{D_+}}(Q_-) \cong W_{1,1}\qquad\text{and}\qquad     \ker_{V_{D_++\alpha_+/2}}(Q_-) \cong W_{2,1}.
\end{equation}
Obviously, $\text{ker}_{V_{D_-}}(Q_-)=V_{D_-}$ and hence
the kernel $\ker_{V_D}(Q_-)$ is a direct sum of two irreducible 
$\cW(p)\otimes V_{D_-}$-modules: 
\begin{equation}
\ker_{V_D}(Q_-) \cong W_{1,1}\otimes \mathsf{V}_{[0]} \oplus W_{2,1}\otimes \mathsf{V}_{[2]}.
\end{equation}
Finally, the modules $W_{1,1}$ and $\mathsf{V}_{[0]}$ are the identities in their respective 
fusion rings and $W_{2,1}$ and $\mathsf{V}_{[2]}$ are simple currents of order two:  $W_{2,1} \fuse W_{2,1} = W_{1,1}$ \cite{FHST,GR,TW1} and $\mathsf{V}_{[2]} \fuse \mathsf{V}_{[2]} = \mathsf{V}_{[0]}$.  We therefore obtain
\begin{equation}
\begin{split}
(W_{1,1}\otimes \mathsf{V}_{[0]}) \times (W_{1,1}\otimes \mathsf{V}_{[0]})  
&=  W_{1,1}\otimes \mathsf{V}_{[0]}, \\  
(W_{1,1}\otimes \mathsf{V}_{[0]}) \times (W_{2,1}\otimes \mathsf{V}_{[2]})  
&=  W_{2,1}\otimes \mathsf{V}_{[2]}, \\ 
(W_{2,1}\otimes \mathsf{V}_{[2]}) \times (W_{2,1}\otimes \mathsf{V}_{[2]})  
&=  W_{1,1}\otimes \mathsf{V}_{[0]}, 
\end{split}
\end{equation}
which together imply that the kernel is simple as a module of $\cW(p) \otimes V_{D_-}$, hence is a simple vertex operator algebra.
\end{proof}

\noindent It is now fairly simple to characterize $\cW(p)$ as a commutant inside the kernel of screenings.  
We recall that physicists refer to the commutant subalgebra as the coset algebra. 

\begin{thm}\label{thm:triplet}
$V_{D_-}$ and $\cW(p)$ form 
a mutually commuting pair inside $\text{ker}_D(Q_-)$. In other words, they form a Howe pair.
\end{thm}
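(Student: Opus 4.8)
The plan is to establish the Howe pair by computing both commutants explicitly and showing that each of $\cW(p)$ and $V_{D_-}$ is precisely the commutant of the other inside $A := \ker_{V_D}(Q_-)$; the equalities $\Com(V_{D_-},A) = \cW(p)$ and $\Com(\cW(p),A) = V_{D_-}$ then immediately yield $\cW(p) = \Com(\Com(\cW(p),A),A)$ and likewise for $V_{D_-}$, which is the Howe pair condition. The two ``easy'' inclusions are $\cW(p) \subseteq \Com(V_{D_-},A)$ and $V_{D_-} \subseteq \Com(\cW(p),A)$: by Proposition~\ref{prop:DefFields} the strong generators $T'$, $W^\pm$, $W^0$ of $\cW(p)$ are built solely from $\beta_+$, while $V_{D_-}$ is generated by $\del\beta_-$ and $\normord{e^{\pm\alpha_+\beta_-}}$, which involve only $\beta_-$; since $\beta_+(z)\beta_-(w) \sim 0$, all cross operator product expansions are regular and the two subalgebras commute. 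Throughout I use that $v \in \Com(B,A)$ if and only if $b_n v = 0$ for every $b \in B$ and every $n \geq 0$, and I read the relevant gradings off the decomposition $A \cong W_{1,1}\otimes\mathsf{V}_{[0]} \oplus W_{2,1}\otimes\mathsf{V}_{[2]}$ of the preceding theorem.

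For the reverse inclusion $\Com(\cW(p),A) \subseteq V_{D_-}$, note that $\cW(p)$ acts only on the first tensor factor, so $\Com(\cW(p),A)$ decomposes as $\Com(\cW(p),W_{1,1})\otimes\mathsf{V}_{[0]} \oplus \Com(\cW(p),W_{2,1})\otimes\mathsf{V}_{[2]}$. Any commutant vector must in particular satisfy $L'_0 v = 0$, that is, have $T'$-conformal weight zero. In the vacuum module $W_{1,1}$ the weight-zero subspace is one-dimensional, spanned by the vacuum $\lvert 0\rangle$, which is annihilated by all non-negative modes; hence $\Com(\cW(p),W_{1,1}) = \CC\lvert 0\rangle$. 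In $W_{2,1}$ the minimal conformal weight is $h_{\alpha_{2,1}} = \tfrac{3p-2}{4} > 0$, so there are no weight-zero vectors and $\Com(\cW(p),W_{2,1}) = 0$. Therefore $\Com(\cW(p),A) = \CC\lvert0\rangle\otimes\mathsf{V}_{[0]} \cong \mathsf{V}_{[0]} = V_{D_-}$.

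For the reverse inclusion $\Com(V_{D_-},A) \subseteq \cW(p)$, I use that $V_{D_-}$ acts only on the second factor, so any commutant element is annihilated by the non-negative modes of $\del\beta_-$; in particular its zero mode $a^-_0$, which measures $\beta_-$-momentum, must annihilate it. In $\mathsf{V}_{[0]} = V_{D_-}$ this forces momentum zero and, together with annihilation by the positive Heisenberg modes, pins the vector down to the vacuum, so $\Com(V_{D_-},\mathsf{V}_{[0]}) = \CC\lvert0\rangle$. In $\mathsf{V}_{[2]}$ every state carries $\beta_-$-momentum in $(\ZZ+\tfrac12)\alpha_+$, which is never zero, so $\Com(V_{D_-},\mathsf{V}_{[2]}) = 0$. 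Hence $\Com(V_{D_-},A) = W_{1,1}\otimes\CC\lvert0\rangle \cong W_{1,1} = \cW(p)$, which completes the proof that the two algebras form a Howe pair. The crux of the argument --- and the only step I expect to require genuine care --- is controlling the off-diagonal summand $W_{2,1}\otimes\mathsf{V}_{[2]}$: it contributes nothing to either commutant precisely because the positivity $h_{\alpha_{2,1}} > 0$ kills all $T'$-invariants on the $W_{2,1}$ side, while the half-integral $\beta_-$-momenta kill all $V_{D_-}$-invariants on the $\mathsf{V}_{[2]}$ side.
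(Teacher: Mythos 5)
Your proof is correct and takes essentially the same approach as the paper's: both use the decomposition $\ker_{V_D}(Q_-) \cong W_{1,1}\otimes \mathsf{V}_{[0]} \oplus W_{2,1}\otimes \mathsf{V}_{[2]}$ and identify each commutant with the invariant (vacuum) states of the other algebra, which land only in the identity summand. The only difference is that you make explicit what the paper asserts briefly --- that the off-diagonal summand carries no invariants, because $h_{\alpha_{2,1}} = \tfrac{3p-2}{4} > 0$ kills $T'$-weight-zero vectors in $W_{2,1}$ and the half-integral $\beta_-$-momenta kill $V_{D_-}$-invariants in $\mathsf{V}_{[2]}$.
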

\begin{proof}
An element in the commutant of a vertex algebra corresponds to an invariant state for that algebra, that is, a vacuum state. But the only vacuum state in the $\cW(p)$-module $W_{1,1}\oplus W_{2,1}$ is the highest weight state of $W_{1,1}$, hence the invariant states of $\text{ker}_{V_D}(Q_-)$, invariant under $\cW(p)$, have the form $1\otimes v_0$, $v_0 \in V_{[0]}$.  These clearly generate a copy of $V_{D_-}$.
Analogously, the only vacuum state for $V_{D_-}$ in $V_{[0]}\oplus V_{[2]}$ is that of $V_{[0]}$, hence the invariant states of $\text{ker}_{V_D}(Q_-)$, invariant under $V_{D_-}$, have the form $w_{1,1}\otimes 1$, $w_{1,1} \in W_{1,1}$. In this way, we get $\cW(p)$.
\end{proof}

\subsection{A vertex algebra homomorphism}

One of the main problems with explicitly working with the Feigin-Semikhatov algebras is that the defining operator product expansions are only known to a few orders (and what is known is already decidedly complex).  In this section, we construct a free field realisation which captures a reasonably large amount of this complexity.  More precisely, 
we show that there exists a surjective map from $\FS{n}$ to a subalgebra of the lattice vertex algebra $V_D$.  
This will be used in the next section to realise the singlet algebra $\cM(p)$ as a commutant subalgebra.

\begin{defn}
Denote by $\mathcal B_p$ the vertex operator subalgebra of $V_D$ generated by $e$, $h$, $f$ and $T$, as defined in Proposition \ref{prop:DefFields}.
\end{defn}

\noindent We wish to compare $\mathcal B_p$ with the Feigin-Semikhatov algebra $\FS{p-1}$ of level $k=-(p-1)^2/p$. For this, we compute some operator product expansions.  The calculations are straight-forward, but tedious, and are therefore omitted.

\begin{prop}\label{prop:Bope}
The field $T$ is Virasoro of central charge $c=2-6(p-1)^2/p$ in $\mathcal B_p$, while $h$, $e$ and $f$ are Virasoro primaries of conformal dimensions $1$, $n/2$ and $n/2$, respectively.  Moreover,
\begin{gather*}
h(z)h(w) \sim \frac{-2/p}{(z-w)^2} ,\qquad
h(z)e(w) \sim \frac{e(w)}{(z-w)},\qquad
h(z)f(w) \sim -\frac{f(w)}{(z-w)}, \\
e(z)e(w) \sim f(z)f(w) \sim 0,
\end{gather*}
and, if $p>2$,
\begin{align*}
e(z)f(w) &= \frac{(-1)^{p}}{p^{p-1}}\frac{(2p-2)!}{(p-1)!}\frac{1}{(z-w)^{p-1}}+ 
 \frac{1}{2}\frac{(-1)^{p-1}}{p^{p-2}}\frac{(2p-2)!}{(p-1)!}\frac{h(w)}{(z-w)^{p-2}}+ \\
&\qquad \frac{1}{2}\frac{(-1)^{p}}{p^{p-3}}\frac{(2p-4)!}{(p-2)!}\frac{(p-2)\normord{h(w)h(w)}-\frac{2p-3}{p}\del h(w)-\frac{2}{p}T(w)}{(z-w)^{p-3}}+ \\
&\qquad \frac{(-1)^{p}}{p^{p-3}}\frac{(2p-4)!}{(p-1)!}\frac{1}{(z-w)^{p-4}}\Bigl(-\frac{(p-1)}{2p}\del T'(w)+\frac{(p-1)}{2}\normord{h(w)T'(w)}+\\
&\qquad (2p-3)(p-1)\bigl(-\frac{p}{24}\normord{h(w)h(w)h(w)}+\frac{1}{4}\normord{\del h(w)h(w)}-\frac{1}{6p}\del^2 h(w)\bigl)\Bigr)+\cdots.
\end{align*}
If $p=2$, this latter operator product expansion is replaced by
\begin{align*}
e(z)f(w) &= \frac{1}{(z-w)} - h(w) - \bigl( \del h(w) + T(w) \bigr) (z-w) + \\
&\qquad 2 \left( \mathbb{W}(w) - \frac{1}{4} \del T'(w) + \frac{1}{2} \normord{h(w) T'(w)} - \right. \\
& \qquad \left. \frac{1}{12} \normord{h(w) h(w) h(w)} + \frac{1}{4} \normord{\del h(w) h(w)} - \frac{1}{12} \del^2 h(w) \right) (z-w)^2 + \cdots,
\end{align*}
where $\mathbb{W}$ is a dimension $3$ Virasoro primary.
Again, the dots denote higher-order terms. 
\end{prop}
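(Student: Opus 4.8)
The plan is to obtain every one of these operator product expansions by a direct free-field (Wick) computation inside the rank two lattice vertex algebra $V_D$, using only the contractions $\del\beta_+(z)\del\beta_+(w) \sim (z-w)^{-2}$, $\del\beta_-(z)\del\beta_-(w) \sim -(z-w)^{-2}$ and $\del\beta_+(z)\del\beta_-(w) \sim 0$, together with the vertex-operator fusion rule, which for the $\beta_+$ direction ($\langle\beta_+,\beta_+\rangle=1$) reads $\normord{e^{a\beta_+(z)}}\normord{e^{b\beta_+(w)}} = (z-w)^{ab}\normord{e^{a\beta_+(z)+b\beta_+(w)}}$ and for the $\beta_-$ direction ($\langle\beta_-,\beta_-\rangle=-1$) carries the opposite exponent $(z-w)^{-ab}$. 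First I would dispatch the Virasoro and primary claims. Writing $T = T' - \tfrac{1}{2}\normord{\del\beta_-\del\beta_-}$, the two summands act on orthogonal Heisenberg directions and hence commute; $T'$ is the background-charge stress tensor of $\del\beta_+$ with $\alpha_0=\alpha_++\alpha_-$, of central charge $1-3\alpha_0^2 = 1-6(p-1)^2/p$ (using $\alpha_+\alpha_-=-2$), while $-\tfrac{1}{2}\normord{\del\beta_-\del\beta_-}$ is a negative-norm free boson of central charge $1$, so the central charges add to $2-6(p-1)^2/p$. Since $h=\alpha_-\del\beta_-$ is the $\beta_-$ current it is primary of dimension $1$; the exponentials underlying $e$ and $f$ have dimension $\tfrac{p-1}{2}$ by \eqref{eq:weightformula} applied in each direction, and are Virasoro primary. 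Because the screening current $\normord{e^{\alpha_+\beta_+}}$ has weight $1$, the operator $Q_+$ commutes with every Virasoro mode, so $f$ inherits both its dimension and its primarity from $\normord{e^{-\alpha_+(\beta_++\beta_-)/2}}$.

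Next come the simple brackets. Each of $h(z)h(w)$, $h(z)e(w)$ and $h(z)f(w)$ follows from a single contraction of $\del\beta_-$ against the relevant exponential, using $\alpha_-^2=2/p$ and $\alpha_+\alpha_-=-2$; for instance $h(z)e(w)\sim -\tfrac{1}{2}\alpha_+\alpha_-(z-w)^{-1}e(w)=(z-w)^{-1}e(w)$. The regularity of $e(z)e(w)$ and $f(z)f(w)$ expresses the vanishing of the self-pairing $\langle -\tfrac{\alpha_+}{2}(\beta_+-\beta_-),\,-\tfrac{\alpha_+}{2}(\beta_+-\beta_-)\rangle$ and its analogue for $f$, which holds because the $\beta_+$ and $\beta_-$ contributions cancel by the indefinite signature; for $f$ one checks in addition that the double contour defining the two screenings yields no residue.

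The substance is the $e(z)f(w)$ expansion. I would write
\[
e(z)f(w) = c\oint_w \normord{e^{-\alpha_+(\beta_+-\beta_-)(z)/2}}\,\normord{e^{\alpha_+\beta_+(x)}}\,\normord{e^{-\alpha_+(\beta_++\beta_-)(w)/2}}\,\dd x,
\]
with $c=(-1)^p(p-1)!/p^{p-1}$, and collect the scalar contraction factor $(z-w)^{p}(z-x)^{-p}(x-w)^{-p}$: the $\beta_+$ and $\beta_-$ pairings of the $z$ and $w$ legs each contribute $(z-w)^{p/2}$, while the screening leg at $x$ pairs only through $\beta_+$. The total momentum in each direction is zero, so the surviving normal-ordered exponential is a pure Heisenberg descendant of the identity. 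Setting $x=w+t$, I would Taylor expand both $(z-x)^{-p}=(z-w)^{-p}(1-t/(z-w))^{-p}$ and $\normord{e^{\alpha_+\beta_+(w+t)}}$ in $t$, then extract the residue at $t=0$ against the $t^{-p}$ coming from $(x-w)^{-p}$, which isolates the coefficient of $t^{p-1}$. A short check shows the leading binomial $\binom{2p-2}{p-1}$ appears at order $(z-w)^{-(p-1)}$, and the constant $c$ has been chosen precisely so that this leading coefficient becomes $\tfrac{(-1)^p}{p^{p-1}}\tfrac{(2p-2)!}{(p-1)!}$. Re-expressing the remaining normal-ordered monomials in $\del\beta_+,\del\beta_-$ in terms of $h$, $T$, $T'$ and their normally-ordered products then yields the stated expansion.

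The main obstacle is exactly this last reorganization: the binomial bookkeeping from $(1-t/(z-w))^{-p}$ combined with the multinomial expansion of the screening exponential produces, at each order in $(z-w)$, a free-field polynomial that must be matched term by term against the composites $\normord{hh}$, $\normord{hhh}$, $\normord{hT'}$, $\del h$, $\del^2 h$ and so on with their exact rational coefficients. The delicate point is to verify that the coefficient at order $(z-w)^{4-p}$ contains \emph{no} independent dimension-three primary, so that, unlike in $\FS{p-1}$, no $W$ term survives; this is the free-field incarnation of the remark that $W$ lies in the kernel of the proposed homomorphism. Finally, the case $p=2$ must be treated separately: there $e$ and $f$ have dimension $\tfrac{1}{2}$, the leading pole is simple, the generic pole structure collapses, and the dimension-three combination surviving at order $(z-w)^2$ is a genuine new primary $\mathbb{W}$ rather than a composite, which is why it is displayed explicitly.
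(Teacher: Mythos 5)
Your proposal is correct and follows essentially the same route as the paper: the paper offers no written proof of this proposition (it states only that the operator product computations in $V_D$ are ``straight-forward, but tedious'' and omits them), and your plan is exactly that direct free-field computation, with the verifiable details all checking out --- the contraction exponents $(z-w)^p(z-x)^{-p}(x-w)^{-p}$, the vanishing total momentum, the central charge additivity $\bigl(1-3(\alpha_++\alpha_-)^2\bigr)+1 = 2-6(p-1)^2/p$, and the normalisation check $c\binom{2p-2}{p-1} = \frac{(-1)^p}{p^{p-1}}\frac{(2p-2)!}{(p-1)!}$ for the leading pole. You also correctly isolate the two genuinely delicate points the bare statement hides, namely the absence of an independent dimension-three primary at order $(z-w)^{4-p}$ for $p>2$ and the separate treatment of $p=2$ where $\mathbb{W}$ survives.
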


\noindent Carefully comparing the operator product expansions of Propositions \ref{prop:FSope} and \ref{prop:Bope} now motivates the following definition:

\begin{defn} \label{def:omega}
For $p>2$, we define a map $\omega$ between the generators of $\FS{p-1}$ at level $k=-(p-1)^2/p$ and $\mathcal B_p$ as follows:
\[
\omega(H) = h, \quad \omega(E) = e,\quad\omega(F) = f,\quad\omega(L) = T,\quad \omega(W) = \omega(\Lambda) = \cdots = 0\,.
\]
Here, we let $W$, $\Lambda$, and the higher-dimensional Virasoro primaries they generate, be annihilated by $\omega$.  For $p=2$, we instead set
\[
\omega(H) = h, \quad \omega(E) = e,\quad\omega(F) = f,\quad\omega(L) = T,\quad \omega(W) = \mathbb{W},\quad \ldots \quad,
\]
where the dots indicate that one may identify non-zero fields in $\mathcal{B}_p$ which serve as the images under $\omega$ of the higher-dimensional Virasoro primaries.
\end{defn}

\noindent It appears that $\omega$ induces a surjective homomorphism of vertex operator algebras from $\FS{p-1}$, at the appropriate level, and $\mathcal B_p$.  Our lack of knowledge concerning the full structure of the Feigin-Semikhatov algebras for large $p$ makes this impossible to check in general.  However, we can verify it for $p \leq 5$.

First, note that the operator product expansion of $E$ and $F$ generates $H$, $L$, $W$, and probably the other higher-dimensional primaries.  Therefore, it is enough to verify the homomorphism property on $E$, $F$ and whichever fields appear in the singular terms of this expansion --- the strong generators --- because the fields appearing in the regular terms may be expressed as linear combinations of normally-ordered products of the strong generators.  (This enables one to compute, for example, the explicit form of the $p=2$ field $\mathbb{W}$ introduced in Proposition \ref{prop:Bope}.)

For $p=2$, the strong generators are just $E$ and $F$, so we need only compare
\[
E(z) F(w) \sim \frac{1}{(z-w)} \qquad \text{with} \qquad e(z) f(w) \sim \frac{1}{(z-w)}
\]
to guarantee that $\omega$ extends to a homomorphism.  Since both $h$ and $T$ (as well as $\mathbb{W}$) may be expressed as linear combinations of normally-ordered products of $e$ and $f$, this homomorphism is surjective.  The story is similar for $p=3$, for which $h$ becomes a strong generator, and $p=4$, for which $h$ and $T$ are both promoted to strong generators.

When $p=5$, $W$ becomes a strong generator, in addition to $h$ and $T$.  Thus, we need to check that its operator product expansions are consistent with $W \in \ker \omega$.  These were given in Proposition \ref{prop:Wope}.  We note that the expansion of $W$ with itself requires that $\Lambda \in \ker \omega$ and, in fact, that $W$ generates a 
proper ideal in the operator product algebra of $\FS{p-1}$.  Moreover, the expansions of $W$ with $E$ and $F$ require that the following non-trivial relations hold in $\mathcal B_5$:
\begin{lemma}\label{lemma:rel}
In $\mathcal B_5$, we have
\begin{equation*}
\begin{split}
0  &= \normord{h(z)\del e(z)}-2\normord{\del h(z)e(z)}+\frac{1}{5}\del^2e(z)-\normord{T(z)e(z)},\\
0  &=  \normord{h(z)\del f(z)}-2\normord{\del h(z)f(z)}-\frac{1}{5}\del^2f(z)+\normord{T(z)f(z)}.
\end{split}
\end{equation*}
\end{lemma}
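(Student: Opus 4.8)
The plan is to treat both identities as statements in the rank-two lattice vertex algebra $V_D$, in which $\mathcal B_5$ sits as a subalgebra, and to verify them by a direct free-field computation. Since the state--field correspondence is injective, it suffices to show that the state obtained by acting with each field combination on the vacuum vanishes. First I would record the mode data: with $p=5$ one has $\alpha_+^2=10$, $\alpha_-^2=2/5$ and $\alpha_+\alpha_-=-2$; the field $e$ is the vertex operator attached to the lattice vector $v=-\tfrac{\alpha_+}{2}(\beta_+-\beta_-)$, which is null since $\langle\beta_+,\beta_+\rangle+\langle\beta_-,\beta_-\rangle=1+(-1)=0$, while $h=\alpha_-\del\beta_-$ and $T$ are as in Proposition~\ref{prop:DefFields}. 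A short check confirms that $\ket{v}$ has conformal weight $(\alpha_+^2+\alpha_+\alpha_-)/4=2=n/2$, consistent with Proposition~\ref{prop:Bope}.

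For the first identity I would translate the four dimension-four fields into states in the Fock module $\mathcal F_v$ two levels above its ground state $\ket v$. Using the dictionary $\del e\leftrightarrow L_{-1}\ket v$, $\del^2e\leftrightarrow L_{-1}^2\ket v$, $\normord{Te}\leftrightarrow L_{-2}\ket v$, $\normord{h\del e}\leftrightarrow h_{-1}L_{-1}\ket v$ and $\normord{\del h\,e}\leftrightarrow h_{-2}\ket v$, the claim reduces to the single vector identity
\begin{equation*}
h_{-1}L_{-1}\ket v-2h_{-2}\ket v+\tfrac15 L_{-1}^2\ket v-L_{-2}\ket v=0.
\end{equation*}
Expanding $L_{-1}$ and $L_{-2}$ in Heisenberg modes (with the linear-dilaton correction coming from the $\del^2\beta_+$ term in $T$ and the opposite sign for the timelike modes of $\beta_-$) produces a vector in the five-dimensional level-two subspace of $\mathcal F_v$, spanned by $b_{-2}\ket v$, $c_{-2}\ket v$, $b_{-1}b_{-1}\ket v$, $b_{-1}c_{-1}\ket v$ and $c_{-1}c_{-1}\ket v$; I would then read off its five coefficients and check that each vanishes. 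The factorisation $V_D\cong V_+\otimes V_-$ into the $\beta_+$- and $\beta_-$-sectors, under which $e=e_+\otimes e_-$, $h$ and $\del h$ act only on $V_-$, and $T=T'\otimes 1+1\otimes T_-$ with $T_-=-\tfrac12\normord{\del\beta_-\del\beta_-}$, decouples the computation and makes the cancellation transparent. It is precisely the coefficient $1/5=1/p$, together with the values $\alpha_+^2=10$ and $\alpha_+\alpha_-=-2$, that forces the vanishing, which is why the relation is special to $p=5$.

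For the second identity I would avoid repeating the calculation by exploiting the lattice isometry $\sigma\colon\beta_+\mapsto\beta_+$, $\beta_-\mapsto-\beta_-$. This preserves the bilinear form, the lattice $D$, the background-charge vector and both screening currents, hence lifts to an automorphism of $V_D$. Under $\sigma$ one has $h\mapsto-h$, $T\mapsto T$ and $e\mapsto\tilde e:=\normord{e^{-\alpha_+(\beta_++\beta_-)/2}}$, so the now-established first relation maps to
\begin{equation*}
\normord{h\del\tilde e}-2\normord{\del h\,\tilde e}-\tfrac15\del^2\tilde e+\normord{T\tilde e}=0.
\end{equation*}
Since the current $J_+=\normord{e^{\alpha_+\beta_+}}$ has conformal weight $1$ and regular operator product expansion with both $h$ and $T$, the screening charge $Q_+$ commutes with $h$, with $\del$ and with the Virasoro modes, and may therefore be moved through these normally-ordered products onto the $\tilde e$ factor. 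Recalling that $f$ is exactly $\tfrac{(-1)^p(p-1)!}{p^{p-1}}$ times $Q_+\tilde e$, applying $Q_+$ to the displayed identity reproduces the second relation of the lemma.

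The main obstacle is the first, direct computation: one must correctly assemble the background-charge-modified Virasoro modes and track the indefinite signature (the $-1$ norm of $\beta_-$ and the resulting sign in the $c$-mode commutators) so that the five level-two coefficients genuinely cancel at $p=5$. The passage of $Q_+$ through the normally-ordered products in the second part is routine once one invokes the standard fact that a weight-one current with regular operator product expansion against $A$ lets $\oint J_+$ act as a derivation leaving $A$ untouched, but it is worth stating carefully to ensure the normalisation of $f$ comes out exactly.
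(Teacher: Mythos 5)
Your proposal is correct, and for the second relation it takes a genuinely different (and slicker) route than the paper. The paper's own ``proof'' simply declares both identities to be a straightforward omitted computation, and --- tellingly --- supplies the explicit closed form of $f(z)$ (an exponential times a quartic differential polynomial in $\del\beta_+$) precisely because the direct check of the $f$-relation is the heavy part; your first half thus coincides with the paper's implicit method, while your second half eliminates the need for that explicit formula altogether. Your reduction of the first identity is completely specified and does close up: the dictionary $\normord{h\del e}\leftrightarrow h_{-1}L_{-1}\ket{v}$, $\normord{\del h\,e}\leftrightarrow h_{-2}\ket{v}$, $\del^2 e\leftrightarrow L_{-1}^2\ket{v}$, $\normord{Te}\leftrightarrow L_{-2}\ket{v}$ is right, and expanding in the level-two basis $\set{b_{-2}\ket{v}, c_{-2}\ket{v}, b_{-1}^2\ket{v}, b_{-1}c_{-1}\ket{v}, c_{-1}^2\ket{v}}$ with $\alpha_+^2=10$, $\alpha_+\alpha_-=-2$, $\alpha_-=-\alpha_+/5$, all five coefficients vanish (for instance $b_{-1}^2$ gives $\tfrac15\cdot\tfrac{10}{4}-\tfrac12=0$ and $b_{-1}c_{-1}$ gives $1-\tfrac15\cdot 5=0$); it is worth noting that only the $c_{-2}$ coefficient, $-2\alpha_-+\tfrac{\alpha_+}{10}-\tfrac{\alpha_+}{2}$, actually forces $\alpha_+=-5\alpha_-$, i.e.\ $p=5$ --- the other four cancel for all $p$ --- which sharpens your closing remark about where the relation is special. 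Your derivation of the second identity is sound: the involution $\beta_+\mapsto\beta_+$, $\beta_-\mapsto-\beta_-$ preserves the form, the lattice $D$ (it swaps the two $\ZZ$-generators), the background charge and both screening currents, hence lifts to an automorphism of $V_D$; the lift may introduce cocycle signs on lattice vertex operators, but since the transported relation is linear in $\tilde e$ these are harmless, a caveat you should state. Pushing $Q_+$ through is likewise legitimate because $\normord{e^{\alpha_+\beta_+}}$ is a weight-one primary with regular operator product expansions against $\del\beta_-$, so $Q_+$ commutes with every $h_n$ and $L_n$ and therefore with each of the four normally ordered products at the level of states ($Q_+ h_{-1}L_{-1}\ket{\tilde e} = h_{-1}L_{-1}\ket{Q_+\tilde e}$, etc.), and the overall normalisation $f = \tfrac{(-1)^p(p-1)!}{p^{p-1}}Q_+\tilde e$ is immaterial by homogeneity. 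What your approach buys is a proof of the $f$-relation that never touches the five-term explicit expression for $f$; what the paper's approach buys is independence from the automorphism lift and from screening-charge bookkeeping, at the cost of a substantially longer normal-ordering computation.
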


\noindent Again, checking these relations is a straight-forward computation which will be omitted.  However, we mention that it is useful for these calculations to note that $f(z)$ has the following explicit form in $\mathcal B_5$:
\begin{multline*}
f(z) = -\frac{1}{5^4}:e^{\alpha_+(\beta_+(z)-\beta_-(z))/2}\Bigl(\alpha_+\del^4\beta_+(z)+4\alpha^2_+\del^3\beta_+(z)\del\beta_+(z)+3\alpha_+^2\del^2\beta_+(z)\del^2\beta_+(z)\\
+6\alpha_+^3\del^2\beta_+(z)\del\beta_+(z)\del\beta_+(z)+\alpha_+^4\del\beta_+(z)\del\beta_+(z)\del\beta_+(z)\del\beta_+(z)\Bigr):.
\end{multline*}
We remark that these relations mean that $\mathcal B_5$ is not freely generated by $e$, $f$, $h$ and $T$ (it is not universal as a vertex operator algebra).

To summarise, we have proven the following result:

\begin{thm} \label{thm:homo}
For $p \leq 5$, $\omega$ induces a surjective vertex algebra homomorphism between the Feigin-Semikhatov algebra $W^{(2)}_{p-1}$ of level $k=-(p-1)^2/p$ and $\mathcal B_p$.
\end{thm}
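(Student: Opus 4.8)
The plan is to exploit the universal nature of $\FS{p-1}$. By construction its underlying space carries a PBW-type basis built from the strong generators $E$, $F$, $H$, $L$, $W$, $\Lambda$, \ldots subject to no relations beyond the prescribed singular operator product expansions; in particular, by the reconstruction theorem a map into any other vertex operator algebra is determined by the images of the strong generators, and it extends to a homomorphism precisely when those images reproduce the prescribed singular operator product expansions, with the composite fields on the right-hand sides interpreted through the map. Surjectivity is then immediate: the generators $e$, $f$, $h$, $T$ of $\mathcal B_p$ all lie in the image, being the values of $\omega$ on $E$, $F$, $H$, $L \in \FS{p-1}$ (the latter two being strong generators, or, for small $p$, composites whose images are pinned to $h$ and $T$ by Definition~\ref{def:omega}). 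Thus the whole theorem reduces to checking, case by case for $p = 2,3,4,5$, that $\omega$ preserves the singular operator product expansions of the strong generators.

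For $p \leq 4$ the strong generators are drawn only from $\{E, F, H, L\}$, since $W$ first enters the singular part of the $E(z)F(w)$ expansion when $p \geq 5$, its coefficient sitting at the pole $(z-w)^{-(p-4)}$. Here the verification is a direct, termwise comparison of Proposition~\ref{prop:FSope} with Proposition~\ref{prop:Bope}: the two families of singular expansions are literally identical after deleting tildes and substituting $(H,E,F,L)\mapsto(h,e,f,T)$, so $\omega$ automatically respects every singular coefficient and extends to a homomorphism. (For $p=2$ one need only match $E(z)F(w)\sim(z-w)^{-1}$ with $e(z)f(w)\sim(z-w)^{-1}$, the fields $h$, $T$, $\mathbb W$ being composite in $e,f$.)

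The genuinely new content lies in $p=5$, where $W$ becomes a strong generator and Definition~\ref{def:omega} stipulates $\omega(W)=0$. One must then confirm that annihilating $W$ is consistent with all of its operator product expansions, recorded in Proposition~\ref{prop:Wope}. The expansion $W(z)H(w)\sim 0$ is vacuously compatible; the expansion $W(z)W(w)$ forces $\omega(\Lambda)=0$ and, crucially, shows that $W$ together with $\Lambda$ and the higher primaries it generates spans a \emph{proper} ideal of $\FS{4}$, so that the quotient by $\ker\omega$ is nontrivial and $\omega$ descends to a well-defined homomorphism onto $\mathcal B_5$. Finally, the expansions $W(z)E(w)$ and $W(z)F(w)$ demand that the $\omega$-images of their right-hand sides vanish in $\mathcal B_5$, which is exactly the pair of relations asserted in Lemma~\ref{lemma:rel}.

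The main obstacle is therefore the $p=5$ step, within which two points require care. First, one must establish the relations of Lemma~\ref{lemma:rel} as genuine identities in $\mathcal B_5 \subseteq V_D$; this is a concrete but lengthy lattice computation, most efficiently handled using the explicit free-field form of $f(z)$ recorded after the lemma, which reduces each relation to a normal-ordering identity among derivatives of $\beta_+$. Second, one must verify that killing $W$ and the higher primaries really does leave a proper ideal, equivalently that it does not collapse any of $e$, $f$, $h$, $T$ or the vacuum; this is guaranteed a posteriori by the nonvanishing of these fields in the lattice realisation, which is what certifies that $\omega$ is a surjection $\FS{4}\twoheadrightarrow\mathcal B_5$ rather than the zero map.
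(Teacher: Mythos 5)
Your proposal is correct and follows essentially the same route as the paper: reduce the homomorphism check to the strong generators appearing in the singular part of the $E(z)F(w)$ expansion, match Propositions \ref{prop:FSope} and \ref{prop:Bope} termwise for $p\leq 4$, and for $p=5$ verify that $\omega(W)=0$ is consistent with Proposition \ref{prop:Wope}, which forces $\Lambda\in\ker\omega$, shows $W$ generates a proper ideal, and reduces everything to the lattice relations of Lemma \ref{lemma:rel}. The paper likewise omits the explicit free-field verification of those relations, using the displayed form of $f(z)$ in $\mathcal B_5$, so no substantive step differs.
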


\noindent The following conjecture is therefore natural:

\begin{conj}
For general $n$, there exists a surjective vertex algebra homomorphism (extending $\omega$) between the Feigin-Semikhatov algebra $W^{(2)}_n$ of level $k=-\frac{n^2}{n+1}$ and $\mathcal B_{n+1}$.
\end{conj}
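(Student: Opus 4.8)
The plan is to exploit the fact that, by its very definition, the Feigin-Semikhatov algebra $\FS{n}$ is freely generated: it carries a PBW-type basis of ordered monomials in a fixed set of strong generators $\{X^{(i)}\}_{i\in S}$ subject to no relations beyond the singular operator product expansions of those generators. For such a universal vertex algebra, a homomorphism into any target is determined by the images of the strong generators, and an assignment of images extends to a (unique) homomorphism precisely when it reproduces the same singular operator product expansions. The conjecture thus reduces to two verifications: first, that the prescribed images $e,f,h,T$ together with the zeros assigned to the higher-dimensional primaries satisfy, inside $\mathcal B_{n+1}\subseteq V_D$, exactly the singular operator product expansions of the generators $E,F,H,L,W,\Lambda,\dots$ of $\FS{n}$ at level $k=-n^2/(n+1)$; and second, surjectivity.

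Surjectivity is immediate: $\mathcal B_{n+1}$ is by construction the subalgebra of $V_D$ strongly generated by $e$, $h$, $f$ and $T$, each of which lies in the image of $\omega$, so $\omega$ is onto. The substance therefore lies in matching operator product expansions, and it is natural to split the strong generators of $\FS{n}$ into the low-dimensional ones $E,F$ (dimension $n/2$), $H$ (dimension $1$) and $L$ (dimension $2$), which $\omega$ sends to the explicit fields $e,f,h,T$, and the higher-dimensional primaries of dimensions $3,4,\dots,n-1$, which $\omega$ annihilates.

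For the first family one must show that the operator product expansions of $e,f,h,T$ close on normally-ordered polynomials in $e,f,h,T$ alone, reproducing the $\FS{n}$ expansions with every higher generator replaced by $0$; Proposition \ref{prop:Bope} is exactly this statement for the singular terms computed by Feigin and Semikhatov, and the claim is that this persists to all orders. For the second family, since $\omega$ kills the generator appearing on the left of such an expansion, one must prove that $\omega$ also kills every field produced on the right. This forces a family of nontrivial relations in $\mathcal B_{n+1}$ generalising Lemma \ref{lemma:rel}, which is precisely the $n=4$ instance arising from $W(z)E(w)$ and $W(z)F(w)$. Concretely, one would use the explicit lattice realisation of $e,f,h,T$ in $V_D$ to compute the images of the right-hand sides of the $\FS{n}$ expansions $W(z)X(w)$, $\Lambda(z)X(w),\dots$ and check that each vanishes identically, thereby confirming that the higher generators generate an ideal contained in the kernel of the putative homomorphism.

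The main obstacle is structural rather than computational: for $n\geq 4$ the full operator product algebra of $\FS{n}$ is simply not known, so neither the closure statement of the first family nor the relations of the second family can be verified term by term in general, and this is exactly why the statement remains a conjecture. Making the argument unconditional therefore seems to require a more conceptual input, either a complete determination of the strong generators and their operator product expansions (which would follow from establishing the quantum Hamiltonian reduction description of Conjecture \ref{conj:qh}, fixing the generating dimensions to be $n/2,n/2,1,2,\dots,n-1$), or, more promisingly, a direct comparison of free-field realisations: one would seek to match the Feigin-Semikhatov construction of $\FS{n}$ inside $\AKMSA{sl}{n}{1}\otimes V_L$ with the lattice realisation of $\mathcal B_{n+1}$ inside $V_D$, so that $\omega$ arises from an explicit map of free fields and its well-definedness no longer depends on knowing the abstract operator product algebra.
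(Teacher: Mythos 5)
You have correctly recognised that this statement is a conjecture which the paper itself does not prove --- it establishes only the cases $p = n+1 \leq 5$ (Theorem \ref{thm:homo}) --- and your outline coincides with the paper's own reasoning: surjectivity is immediate because $e$, $h$, $f$ and $T$ generate $\mathcal B_{n+1}$ by definition, the homomorphism property need only be checked on the strong generators (the comparison of Propositions \ref{prop:FSope} and \ref{prop:Bope}, with the higher-dimensional primaries $W, \Lambda, \ldots$ sent to zero forcing relations in $\mathcal B_{n+1}$ exactly as in Lemma \ref{lemma:rel} for $p=5$), and the obstruction you name --- the unknown operator product expansions of the strong generators of dimension greater than $3$ for $n \geq 4$ --- is precisely the one the paper cites for leaving the general case open. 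Your closing suggestions (establishing Conjecture \ref{conj:qh}, or matching the free-field realisation inside $\AKMSA{sl}{n}{1}_k \otimes V_L$ with the lattice realisation in $V_D$) are sensible directions beyond the paper, but as it stands your proposal is a faithful reconstruction of the paper's strategy and its limits, not a proof.
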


\noindent The obstruction to verifying this for higher $n$ is the unknown operator product expansions of the strong generators of dimension greater than $3$.

We remark that if the algebra considered by Feigin and Semikhatov in \cite{FS} turns out not to be freely generated, meaning that there are non-trivial linear dependencies among normally-ordered products of the generators, then an analogue of Theorem \ref{thm:homo} will still hold with $\FS{p-1}$ and $\mathcal B_p$ replaced by their appropriate quotients.  In particular, $\FS{1}$ may be identified with the $\beta \gamma$ ghost vertex algebra which is universal and simple.  The surjection $\omega$ therefore gives rise to an isomorphism of vertex algebras $\FS{1} \cong \mathcal B_2$.\footnote{In this respect, it is convenient that $p=2$ must be treated separately in Proposition \ref{prop:Bope}.  If $\mathbb{W}$ vanished (as it does for $p>2$), then $\omega$ would have a non-trivial kernel, contradicting the simplicity of $\FS{1}$.}  Similarly, $\FS{2}$ is the universal form of $\AKMA{sl}{2}$ at level $-\tfrac{4}{3}$ and one can easily check that the kernel generated by $W$ is the maximal ideal of $\FS{2}$ 
using the explicit knowledge of the singular vectors of the vacuum module.  In this case, $\omega$ induces an isomorphism between $\AKMA{sl}{2}_{-4/3}$ (the simple quotient of $\FS{2}$) and $\mathcal{B}_3$.  We will come back to these isomorphisms in Section \ref{sec:Branch}.

\subsection{The singlet as a commutant subalgebra}

We begin by recalling two results on the kernels of screenings due to Adamovi\'{c}:

\begin{lemma}[Adamovi\'{c} {\cite[Prop.~2.1 and Thm.~3.1]{A1}}] \label{thm:adam}
Let $M_+$ be the rank one Heisenberg vertex operator algebra generated by $\partial\beta_+$. 
Then, $\ker_{M_+}(Q_+)$ is the Virasoro algebra with Virasoro element $T'$ and 
$\ker_{M_+}(Q_-)$ is the singlet algebra $\cM(p)$ generated by $T'$ and $W^0$.
\end{lemma}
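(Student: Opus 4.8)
The plan is to reduce both assertions to the action of the two screenings on the vacuum Fock space. The Heisenberg algebra $M_+$ generated by $\del\beta_+$, with $\del\beta_+(z)\del\beta_+(w) \sim (z-w)^{-2}$, is precisely the Heisenberg vertex operator algebra $\mathcal{F}_0$ of \secref{sec:WnM}, with $\beta_+$ playing the role of the auxiliary field $\phi$; under this identification the charges $Q_\pm = \oint \normord{e^{\alpha_\pm\beta_+(z)}}\dz z$ are exactly the screenings $\oint V_{\alpha_\pm}(z)\dz z$. The statement about $Q_-$ is then, by the very definition of the singlet algebra in \secref{sec:Screening} (that is, by \eqref{eq:MinF} with $r=s=1$), the equality $\ker_{M_+}(Q_-) = M_{1,1} = \cM(p)$; the non-trivial content that remains is (i) the determination of $\ker_{M_+}(Q_+)$ and (ii) the assertion that $T'$ and $W^0$ strongly generate $\cM(p)$.

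For (i) the easy inclusion is that $T'$, and hence the entire vertex subalgebra it generates, lies in $\ker_{M_+}(Q_+)$: the weight formula \eqref{eq:weightformula} gives $h_{\alpha_+} = \tfrac{\alpha_+}{2}(\alpha_+-\alpha_+-\alpha_-) = -\tfrac{1}{2}\alpha_+\alpha_- = 1$, so the screening current is a weight-one primary for $T'$; its zero mode $Q_+$ therefore commutes with every mode of $T'$, whence $T'\in\ker_{M_+}(Q_+)$. For the reverse inclusion I would use the Felder complex built from the long screening. Since $Q_+$ maps $\mathcal{F}_{\alpha_{r,1}}\to\mathcal{F}_{\alpha_{r-2,1}}$ as a Virasoro module homomorphism --- the relevant monodromy $\alpha_{r,1}\cdot\alpha_+ = (1-r)p$ being integral, so that a single screening suffices here, unlike the $s>1$ case of \eqref{eq:MFeldercomplex} --- one obtains the Felder resolution of the simple Virasoro vacuum module, whose exactness identifies $\ker(Q_+ \colon \mathcal{F}_0 \to \mathcal{F}_{\alpha_+})$ with $L(0,c_p)$. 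This is exactly the Virasoro vertex algebra generated by $T'$.

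For (ii) I would start from the Virasoro decomposition $\cM(p) = \bigoplus_{n\geq0} L(h_{\alpha_{2n+1,1}}, c_p)$ recalled in \secref{sec:Screening}. The $n=0$ summand is $L(0,c_p)$, generated by $T'$, while $W^0$ is the primary generating the $n=1$ summand $L(h_{\alpha_{3,1}},c_p)$ of weight $2p-1$. It then suffices to show that the normally-ordered products of $T'$, $W^0$ and their derivatives produce a non-zero highest weight vector in every remaining summand $n\geq 2$; since such products can be taken to arbitrarily high conformal weight, there is no degree obstruction, and the exhaustion may be confirmed by comparing the graded dimension of the subalgebra they generate against the singlet character \eqref{eq:singletchars}.

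The main obstacle is the reverse inclusion in (i) together with the exhaustion step in (ii). The containments $T', W^0 \in \ker$ are routine operator product computations, but ruling out anything larger requires genuine control of the Virasoro-module structure of the Fock spaces --- precisely the exactness of the Felder complexes and the consequent identification of the screening kernels with simple Virasoro and singlet modules. This representation-theoretic input is where one appeals to the detailed analysis of \cite{A1} and \cite{NT}.
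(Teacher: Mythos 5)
First, a point of comparison: the paper does not prove this lemma at all --- it is imported verbatim from Adamovi\'{c}, with the citation \cite[Prop.~2.1 and Thm.~3.1]{A1} standing in for the proof. So your proposal cannot diverge from ``the paper's proof''; the honest question is whether your sketch is a correct self-contained argument or, like the paper, a reduction to the cited literature. Your framing is right on several counts: the identification of $M_+$ with $\mathcal{F}_0$ (with $\del\beta_+$ playing the role of $a$), the observation that $\ker_{M_+}(Q_-)=\cM(p)$ is true essentially by the definition in \secref{sec:Screening} together with \eqref{eq:MinF} at $r=s=1$, and the easy inclusion $T'\in\ker_{M_+}(Q_+)$ via $h_{\alpha_+}=-\tfrac{1}{2}\alpha_+\alpha_-=1$ from \eqref{eq:weightformula} are all correct, and you correctly isolate the two pieces of genuine content.

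However, the mechanism you propose for the hard direction of (i) is misstated in a way that matters. The long-screening complex $\cdots\to\mathcal{F}_{\alpha_{3,1}}\xrightarrow{Q_+}\mathcal{F}_{\alpha_{1,1}}\xrightarrow{Q_+}\mathcal{F}_{\alpha_{-1,1}}\to\cdots$ is \emph{not} exact at $\mathcal{F}_0=\mathcal{F}_{\alpha_{1,1}}$: since $Q_+$ is the zero mode of a weight-one field it preserves conformal weight, the incoming module $\mathcal{F}_{\alpha_{3,1}}=\mathcal{F}_{-\alpha_+}$ has lowest weight $h_{\alpha_{3,1}}=2p-1>0$, and yet the vacuum $\mathbf{1}$, of weight $0$, lies in $\ker(Q_+)$. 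So exactness would be flatly contradicted; what one actually needs is that the \emph{cohomology} at that spot is $L(0,c_p)$ \emph{together with} the vanishing of the incoming differential (equivalently, a one-sided resolution $0\to L(0,c_p)\to\mathcal{F}_0\xrightarrow{Q_+}\mathcal{F}_{\alpha_+}\to\cdots$, which is available here because the Verma embeddings at $c_{1,p}$ form chains rather than the braided pattern of minimal models). That statement is precisely Adamovi\'{c}'s Proposition 2.1, not something your sketch derives. Likewise in (ii), ``comparing the graded dimension of the subalgebra generated by $T'$ and $W^0$ against \eqref{eq:singletchars}'' is a consistency check, not a proof: character matching cannot by itself rule out that the generated subalgebra misses a summand $L(h_{\alpha_{2n+1,1}},c_p)$ while some other states make up the dimension count, and producing a non-zero highest weight vector in each summand is exactly the content of Theorem 3.1 of \cite{A1}. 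You do acknowledge at the end that these steps rest on \cite{A1} and \cite{NT}; with that acknowledgement your proposal is, in effect, the same citation the paper makes, dressed with correct routine verifications --- but as a standalone proof it has a genuine gap at the two places just named, and the appeal to ``exactness'' should be repaired as indicated.
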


\noindent Recall that $h = \alpha_- \del \beta_-$ and that $M$ is the rank two Heisenberg vertex operator algebra generated by $\partial\beta_+$ and $\partial\beta_-$.  Using similar ideas to \cite{A2}, we show:

\begin{prop}
The singlet algebra $\cM(p)$ may be realised as a subalgebra of $\ker_{\mathcal B_p}(h_0)$:
\begin{equation}
 \ker_{\mathcal B_p}(h_0) = \ker_{M}(Q_-)=\ker_{M_+}(Q_-)\otimes M_-=\cM(p)\otimes M_-.
\end{equation}
In particular, $\cM(p)$ is a subalgebra of $\mathcal B_p$.
\end{prop}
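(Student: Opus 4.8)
The plan is to prove the displayed chain of equalities from right to left, treating the two rightmost equalities as routine and concentrating all the work on the identity $\ker_{\mathcal B_p}(h_0)=\ker_M(Q_-)$. The equality $\ker_{M_+}(Q_-)\otimes M_-=\cM(p)\otimes M_-$ is immediate from Lemma \ref{thm:adam}. For $\ker_M(Q_-)=\ker_{M_+}(Q_-)\otimes M_-$, recall that the rank two Heisenberg algebra factorises as $M=M_+\otimes M_-$, where $M_-$ is the rank one Heisenberg vertex operator algebra generated by $\del\beta_-$; since the screening $Q_-$ involves only $\beta_+$, it acts as $Q_-|_{M_+}\otimes\id$, so its kernel on $M$ factorises as claimed.

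For the inclusion $\ker_{\mathcal B_p}(h_0)\subseteq\ker_M(Q_-)$ I would use a charge/momentum argument. From Proposition \ref{prop:Bope}, the generators $e$ and $f$ carry $h_0$-eigenvalues $+1$ and $-1$, while $h$ and $T$ are $h_0$-invariant. Because operator products respect the $h_0$-grading (charges add), every monomial in the modes of $e,f,h,T$ has $h_0$-charge equal to the number of $e$-modes minus the number of $f$-modes. On the other hand, the lattice momenta (the $\mathcal F_\gamma$-grading of $V_D$) of the generators vanish for $h$ and $T$ and are opposite for $e$ and $f$, namely $\mp\tfrac{\alpha_+}{2}(\beta_+-\beta_-)$. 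Hence the total lattice momentum of any monomial equals its $h_0$-charge times $-\tfrac{\alpha_+}{2}(\beta_+-\beta_-)$, so a state lies in $\ker_{\mathcal B_p}(h_0)$ if and only if it has zero lattice momentum, i.e.\ lies in the Heisenberg vacuum module $\mathcal F_0=M$. Therefore $\ker_{\mathcal B_p}(h_0)\subseteq\mathcal B_p\cap M\subseteq\ker_{V_D}(Q_-)\cap M=\ker_M(Q_-)$, the middle inclusion using $\mathcal B_p\subseteq\ker_{V_D}(Q_-)$ from Proposition \ref{prop:DefFields}.

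The reverse inclusion reduces to $\cM(p)\otimes M_-\subseteq\mathcal B_p$, for which it suffices to exhibit generators. First, $\del\beta_-=h/\alpha_-\in\mathcal B_p$, so $M_-\subseteq\mathcal B_p$. Next, comparing the Virasoro fields of Proposition \ref{prop:DefFields} gives $T'=T+\tfrac{1}{2\alpha_-^2}\normord{hh}\in\mathcal B_p$. It remains to place the second singlet generator $W^0$ inside $\mathcal B_p$. The key observation is that in the operator product $e(z)f(w)$ the $\beta_-$-exponents of $e$ and of the lattice part of $f$ cancel, so that the coefficient of $(z-w)^p$, i.e.\ the mode product $e_{(-p-1)}f$ (which has conformal weight $2p-1$ and zero momentum), equals a nonzero multiple of $Q_+\normord{e^{-\alpha_+\beta_+}}=W^0$, plus normally-ordered products of $h$, $T$ and their derivatives. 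Since all such correction terms already lie in $\mathcal B_p$ and $W^0$ is the unique new weight-$(2p-1)$ generator of $\cM(p)\otimes M_-$, solving for $W^0$ yields $W^0\in\mathcal B_p$. Thus $\cM(p)=\langle T',W^0\rangle\subseteq\mathcal B_p$, and as $\cM(p)\subseteq M_+$ commutes with $M_-$, also $\cM(p)\otimes M_-\subseteq\mathcal B_p$; being of zero momentum, it lies in $\ker_{\mathcal B_p}(h_0)$, giving equality.

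I expect the main obstacle to be this last step: verifying by an explicit free field computation that the coefficient of $W^0$ in $e_{(-p-1)}f$ is nonzero for all $p$, and that every competing contribution of weight $2p-1$ is expressible through $h$, $T'$ and $\del\beta_-$ alone. This is precisely where the normalisation built into the definition of $f$ (the factor $(-1)^p(p-1)!/p^{p-1}$ together with $Q_+$) must be exploited, and it generalises Adamovi\'{c}'s $p=3$ calculation; the bookkeeping of contractions between the $Q_+$ contour and $e(z)$ is what makes it tedious rather than conceptually difficult.
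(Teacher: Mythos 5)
Your overall architecture coincides with the paper's: the factorisation $\ker_{M}(Q_-)=\ker_{M_+}(Q_-)\otimes M_-=\cM(p)\otimes M_-$ via Adamovi\'{c}'s lemma, the inclusion $\ker_{\mathcal B_p}(h_0)\subseteq\ker_{M}(Q_-)$ (your momentum-grading argument is equivalent to the paper's observation that $\mathcal B_p$ lies in the kernel of the zero mode of $\del\beta_+-\del\beta_-$, which together with $h_0$ forces zero lattice momentum), and the reduction of the reverse inclusion to showing $W^0\in\mathcal B_p$ with $e_{-p-1}f$ as the vehicle. But at that decisive step you have a genuine gap: the nonvanishing of the $W^0$-coefficient of $e_{-p-1}f$ is asserted and deferred to an unperformed computation, and your cancellation-of-$\beta_-$-exponents heuristic does not establish it. Writing $f=c_p\,Q_+\normord{e^{-\alpha_+(\beta_++\beta_-)/2}}$ and using that $Q_+$ is a derivation of all mode products, one finds $e_{-p-1}f=c_p\bigl(Q_+(e_{-p-1}\normord{e^{-\alpha_+(\beta_++\beta_-)/2}})-(Q_+e)_{-p-1}\normord{e^{-\alpha_+(\beta_++\beta_-)/2}}\bigr)=c_p\bigl(W^0-(Q_+e)_{-p-1}\normord{e^{-\alpha_+(\beta_++\beta_-)/2}}\bigr)$, and the correction term is \emph{not} a polynomial in $h$, $T'$ and derivatives: it carries its own $W^0$-component (with coefficient $\tfrac{1}{2}$, as it turns out), so a priori the $W^0$-contributions could cancel, and nothing in your argument rules this out.

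The paper closes exactly this gap without any coefficient extraction. Setting $v=\frac{2(-1)^p p^{p-1}}{(p-1)!}e_{-p-1}f\in\ker_{\mathcal B_p}(h_0)$, one first checks $Q_+^2\normord{e^{-\alpha_+(\beta_+\pm\beta_-)/2}}=0$ (after one application of $Q_+$ the momenta pair positively with $\alpha_+\beta_+$, so the second OPE is regular). The derivation property then gives $Q_+v=Q_+^2\bigl(e_{-p-1}\normord{e^{-\alpha_+(\beta_++\beta_-)/2}}\bigr)=Q_+^2 W^-=Q_+W^0$, where the middle equality is just the leading OPE term, the momenta of the two exponentials pairing to $p$. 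Hence $v-W^0\in\ker_{M}(Q_+)$, which by the first half of Adamovi\'{c}'s lemma is generated by $T'$ and $\del\beta_-$ and therefore lies in $\mathcal B_p$; thus $W^0=v-(v-W^0)\in\mathcal B_p$, with the nonvanishing you needed obtained for free (the total $W^0$-coefficient in $v$ is exactly $1$). If you insist on your direct route, the "tedious" computation you postpone is genuinely the crux rather than bookkeeping, and carrying it out in a way that controls the $W^0$-content of $(Q_+e)_{-p-1}\normord{e^{-\alpha_+(\beta_++\beta_-)/2}}$ for all $p$ essentially amounts to rediscovering the nilpotency argument above.
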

\begin{proof}
Denote by $k_0$ the zero mode of $\partial\beta_+-\partial\beta_-$. 
Then $\mathcal B_p$ is in the kernel of $k_0$, since all its weak generators $e$, $f$, $h$ and $T$ are.
As before, let $M_\pm$ denote the rank one Heisenberg vertex operator algebra generated by $\partial\beta_\pm$.
Then $\ker_{V_D}(h_0)=V_{D_+}\otimes M_-$, while 
$\ker_{V_{D_+}\otimes M_-}(k_0)=M_+\otimes M_-$ and hence
 $\ker_{\mathcal B_p}(h_0)\subset M$ so that we have the inclusion $\ker_{\mathcal B_p}(h_0)\subset\ker_{M}(Q_-)$. 
For the other inclusion, we note that the second statement of Lemma \ref{thm:adam} implies that $\ker_{M}(Q_-)$ is generated by $T'$, $W^0$ and $\partial\beta_-$ and that the first
statement implies that $\ker_{M}(Q_+)$ is generated by $T'$ and $\partial\beta_-$.  As
$T'$ and $\partial\beta_-$ both have zero $h_0$-eigenvalue, it follows that $\ker_{M}(Q_+)\subset \ker_{\mathcal B_p}(h_0)$.
It remains to show that $W^0\in \ker_{\mathcal B_p}(h_0)$, and since $h_0W^0=0$, this means we have to show that $W^0 \in \mathcal B_p$.
Define\footnote{In what follows, we assume the mode expansion 
$e(z) = \sum_{n \in \ZZ} e_n z^{-n-1}$ familiar in the theory of vertex algebras, even when the conformal dimension of $e$ is not $1$.}
\[
v = \frac{2 (-1)^{p}p^{p-1}}{(p-1)!} e_{-p-1}f = 2  \normord{e^{-\alpha_+(\beta_+-\beta_-)/2}}_{-p-1}Q_+\normord{e^{-\alpha_+(\beta_++\beta_-)/2}} \ \in  \ \ker_{\mathcal B_p}(h_0)
\]
and $g_p(w)$ by $\partial_w^{(p-1)}e^{a(w)}=g_p(w)e^{a(w)}$. Then,
\[
Q_+\normord{e^{-\alpha_+(\beta_+\pm\beta_-)/2}}=\normord{\dfrac{g_p(\alpha_+\beta_+)e^{\alpha_+(\beta_+\mp\beta_-)/2}}{(p-1)!}}
\]
and thus 
\[
Q_+^2 \normord{e^{-\alpha_+(\beta_+\pm\beta_-)/2}} = 0.
\]
Hence,
\begin{equation*}
\begin{split}
Q_+v &= 2 \bigl(Q_+ \normord{e^{-\alpha_+(\beta_+-\beta_-)/2}}_{-p-1}\bigr) \bigl(Q_+\normord{e^{-\alpha_+(\beta_++\beta_-)/2}}\bigr) \\
&= Q_+^2\bigl( \normord{e^{-\alpha_+(\beta_+-\beta_-)/2}}_{-p-1}\normord{e^{-\alpha_+(\beta_++\beta_-)/2}}\bigr) \\
&= Q_+^2  \normord{e^{-\alpha_+\beta_+}} = Q_+ W^0\,.
\end{split}
\end{equation*}
It follows that $v-W^0 \in \ker_{M}(Q_+)\subset \ker_{\mathcal B_p}(h_0)$ and hence $W^0 \in \ker_{\mathcal B_p}(h_0)$.  
\end{proof}

\begin{thm} \label{thm:MCoset}
The singlet algebra $\cM(p)$ and the Heisenberg vertex algebra $M_-$ generated by $\del \beta_-$ are mutually commuting within $\mathcal B_p$.  In other words, they form a Howe pair.
\end{thm}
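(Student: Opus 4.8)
The plan is to establish the two commutant identities $\Com(M_-,\mathcal{B}_p)=\cM(p)$ and $\Com(\cM(p),\mathcal{B}_p)=M_-$; together these exhibit $\cM(p)$ and $M_-$ as mutual commutants, which is exactly the Howe pair condition $\cM(p)=\Com(\Com(\cM(p),\mathcal{B}_p),\mathcal{B}_p)$. The inclusions $\cM(p)\subseteq\Com(M_-,\mathcal{B}_p)$ and $M_-\subseteq\Com(\cM(p),\mathcal{B}_p)$ are immediate from the orthogonality $\beta_+(z)\beta_-(w)\sim0$: the generators $T'$ and $W^0$ of $\cM(p)$ lie in the $\beta_+$-Heisenberg subalgebra $M_+$, while $M_-$ is generated by $\del\beta_-$, so the two families of fields have regular operator product expansions with one another. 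It remains to prove the reverse inclusions.

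For $\Com(M_-,\mathcal{B}_p)\subseteq\cM(p)$, I would argue as follows. A field commuting with $M_-$ commutes in particular with $h_0$, so it lies in $\ker_{\mathcal{B}_p}(h_0)=\cM(p)\otimes M_-$, as identified in the preceding proposition. Writing such a state as $\sum_i a_i\otimes b_i$ with the $a_i\in\cM(p)$ linearly independent, commutativity with all the modes $h_n$, $n\geq0$, forces each $b_i$ into the self-commutant of the rank one Heisenberg algebra $M_-$. Since the only state of $M_-$ annihilated by all of $h_1,h_2,\dots$ is the vacuum, each $b_i$ is a multiple of $\lvert0\rangle$ and the state lies in $\cM(p)$.

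The more delicate inclusion is $\Com(\cM(p),\mathcal{B}_p)\subseteq M_-$, and this is where I expect the real work. Because $\cM(p)$ commutes with $h_0$, the commutant is graded by $h_0$-charge, so I would treat the charge sectors $\mathcal{B}_p^{(m)}$ separately. A commutant state must be annihilated by every nonnegative mode of the generators of $\cM(p)$; in particular it must satisfy $L'_0 v=0$ and $L'_{-1}v=0$, where the $L'_n$ are the modes of $T'$. In the neutral sector $\mathcal{B}_p^{(0)}=\cM(p)\otimes M_-$ the condition $L'_0 v=0$ already suffices: as $\cM(p)$ has a one-dimensional space of $T'$-weight zero (spanned by the vacuum), the $\cM(p)$-factor must be trivial and $v\in\lvert0\rangle\otimes M_-=M_-$. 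For a nonzero charge $m$, every state of $\mathcal{B}_p^{(m)}$ carries a fixed nonzero $\beta_+$-momentum $\gamma_+=-m\alpha_+/2$; indeed, $e$ and $f$ shift this momentum by $\mp\alpha_+/2$, and $\mathcal{B}_p$ lies in the kernel of the zero mode of $\del\beta_+-\del\beta_-$, which ties the $\beta_+$-momentum to the $h_0$-charge. The $\beta_+$-component of $v$ therefore sits in a Fock space of nonzero momentum, on which the translation operator $L'_{-1}$ acts injectively; hence $L'_{-1}v=0$ forces $v=0$. Assembling the charge sectors yields $\Com(\cM(p),\mathcal{B}_p)=M_-$.

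The main obstacle is precisely this vanishing of $\cM(p)$-invariants in the charged sectors. It is tempting to argue that charged states have strictly positive $T'$-weight and so cannot meet the requirement $L'_0 v=0$, but this fails: the ground-state weight $h_{\alpha_{m+1,1}}$ can be a nonpositive integer for suitable $m$ and $p$, so weight-zero charged states do occur. The correct constraint is the stronger translation-invariance $L'_{-1}v=0$, and the crux is therefore the injectivity of $L'_{-1}$ on nonzero-momentum $\beta_+$-Fock spaces --- equivalently, the statement that no nonvacuum highest weight $\cM(p)$-module occurring in $\mathcal{B}_p$ contains a translation-invariant vector. This is the singlet counterpart of the count of vacuum states carried out in the proof of Theorem \ref{thm:triplet}, the essential difference being that the relevant decomposition here runs over the infinitely many charge sectors rather than over a single pair of order-two simple currents.
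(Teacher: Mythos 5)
Your proposal is correct, and its overall skeleton --- reducing everything to the neutral sector $\ker_{\mathcal{B}_p}(h_0)=\cM(p)\otimes M_-$ and then disposing of the charged sectors, which are exactly the paper's $V_n$, $n\neq 0$ --- is the same as the paper's, but your mechanism for killing the charged sectors is genuinely different. The paper works only with the condition $T'_0v=0$: the bottom $T'_0$-eigenvalue of $V_n$ is $\lambda_n=\frac{p}{4}n\bigl(n+\frac{2(p-1)}{p}\bigr)$, strictly positive for $n\neq 0,-1$, and the one problematic sector $V_{-1}$ (where $\lambda_{-1}=-(p-2)/4\leq 0$) is rescued by the observation that the field of minimal $T'_0$-eigenvalue in $\mathcal{B}_p\cap V_{-1}$ is $f$, whose eigenvalue is positive. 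So your remark that the $T'_0$-positivity route ``fails'' is not quite fair: it fails only for the ambient Fock sector $V_{-1}$, not for its intersection with $\mathcal{B}_p$. Your alternative --- using $L'_{-1}v=0$, which is indeed part of the commutant condition since $T'_{(n)}=L'_{n-1}$, and killing all charged sectors uniformly by injectivity of $L'_{-1}$ on nonzero-momentum $\beta_+$-Fock spaces --- is arguably cleaner, as it needs no knowledge of which field of $\mathcal{B}_p\cap V_{-1}$ is minimal. But you assert the injectivity rather than prove it; close that hole as follows: writing $a_n$ for the modes of $\del\beta_+$, one has $[L'_{-1},a_n]=-na_{n-1}$ (the background-charge term $-\alpha_0\,m(m+1)\delta_{m+n,0}/2$ in $[L'_m,a_n]$ vanishes at $m=-1$), so if $L'_{-1}v=0$ and $a_nv=0$ for $n\gg 0$, descending induction gives $a_nv=0$ for all $n\geq 0$, whence $\gamma_+v=a_0v=0$ and $v=0$ when $\gamma_+\neq 0$ --- this is Li's vacuum-like vector argument. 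With that lemma supplied, your neutral-sector step (one-dimensionality of the $T'_0$-weight-zero space of $\cM(p)$) and your treatment of $\Com(M_-,\mathcal{B}_p)$ are sound, and play the role of the paper's concluding appeal to the simplicity of $\cM(p)$ and $M_-$ inside their tensor product.
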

\begin{proof}
Let $\Com(A,B)$ denote the commutant algebra of $A$ as a subalgebra of $B$.
We first show that 
\begin{equation}\label{eq:coms}
\Com(\cM(p),\mathcal B_p) = \Com(\cM(p),\ker_{\mathcal B_p}(h_0))\qquad\text{and}\qquad \Com(M_-,\mathcal B_p) = \Com(M_-,\ker_{\mathcal B_p}(h_0))\,.
 \end{equation}
The second equality is obvious, since every element that commutes with $\partial\beta_-$ must be in the kernel of $h_0$.
The first equation is a little more involved.  For this, we note that every element that commutes with the singlet algebra must be in the kernel of the zero mode of the Virasoro field $T'$:
\[
\Com(\cM(p),\mathcal B_p)\subset \ker_{\mathcal B_p}(T'_0).
\]
We will show that $\ker_{\mathcal B_p}(T'_0) \subset \ker_{\mathcal B_p}(h_0)$, from which the first equation of \eqref{eq:coms} will follow immediately.

Let $V_n$ denote the $M$-module whose primary field is given by $v_n = \normord{e^{-n\alpha_+(\beta_+-\beta_-)/2}}$.  Then, the $V_n$ with $n \in \ZZ$ close under fusion, so we may conclude that
\[
V = \bigoplus_{n\in\mathbb Z} V_n
\]
defines a vertex operator subalgebra of $V_D$.  Moreover, $\mathcal B_p\subset V$ because $\mathcal B_p$ is generated by $e$ and $f$, both of which belong to $V$. 
We look for the fields of $V$ that are annihilated by $T'_0$.  The $T'_0$-eigenvalues of the $v_n$ are given by (see \eqref{eq:weightformula})
\[
\lambda_n = \frac{p}{4} n \left( n + \frac{2(p-1)}{p} \right).
\]
This is positive for $n \neq 0, -1$, hence any element of $V$ that is annihilated by $T'_0$ must live in either $V_0$ or $V_{-1}$.  In fact, $V_{-1}$ can be ruled out when restricting to $\mathcal{B}_p$ because the field of minimal $T'_0$-eigenvalue in $\mathcal{B}_p \cap V_{-1}$ is $f$ (its $T'_0$-eigenvalue is positive). 
We conclude that $\Com(\cM(p),\mathcal B_p)\subset V_0 \cap \mathcal B_p = \ker_{\mathcal B_p}(h_0)$, as required.

Finally, let $A$ and $B$ be two simple vertex operator algebras inside a third $C$, and suppose that $A$ commutes with $B$. Then $A$ and $B$ are a mutually commuting pair inside $A\otimes B$. This follows because an element
$a\otimes b\in A\otimes B$ commutes with $A$ if and only if $a$ is (a multiple of) the identity field $a(z)=I_A(z)$ on $A$ (and similarly for $B$). 
The claim of the theorem now follows from the identification $\ker_{\mathcal B_p}(h_0)=\cM(p)\otimes M_-$ and the fact that both $\cM(p)$ and $M_-$ are simple vertex algebras.
\end{proof}

\noindent We remark that the similar problem of looking for all operators of $\mathcal B_p$ that annihilate $M_-$ leads not only to all operators
of the singlet algebra but also includes the zero-mode $h_0$ of the operator algebra of $M_\beta$.

\section{Branching functions for $\mathcal B_2$ and $\mathcal B_3$} \label{sec:Branch}

This section is an application of the coset constructions resulting in 
Theorems \ref{thm:triplet} and \ref{thm:MCoset}. Irreducible modules
of $\ker_{V_D}(Q_-)$ and $\mathcal B_p$ decompose into modules 
of its mutually commuting subalgebras. Here, we will find these decompositions
at the level of characters when $p=2$ or $p=3$.

For this, we need to identify $\ker_{V_D}(Q_-)$ with certain extended algebras constructed
in \cite{CR1}. The construction of both these extended algebras and their modules relies on the conjecture that fusion respects spectral flow,
a conjecture that is consistent with the Verlinde formula for admissible level $\AKMA{sl}{2}$
\cite{CR1, CR4}. We will first outline some preliminary results concerning $\AKMA{sl}{2}$, formulating as well the conjecture that fusion respects spectral flow. Then, we perform the character
decompositions, using again results of \cite{CR1}.
 
\subsection{$\AKMA{sl}{2}$ at level $k$}

We first fix our notation and conventions for the affine vertex algebra corresponding to $\AKMA{sl}{2}$ at level $k$.
The affine Lie algebra $\AKMA{sl}{2}$ is generated by $h_n$, $e_n$, $f_n$ and $K$, for $n\in\mathbb Z$, with non-zero commutation 
relations\footnote{We follow \cite{R3} here in choosing a basis of $\SLA{sl}{2}$ which is adapted to a triangular decomposition that respects the adjoint defining the real form $\SLA{sl}{2 ; \RR}$.  There is a subtlety here in that the adjoint of the chiral algebra must extend to an adjoint on the simple current extended algebra which must be consistent with the mutual localities of the chiral and extension fields.  For $k=-\tfrac{1}{2}$, it was shown in \cite{R3} that choosing the $\SLA{su}{2}$ adjoint leads to a non-associative extended algebra whereas choosing the $\SLA{sl}{2;\RR}$ adjoint leads to the algebra of the $\beta \gamma$ ghosts.}
\begin{equation}
\comm{h_m}{e_n} = 2 e_{m+n}, \ \ \comm{h_m}{h_n} = 2m \delta_{m+n,0} K, \ \ \comm{e_m}{f_n} = -h_{m+n} - m \delta_{m+n,0} K, \ \ \comm{h_m}{f_n} = -2 f_{m+n}.
\end{equation}
We fix $K$ to act as multiplication by a fixed number $k$, called the level, on modules.  The conformal structure for $k \neq -2$ is given by the standard Sugawara construction:
\begin{equation}
L_n = \frac{1}{2 \brac{k+2}} \sum_{r \in \ZZ} \normord{\frac{1}{2} h_r h_{n-r} - e_r f_{n-r} - f_r e_{n-r}}.
\end{equation}
The central charge is $c = 3k/(k+2)$.  Of course, the $h_n$ generate a copy of the Heisenberg algebra $\AKMA{gl}{1}$.

Recall the family of spectral flow automorphisms, parameterized by $s\in\mathbb Z$, for $\AKMA{sl}{2}$ at level $k$:
\begin{equation}
\sigma_s(h_n) = h_n-\delta_{n,0}sk,\qquad
\sigma_s(e_n) = e_{n-s},\qquad  
\sigma_s(f_n) = f_{n+s},\qquad  
\sigma_s(L_0) = L_0-\frac{s}{2}h_0+\frac{s^2}{4}k.
\end{equation}
When $V$ is a level $k$ $\AKMA{sl}{2}$-module, we may define another level $k$ $\AKMA{sl}{2}$-module $V^s$ as follows.
 To any element $v\in V$ we associate the twisted element $\sigma_s^*(v)$. As a set, $V^s$ is given by all these twisted elements
and is isomorphic to $V$ as a vector space.  
Then, the action of $X\in\AKMA{sl}{2}$ on $V^s$ is defined by
\begin{equation}
X\sigma_s^*(v) = \sigma_s^*(\sigma^{-1}_s(X)v)\,.
\end{equation}
Note that $V^s$ and $V$ are not usually isomorphic as $\AKMA{sl}{2}$-modules (unless $s=0$).  We thus find
\begin{equation}
h_0\sigma_s^*(\ket{0}) = sk \sigma_s^*(\ket{0}) \quad\text{and}\quad 
L_0\sigma_s^*(\ket{0}) = \frac{s^2}{4}k \sigma_s^*(\ket{0}),
\end{equation}
where $\ket{0}$ denotes the $\AKMA{sl}{2}$ vacuum state.

We are looking for states that will correspond to the generators $W^+$, $W^0$, $W^-$ of $\cW(p)$.  For $p=2$, the appropriate $\AKMA{sl}{2}$ level is $k=-1/2$ and for $p=3$, it is $k=-4/3$.

\begin{prop}
When $k=-1/2$, the states
$W_2^+=\sigma^*_4(e_{-1}\ket{0})$ and $W_2^- =\sigma^*_{-4}(f_{-1}\ket{0})$ both then have conformal dimension three and they are vacuum states for the $\AKMA{gl}{1}$-subalgebra generated by $h$.

\noindent When $k=-4/3$, 
$W_3^+=\sigma^*_3(e_{-1}e_{-1}\ket{0})$ and $W_3^-=\sigma^*_{-3}(f_{-1}f_{-1}\ket{0})$
are vacuum states for the 
$\AKMA{gl}{1}$-subalgebra generated by $h$ of conformal dimension five.
\end{prop}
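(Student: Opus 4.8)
The plan is to reduce every assertion to an elementary computation in the vacuum module of $\AKMA{sl}{2}$ by exploiting the defining relation $X\sigma_s^*(v) = \sigma_s^*(\sigma_s^{-1}(X)v)$. The point is that $e_{-1}\ket{0}$, $f_{-1}\ket{0}$, $e_{-1}e_{-1}\ket{0}$ and $f_{-1}f_{-1}\ket{0}$ are all simultaneous eigenvectors of $L_0$ and $h_0$ that are annihilated by the positive modes of every current, so applying a mode of $\AKMA{sl}{2}$ to a spectral-flowed state only requires knowing how the inverse spectral flow acts on that single mode. Since $\sigma_s^{-1} = \sigma_{-s}$, sending $s \to -s$ in the formulas quoted for $\sigma_s$ gives $\sigma_s^{-1}(L_0) = L_0 + \tfrac{s}{2}h_0 + \tfrac{s^2}{4}k$ and $\sigma_s^{-1}(h_n) = h_n + \delta_{n,0}\,sk$, and these two identities carry the whole argument.

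First I would compute the conformal dimensions. For $W_2^+$ (so $s=4$, $k=-\tfrac12$) one applies $\sigma_4^{-1}(L_0)$ to $e_{-1}\ket{0}$, which has $L_0$-eigenvalue $1$ and $h_0$-eigenvalue $2$ (the latter from $\comm{h_0}{e_{-1}} = 2e_{-1}$), obtaining $1 + \tfrac{4}{2}\cdot 2 + \tfrac{16}{4}\cdot(-\tfrac12) = 3$. The remaining three states are handled identically: one reads off the $L_0$- and $h_0$-eigenvalues of the underlying vacuum-module vector (using $\comm{h_0}{f_{-1}} = -2f_{-1}$, and noting that $e_{-1}e_{-1}\ket{0}$ and $f_{-1}f_{-1}\ket{0}$ have $L_0$-eigenvalue $2$ and $h_0$-charge $\pm4$) and substitutes into $\sigma_{s}^{-1}(L_0)$ with $s=-4,3,-3$ respectively, each time producing $3$ for $p=2$ and $5$ for $p=3$. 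It is worth highlighting that these values are exactly the weight $2p-1$ carried by the triplet generators $W^\pm$, which is what makes these states the natural candidates for the coset realisation.

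Next I would verify the Heisenberg vacuum condition, namely $h_n W = 0$ for all $n \geq 0$, treating $n>0$ and $n=0$ separately via $\sigma_s^{-1}(h_n) = h_n + \delta_{n,0}\,sk$. For $n>0$ the shift is absent and one checks that $h_n$ annihilates the underlying state; using $\comm{h_n}{e_m} = 2e_{n+m}$ together with $\comm{e_m}{e_{m'}} = 0$ (and the analogues for $f$), everything collapses to $e_j\ket{0} = f_j\ket{0} = 0$ for $j\geq 0$, which holds in the vacuum module. For $n=0$ the shift $sk$ enters, and the claim becomes the cancellation $\text{charge} + sk = 0$: for $W_2^+$ this reads $2 + 4\cdot(-\tfrac12) = 0$, for $W_3^+$ it reads $4 + 3\cdot(-\tfrac43) = 0$, and the two $f$-states cancel in the same way with $s$ and the charge both negated. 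This cancellation is precisely why the states carry vanishing Heisenberg charge and qualify as genuine vacua rather than merely highest-weight vectors.

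I expect no serious obstacle here: the entire computation is bookkeeping with the spectral-flow automorphisms. The one place demanding care is fixing the direction of the inverse spectral flow and correctly recording the $h_0$-charge of the two-mode states $e_{-1}e_{-1}\ket{0}$ and $f_{-1}f_{-1}\ket{0}$, since a sign error there would simultaneously spoil the dimension count and the charge cancellation; keeping the level-dependent shift $sk$ explicit makes both consistency checks transparent.
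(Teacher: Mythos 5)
Your proposal is correct and takes essentially the same route as the paper's proof, which likewise applies $X\sigma_s^*(v)=\sigma_s^*(\sigma_s^{-1}(X)v)$ to compute $h_0 W_2^+ = \sigma^*_4\bigl((h_0+4k)e_{-1}\ket{0}\bigr) = (2+4k)W_2^+ = 0$ and $L_0 W_2^+ = \sigma^*_4\bigl((L_0+2h_0+4k)e_{-1}\ket{0}\bigr) = 3W_2^+$, observes $h_n W_2^+ = 0$ for $n>0$, and disposes of the remaining three states by the identical calculation. Your bookkeeping, including the charge cancellations $2+4k=0$ at $k=-\tfrac{1}{2}$ and $4+3k=0$ at $k=-\tfrac{4}{3}$ and the dimension counts $3$ and $5$, matches the paper exactly.
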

\begin{proof}
$W_2^+$ is invariant under $\AKMA{gl}{1}$ because
\begin{equation}
h_{0}W^+ = \sigma^*_4((h_0 + 4k) e_{-1} \ket{0}) = (4k+2)W^+ = 0
\end{equation}
and, obviously, $h_nW^+=0$ for $n>0$.
Further, its conformal dimension is $3$ because
\begin{equation}
L_0W^+ = \sigma^*_4((L_0 + 2h_0 + 4k) e_{-1} \ket{0}) = \Bigl(1 + 4 + 4k\Bigr)W^+ = 3W^+\, .
\end{equation}
An analogous argument shows that $W_3^-$ is invariant under $\AKMA{gl}{1}$ and likewise has conformal dimension $3$. 
The argument for the case $k=-4/3$ is identical.
\end{proof}

An important conjecture for the representation theory of affine vertex algebras is
that fusion rules are compatible with spectral flow automorphisms. In the case of interest to us, this is the following:

\begin{conj} \label{conj:SF}
Let $V$ and $W$ be (admissible) level $k$ modules of $\AKMA{sl}{2}$, where $k\in\{-1/2,-4/3\}$. Then,
\begin{equation}
V^s\times W^t=(V\times W)^{s+t}.
\end{equation}
\end{conj}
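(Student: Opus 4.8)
The plan is to reduce the flow-covariance of fusion to a single statement about one invertible module, and then to exploit associativity and commutativity of the fusion product. Write $\mathcal A$ for the vacuum module of $\AKMA{sl}{2}$ at level $k$ and set $\mathcal A^s := \sigma_s^*(\mathcal A)$, the $s$-fold spectral flow of the vacuum. The theorem would follow from two assertions: (i) the $\mathcal A^s$ are simple currents with $\mathcal A^s \times \mathcal A^t \cong \mathcal A^{s+t}$, and (ii) spectral flow is implemented by fusion, $\mathcal A^s \times V \cong V^s$ naturally in $V$. Granting these,
\[
V^s \times W^t \cong (\mathcal A^s \times V) \times (\mathcal A^t \times W) \cong \mathcal A^{s+t} \times (V \times W) \cong (V \times W)^{s+t},
\]
which is the claim.

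To establish (ii)---which contains (i) as the special case $V=\mathcal A$---I would argue at the level of intertwining operators. Since $\sigma_s$ is an automorphism of the loop algebra that acts on the generating fields by relabelling modes, it should carry an intertwining operator of type $\binom{V\times W}{V\,W}$ to one of type $\binom{(V\times W)^s}{V^s\,W}$, after tracking how the flow twists the conformal grading and the $\AKMA{sl}{2}$-action. This identifies $V^s \times W \cong (V \times W)^s$; combined with commutativity of fusion it yields covariance in either slot, and the full theorem then follows by flowing $V$ by $s$ and $W$ by $t$ in turn. At the two relevant levels there is extra leverage: both $k=-\tfrac{1}{2}$ and $k=-\tfrac{4}{3}$ admit explicit free-field realisations in which spectral flow is simply a translation of the $\AKMA{gl}{1}$ charge, $\mathcal A^s$ is the corresponding Fock/lattice module, and fusion is manifestly additive in this charge; the remaining task is then to descend that additivity from the free-field algebra to $\AKMA{sl}{2}$ itself.

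The genuine obstacle---and the reason the statement is only a conjecture---is that at these admissible, non-$C_2$-cofinite levels the relevant module category is neither semisimple nor known to be rigid, so the fusion product is not a priori well defined, associativity and commutativity are not automatic, and the phrase ``simple current'' requires care. The honest substitute for a proof, and the precise sense in which the conjecture is consistent with the Verlinde formula, is the following check: compute the standard-module characters of $V^s$, $W^t$ and $(V\times W)^{s+t}$, record their modular $S$-transforms, and verify that the Verlinde formula of \cite{CR1,CR4} returns flow-covariant Grothendieck fusion coefficients. Spectral flow shifts the $S$-kernel by a factor linear in the flow parameter, so the Verlinde computation does reproduce $V^s \times W^t = (V\times W)^{s+t}$ numerically; upgrading this agreement of Grothendieck classes to an honest isomorphism of modules is exactly what a rigorous proof would still have to supply.
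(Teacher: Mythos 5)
You have diagnosed the situation correctly, and it is worth stating plainly: the paper does not prove this statement at all --- it is presented as Conjecture \ref{conj:SF} precisely because, at the admissible levels $k=-1/2$ and $k=-4/3$, the relevant module categories are neither semisimple nor $C_2$-cofinite, and no fusion-product formalism with established associativity and commutativity was available. The only support the paper offers is exactly what your final paragraph identifies: the partial fusion computations of \cite{G,R1} and the Verlinde-type formula for the Grothendieck ring of characters \cite{CR1,CR4}, which exhibit flow-covariance at the level of characters but do not upgrade to an isomorphism of modules. In that sense your assessment of what a rigorous proof would still have to supply coincides with the paper's own position.

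Two cautions about the strategy you sketch in the first two paragraphs, however. First, your assertion (ii), $\mathcal A^s \fuse V \cong V^s$, is not a lemma from which the conjecture follows --- it \emph{is} the conjecture, specialised to the case where one factor is the (flowed) vacuum module $\IrrMod{0}^s$. The reduction via associativity and commutativity is structurally sound, but it presupposes exactly the tensor-categorical properties of $\fuse$ (existence, associativity, commutativity, and that fusing with $\mathcal A^s$ is exact or at least well behaved) that are unavailable at these levels, so it trades the general statement for a special case of itself plus unproven category-theoretic infrastructure. Second, the intertwining-operator argument --- transporting an intertwining operator from $V \otimes W$ to $V \fuse W$ along $\sigma_s$ to obtain one targeting $(V \fuse W)^s$ --- produces maps between candidate fusion products, but it identifies $V^s \fuse W$ with $(V \fuse W)^s$ only if the fusion product exists and its universal property is respected by the twisting, which again begs the question; this is why \cite{G,R1} could only verify instances of flow-covariance within the reach of the Nahm--Gaberdiel--Kausch algorithm rather than derive it. Your conclusion is nonetheless the right one: the honest content available is the character/Grothendieck-level consistency check, and that is all the paper claims.
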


\noindent The fusion rules at these levels have been partially computed in \cite{G,R1} and a Verlinde formula for the Grothendieck ring of characters has been evaluated in \cite{CR1}. In both instances, the results strongly support the conjecture, as do the results known for more general admissible levels \cite{CR4}.

We denote the vacuum module at level $k$ by $\IrrMod{0}$.
Since $W^\pm_2\in \IrrMod{0}^{\pm 4}$, we are interested in an extension generated by these
two modules. 
Assuming Conjecture \ref{conj:SF}, the modules $\IrrMod{0}^{\pm 4}$ generate a free abelian group of simple currents.  Combining the fusion orbit of these simple currents on the vacuum module $\IrrMod{0}$ of level $k=-1/2$, we obtain the module
\begin{equation}
\mathcal A_2 = \bigoplus_{m\in\mathbb Z}\IrrMod{0}^{4m}
\end{equation}
which constitutes the vacuum module of a simple current extension of $\AKMA{sl}{2}_{-1/2}$ \cite{CR1}.  We will also denote the corresponding extended algebra by $\mathcal A_2$.  The fusion orbits through the other $\AKMA{sl}{2}_{-1/2}$-modules similarly define (untwisted) $\mathcal A_2$-modules when the conformal dimensions of the states in the given orbit all differ by integers.  
Similarly, for $k=-4/3$, the module 
\begin{equation}
\mathcal A_3 = \bigoplus_{m\in\mathbb Z}\IrrMod{0}^{3m}
\end{equation}
is the module of a simple current extension of $\AKMA{sl}{2}_{-4/3}$
and we also call the corresponding extended algebra $\mathcal A_3$.
This extended algebra has been constructed in \cite{A3}.

Finally, recall \cite{AM3,G,R1} that for $k=-1/2$ or $-4/3$, hence $c=-1$ or $-6$ (respectively), 
there is a family of representations $\TypMod{\lambda}^s$ of $\AKMA{sl}{2}_k$, called the \emph{standard modules}\footnote{The name standard refers to the fact that these
modules provide the generic family of representations.} in \cite{CR5}, 
which are labelled by a weight ($h_0$-eigenvalue) $\lambda \in \RR / 2 \ZZ$
and a spectral flow index $s \in \ZZ$.
For \(s=0\) the $\TypMod{\lambda}$ are examples of \emph{relaxed} highest weight
modules \cite{FSST}. They are affinisations of certain 
$\SLA{sl}{2}$-modules that are neither highest nor lowest weight, but instead have weights $\lambda + 2n$, $n \in \ZZ$, each with multiplicity one.
Generically, the standard modules are irreducible and their characters are given by
\begin{equation} \label{ch:TypSL2}
\ch{\TypMod{\lambda}^s} = \tr_{\TypMod{\lambda}^s} z^{h_0} q^{L_0 - c/24} = \frac{1}{\func{\eta}{q}^2} \sum_{n \in \ZZ} z^{2n + \lambda + ks} q^{s(2n + \lambda + ks/2)/2}.
\end{equation}
The non-generic case corresponds to $\lambda = \pm k \bmod{2}$, in which case the character formula \eqref{ch:TypSL2} still holds, but the modules become reducible but
indecomposable.  The irreducible quotient modules at these non-generic parameters have characters which may be expressed as infinite (but convergent) linear combinations of the non-generic $\TypMod{\lambda}^s$ characters.  We will detail this in the following as necessary.
Modules of the extended algebras are obtained by combining the appropriate spectral flow orbits.

\subsection{Branching functions for $\mathcal B_2$ and $\AKMA{sl}{2}$ at level $k=-1/2$}

In this subsection, we consider the case $p=2$ and its relation to $\AKMA{sl}{2}$ at level $k=-1/2$;
for a review of the representation theory of the latter, see \cite{CR5}.
The standard modules $\TypMod{\lambda}^s$, 
 with spectral flow index $s\in\ZZ$ and weight label $\lambda\in\RR/2\ZZ$ are
 irreducible for $\lambda\neq\pm 1/2\bmod{2}$.
There are also non-standard irreducible modules $\IrrMod{\mu}^s$, with $\mu \in \set{0,1}$ and $s \in \ZZ$.  When $s=0$ and $s=1$, the non-standard irreducibles are highest weight modules.  $\IrrMod{0}$ is the vacuum module.

When $\lambda=\pm 1/2$, the standard modules are indecomposable of length two with non-standard irreducibles for composition factors:
\begin{equation}
\begin{split}
0 \longrightarrow \IrrMod{1}^{s+1} \longrightarrow \TypMod{1/2,+}^s 
\longrightarrow \IrrMod{0}^{s-1}  \longrightarrow 0,\qquad
0 \longrightarrow \IrrMod{0}^{s+1} \longrightarrow \TypMod{-1/2,+}^s 
\longrightarrow \IrrMod{1}^{s-1}  \longrightarrow 0.
\end{split}
\end{equation}
Here the subindex $+$ indicates that $\TypMod{\pm 1/2,+}$
possesses a highest weight vector. The conjugate modules $\TypMod{\mp 1/2,-}$ are also indecomposable and are described by similar short exact sequences. 

\subsubsection{Branching functions and the singlet theory}

We now verify explicitly that the $\cM(2)$-characters are precisely the branching functions obtained by decomposing irreducible $\AKMA{sl}{2}_{-1/2}$-characters into irreducible $\AKMA{gl}{1}$-characters.  Theorem \ref{thm:MCoset} in fact guarantees that we can do this for $\FS{1}$-characters.  However, $\FS{1}$ is the $\beta \gamma$ ghost system which is the order $2$ simple current extension of $\AKMA{sl}{2}_{-1/2}$ by $\IrrMod{1}$ \cite{R3}, an irreducible module whose weights ($h_0$-eigenvalue) are odd.  It follows that any $\FS{1}$-module decomposes into a direct sum of two $\AKMA{sl}{2}_{-1/2}$-modules whose weights (mod $2$) differ by $1$.  As $\AKMA{gl}{1}$-modules have constant weight, we may conclude that the branching functions for the decomposition of $\AKMA{sl}{2}_{-1/2}$-characters into $\AKMA{gl}{1}$-characters will indeed be $\cM(2)$-characters.

To check this, we first note that the $\AKMA{gl}{1}$ subalgebra generated by $h$ has lorentzian signature. Denoting its irreducible modules by $\mathsf{F}_{\lambda}$, where $\lambda$ is its common weight, the characters have the form
\begin{equation}
 \ch{\mathsf{F}_{\lambda}} = \frac{z^\lambda q^{-\lambda^2/2}}{\func{\eta}{q}},
\end{equation}
with $z$ keeping track of the weight and $q$ the conformal dimension.
Next, we recall from Section \ref{sec:Screening} that there are $\cM(2)$-modules $\mathcal F_{\mu}$ (of central charge $-2$) which are generically irreducible, generic now meaning that $\mu \notin \ZZ$, whose characters have the form
\begin{equation} \label{ch:TypSing}
\ch{\mathcal F_{\mu}} = \frac{q^{(\mu - \alpha_0/2)^2/2}}{\func{\eta}{q}}.
\end{equation}
Here, we recall that $\alpha_0 = 1$ for $p=2$.  We now have the following character decomposition, realising the generic singlet characters as branching functions of the standard $\AKMA{sl}{2}_{-1/2}$ characters:
\begin{equation} \label{eqn:TypBranch1}
\ch{\TypMod{\lambda}^s} = \sum_{n \in \ZZ} \frac{z^{2n+\lambda-s/2} q^{-(2n+\lambda-s/2)^2/2}}{\func{\eta}{q}} \cdot \frac{q^{(2n+\lambda)^2/2}}{\func{\eta}{q}} = \sum_{n \in \ZZ}\ch{\mathsf{F}_{2n+\lambda-s/2}}\cdot \ch{\mathcal F_{2n+\lambda+1/2}}.
\end{equation}
It is worth remarking that the branching functions (the $\cM(2)$-characters) do not depend upon the spectral flow index $s$.

The computations for the non-generic irreducible singlet characters may be detailed explicitly, but they follow more easily (and more elegantly) from the simple derivation above by noting that the non-generic irreducible characters may be written as infinite linear combinations of standard characters \eqref{ch:TypSL2} \cite{CR1}:
\begin{equation}
\ch{\IrrMod{\lambda}^s} = \sum_{\ell = 0}^{\infty} \brac{-1}^{\ell} \ch{\TypMod{\lambda + \ell + 1/2}^{2 \ell + s + 1}} \qquad \text{($\lambda = 0,1$).}
\end{equation}
Applying \eqref{eqn:TypBranch1}, we therefore obtain
\begin{equation}
\ch{\IrrMod{\lambda}^s} = \sum_{n \in \ZZ} \ch{\mathsf{F}_{2n+\lambda-s/2}} \sum_{\ell = 0}^{\infty} \brac{-1}^{\ell} \ch{\mathcal F_{2n+\lambda+\ell+1}}
\end{equation}
in which we recognise, using \eqref{eq:singletchars} and \eqref{ch:TypSing}, the sum over $\ell$ as a non-generic $\cM(2)$-character:
\begin{equation} \label{ch:p=2Irr}
\ch{\IrrMod{\lambda}^s} = \sum_{n \in \ZZ} \ch{\mathsf{F}_{2n+\lambda-s/2}} \cdot \ch{M_{2n+\lambda+1 , 1}}.
\end{equation}
We summarize this as

\begin{prop} The characters of the irreducible $\AKMA{sl}{2}$-modules at level $k=-1/2$ decompose into
$\AKMA{gl}{1}$ and $\cM(2)$ characters as follows:
\begin{equation}
\ch{\TypMod{\lambda}^s} = \sum_{n \in \ZZ}\ch{\mathsf{F}_{2n+\lambda-s/2}}\cdot \ch{\mathcal F_{2n+\lambda+1/2}},\qquad
\ch{\IrrMod{\lambda}^s} = \sum_{n \in \ZZ} \ch{\mathsf{F}_{2n+\lambda-s/2}} \cdot \ch{M_{2n+\lambda+1 , 1}}.
\end{equation}
The first decomposition also describes that of the reducible standard $\AKMA{sl}{2}$-modules.
\end{prop}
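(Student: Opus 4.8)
The plan is to prove both decompositions directly at the level of $q$-series, establishing the standard case first and then bootstrapping to the irreducible case via the known resolution of the non-generic characters.

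For the standard modules I would set $k=-1/2$ in \eqref{ch:TypSL2} and exhibit the $n$-th summand as a product of one lorentzian Heisenberg character and one generic singlet character. Writing $a=2n+\lambda$, the $z$-exponent $a-s/2$ of this summand matches $\ch{\mathsf{F}_{2n+\lambda-s/2}}$ on the nose, so the content is the $q$-exponent: the standard formula gives $\tfrac{1}{2}s\brac{a-s/4}$, which must equal $-\tfrac12\brac{a-s/2}^2+\tfrac12 a^2$, the sum of the exponents supplied by $\ch{\mathsf{F}_{2n+\lambda-s/2}}$ and by $\ch{\mathcal{F}_{2n+\lambda+1/2}}$ (recall $\alpha_0=1$ here, so \eqref{ch:TypSing} has $q$-exponent $(\mu-\tfrac12)^2/2$). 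A single completion of the square verifies this identity, which is \eqref{eqn:TypBranch1}; the two $\eta(q)^{-1}$ factors then combine into the $\eta(q)^{-2}$ of \eqref{ch:TypSL2}, establishing the first decomposition. Since \eqref{ch:TypSL2} is asserted to hold verbatim at the non-generic points $\lambda=\pm\tfrac12$ where $\TypMod{\lambda}^s$ ceases to be irreducible, the very same computation proves the final sentence about reducible standard modules with no additional argument.

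For the irreducible modules $\IrrMod{\lambda}^s$ ($\lambda\in\{0,1\}$) I would begin from the alternating infinite-sum expansion $\ch{\IrrMod{\lambda}^s}=\sum_{\ell\geq0}\brac{-1}^{\ell}\ch{\TypMod{\lambda+\ell+1/2}^{2\ell+s+1}}$ recorded above from \cite{CR1}, and substitute the first decomposition into each standard summand. The crucial observation to check is that, on inserting $\mu=\lambda+\ell+\tfrac12$ and $t=2\ell+s+1$ into \eqref{eqn:TypBranch1}, the Heisenberg label $2m+\mu-t/2$ collapses to $2m+\lambda-s/2$, \emph{independent of the resolution index} $\ell$, whereas the singlet label becomes $2m+\lambda+\ell+1$ and retains its $\ell$-dependence. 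This is precisely what permits the resulting double sum to be written as the single Heisenberg factor $\ch{\mathsf{F}_{2n+\lambda-s/2}}$ times the inner sum $\sum_{\ell\geq0}\brac{-1}^{\ell}\ch{\mathcal{F}_{2n+\lambda+\ell+1}}$. The last step is to identify this inner sum: comparing its $q$-exponents $\brac{2\brac{2n+\lambda+\ell+1}-1}^2/8$ with the explicit theta-like series \eqref{eq:singletchars} at $p=2$, $s=1$ and $r=2n+\lambda+1$ shows that the even and odd $\ell$ terms reproduce exactly the two families of $q$-powers in $\ch{M_{2n+\lambda+1,1}}$, giving \eqref{ch:p=2Irr}.

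The remaining arguments are routine bookkeeping; the only point demanding care is the interchange of the semi-infinite $\ell$-sum with the bi-infinite $n$-sum in the irreducible case. Because the conformal weights of the standard summands in the resolution grow with $\ell$, every series in sight is an honest convergent $q$-expansion and the rearrangement is legitimate, so no analytic subtlety beyond absolute convergence arises. I therefore expect the main obstacle to be nothing deeper than keeping the index shifts in the resolution and in \eqref{eqn:TypBranch1} mutually consistent, so that the $\ell$-dependence cancels exactly in the Heisenberg label while surviving, as it must, in the singlet label.
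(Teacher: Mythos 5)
Your proposal is correct and takes essentially the same route as the paper's proof: the first decomposition is the completing-the-square verification of \eqref{eqn:TypBranch1} (valid verbatim at $\lambda=\pm 1/2$ since \eqref{ch:TypSL2} still holds there, settling the reducible case), and the second substitutes it into the resolution $\ch{\IrrMod{\lambda}^s}=\sum_{\ell\geq 0}(-1)^{\ell}\ch{\TypMod{\lambda+\ell+1/2}^{2\ell+s+1}}$ from \cite{CR1}, exploiting exactly the $\ell$-independent collapse of the Heisenberg label and identifying the inner alternating sum with $\ch{M_{2n+\lambda+1,1}}$ via \eqref{eq:singletchars} and \eqref{ch:TypSing}. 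Your explicit attention to the sum interchange and index bookkeeping only makes precise what the paper leaves implicit.
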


$\mathcal B_2$, the $\beta\gamma$ ghost system, being a simple current extension of $\AKMA{sl}{2}$
at level $k=-1/2$, has modules $\mathbb L_0^s$ and $\mathbb E_\lambda^s$, for 
$\lambda \in \RR / \ZZ$ and $s \in \ZZ$.  The latter are irreducible unless $\lambda = 1/2 \bmod{2}$.  As $\AKMA{sl}{2}$-modules, they decompose as
\begin{equation}
\mathbb L_0^s=\IrrMod{0}^s\oplus \IrrMod{1}^s,\qquad
\mathbb E_\lambda^s= \TypMod{\lambda}^s\oplus\TypMod{\lambda+1}^s,
\end{equation}
hence we obtain:

\begin{prop} The characters of the irreducible $\mathcal B_2$-modules decompose into
$\AKMA{gl}{1}$ and $\cM(2)$ characters as follows:
\begin{equation}
\ch{\mathbb E_\lambda^s} = \sum_{n \in \ZZ}\ch{\mathsf{F}_{n+\lambda-s/2}}\cdot \ch{\mathcal F_{n+\lambda+1/2}},\qquad
\ch{\mathbb L_0^s} = \sum_{n \in \ZZ} \ch{\mathsf{F}_{n-s/2}} \cdot \ch{M_{n+1 , 1}}.
\end{equation}
The first decomposition also describes that of the reducible $\mathcal B_2$-modules $\mathbb E_{1/2}^s$.
\end{prop}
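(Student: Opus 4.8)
The plan is to reduce the statement directly to the $\AKMA{sl}{2}_{-1/2}$ character decompositions of the previous proposition, exploiting the fact---recorded immediately above the statement---that every irreducible $\mathcal B_2$-module is a direct sum of two level $-1/2$ $\AKMA{sl}{2}$-modules. Explicitly, I would begin from
\[
\mathbb E_\lambda^s = \TypMod{\lambda}^s \oplus \TypMod{\lambda+1}^s, \qquad \mathbb L_0^s = \IrrMod{0}^s \oplus \IrrMod{1}^s,
\]
so that, by additivity of characters, $\ch{\mathbb E_\lambda^s}$ and $\ch{\mathbb L_0^s}$ are the sums of the corresponding pairs of $\AKMA{sl}{2}$-characters. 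Substituting the formulas $\ch{\TypMod{\mu}^s} = \sum_{n \in \ZZ}\ch{\mathsf{F}_{2n+\mu-s/2}}\,\ch{\mathcal F_{2n+\mu+1/2}}$ and $\ch{\IrrMod{\mu}^s} = \sum_{n \in \ZZ}\ch{\mathsf{F}_{2n+\mu-s/2}}\,\ch{M_{2n+\mu+1,1}}$ from the preceding proposition then presents each of $\ch{\mathbb E_\lambda^s}$ and $\ch{\mathbb L_0^s}$ as a sum of two series indexed by $n \in \ZZ$.

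The decisive step is to fuse each such pair of series into a single sum, passing from spacing two to spacing one in the summation index. For $\mathbb E_\lambda^s$, the $\TypMod{\lambda}^s$-series contributes the even-offset terms $\ch{\mathsf{F}_{2n+\lambda-s/2}}\,\ch{\mathcal F_{2n+\lambda+1/2}}$, while the $\TypMod{\lambda+1}^s$-series contributes the odd-offset terms $\ch{\mathsf{F}_{2n+1+\lambda-s/2}}\,\ch{\mathcal F_{2n+1+\lambda+1/2}}$; setting $m=2n$ in the first and $m=2n+1$ in the second interleaves them into $\sum_{m \in \ZZ}\ch{\mathsf{F}_{m+\lambda-s/2}}\,\ch{\mathcal F_{m+\lambda+1/2}}$, which is the asserted formula. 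Exactly the same even/odd interleaving, now matching the $M_{2n+1,1}$ labels of $\ch{\IrrMod{0}^s}$ against the $M_{2n+2,1}$ labels of $\ch{\IrrMod{1}^s}$, collapses the two series for $\mathbb L_0^s$ into $\sum_{m \in \ZZ}\ch{\mathsf{F}_{m-s/2}}\,\ch{M_{m+1,1}}$.

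I do not expect a genuine obstacle here: the content is entirely the bookkeeping of this reindexing, the one point requiring care being that within each interleaved series the Heisenberg label and the singlet label shift in lockstep---their difference staying fixed at $s/2+1/2$ for $\mathbb E_\lambda^s$, and at the corresponding constant for $\mathbb L_0^s$---so that the even and odd families really do amalgamate into a single clean sum. I would confirm this by a term-by-term comparison. Finally, the remark about the reducible modules $\mathbb E_{1/2}^s$ needs no extra work: the character of an indecomposable module equals the sum of the characters of its composition factors, so the formula for $\ch{\mathbb E_\lambda^s}$ holds verbatim at $\lambda=1/2$, where $\TypMod{1/2}^s$ and $\TypMod{3/2}^s$ become reducible.
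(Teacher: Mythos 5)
Your proposal is correct and matches the paper's own (largely implicit) argument: the paper states the decompositions $\mathbb L_0^s=\IrrMod{0}^s\oplus \IrrMod{1}^s$ and $\mathbb E_\lambda^s= \TypMod{\lambda}^s\oplus\TypMod{\lambda+1}^s$ and deduces the proposition directly (``hence we obtain'') from the preceding $\AKMA{sl}{2}_{-1/2}$ branching formulas, exactly the even/odd interleaving of the stride-two sums that you spell out. Your added care about the lockstep shift of the Heisenberg and singlet labels, and the composition-factor justification for $\mathbb E_{1/2}^s$, simply makes explicit what the paper leaves to the reader.
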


\noindent We remark that such a module decomposition was guaranteed by Theorem \ref{thm:MCoset}.

\subsubsection{Branching functions and the triplet theory}

We have seen that the candidate states for the triplet generators $W^{\pm}$ do not correspond to fields of the affine algebra but instead belong to an extended algebra that we have denoted by $\mathcal A_2$. The fusion orbits through the $\TypMod{\lambda}^s$ give rise to (untwisted) extended algebra modules when the weight $\lambda$ has the form $j/2$, for $j \in \ZZ$.  We therefore obtain a family parametrised by $j=0,1,2,3$ and a spectral flow index $r=0,1,2,3$ \cite{CR1}.  When $j$ is even, the resulting $\mathcal A_2$-module is irreducible, whereas those with $j$ odd are reducible but indecomposable.  Their characters take the form
\begin{equation} \label{eqn:p=2TypDecomp}
 \sum_{s\in 4\ZZ} \ch{\TypMod{j/2}^{r+s}} = \sum_{m\in\ZZ}  \ch{\mathsf{F}_{2m+(j-r)/2}}\sum_{n\in\ZZ}
\ch{\mathcal F_{2n+(j+1)/2}},
\end{equation}
from which we observe that the first sum gives characters of the lattice vertex algebra $V_{D_-}$,
\begin{equation}
\sum_{m\in\ZZ}  \ch{\mathsf{F}_{2m+(j-r)/2}} = \ch{\mathsf{V}_{[(j-r)/2]}},
\end{equation}
corresponding to the coset $D_-+\beta_-(j-r)/4$.
The second sum (the branching functions) are the following $\cW(2)$-characters:
\begin{equation}
\begin{split}
 \sum_{n\in\ZZ}
\ch{\mathcal F_{2n+1/2}}&= \ch{\mathcal{V}_{[\alpha_{1,2}]}} = \ch{W_{1,2}}, \\
\sum_{n\in\ZZ}
\ch{\mathcal F_{2n+1}}&= \ch{\mathcal{V}_{[\alpha_{2,1}]}} = \ch{W_{1,1}} + \ch{W_{2,1}},\\
\sum_{n\in\ZZ}
\ch{\mathcal F_{2n+3/2}}&=\ch{\mathcal{V}_{[\alpha_{2,2}]}} = \ch{W_{2,2}}, \\
\sum_{n\in\ZZ}
\ch{\mathcal F_{2n+2}}&= \ch{\mathcal{V}_{[\alpha_{1,1}]}} = \ch{W_{1,1}} + \ch{W_{2,1}}.                          
\end{split}
\end{equation}
This demonstrates that the $\mathcal{A}_2$-characters built from the $\TypMod{j/2}^s$ decompose as a $V_{D_-}$-character times a $\cW(2)$-character.  Similarly, the decomposition for the non-standard irreducibles follows immediately from \eqref{ch:p=2Irr}:
\begin{equation}
\sum_{s \in 4 \ZZ} \ch{\IrrMod{\lambda}^{s+r}} = \sum_{m \in \ZZ} \ch{\mathsf{F}_{2m+\lambda-r/2}} \cdot \sum_{n \in \ZZ} \ch{\mathcal F_{2n+\lambda+1 , 1}}
=  \ch{\mathsf{V}_{[\lambda-r/2]}} \cdot \ch{W_{\lambda +1,1}}.
\end{equation}
Summarizing, we get:

\begin{prop}
Characters of irreducible $\mathcal A_2$-modules decompose into 
$V_{D_-}\otimes \AKMA{sl}{2}_{-1/2}$-characters as
\begin{equation}
\begin{split}
\sum_{s\in 4\ZZ} \ch{\TypMod{j/2}^{r+s}} &=\ch{\mathsf{V}_{[-r/2]}}\cdot\ch{W_{1,2}},\qquad
\sum_{s\in 4\ZZ} \ch{\TypMod{j/2}^{r+s}} =\ch{\mathsf{V}_{[(2-r)/2]}}\cdot\ch{W_{2,2}},\\
\sum_{s \in 4 \ZZ} \ch{\IrrMod{0}^{s+r}} &= \ch{\mathsf{V}_{[-r/2]}}\cdot \ch{W_{1,1}},\qquad
\sum_{s \in 4 \ZZ} \ch{\IrrMod{1}^{s+r}} = \ch{\mathsf{V}_{[1-r/2]}}\cdot \ch{W_{2,1}}.
\end{split}
\end{equation}
\end{prop}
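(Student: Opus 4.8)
The plan is to obtain all four decompositions by resumming the single-module branching functions established above over precisely the spectral flow orbits that assemble the untwisted $\mathcal{A}_2$-modules. Recall that $\mathcal{A}_2 = \bigoplus_{m \in \ZZ} \IrrMod{0}^{4m}$ is the order-four simple current extension, so that (granting Conjecture \ref{conj:SF}) each untwisted $\mathcal{A}_2$-module is built from a single $\AKMA{sl}{2}_{-1/2}$-orbit $\set{V^{r+4t} : t \in \ZZ}$ of fixed residue $r \bmod 4$. The four irreducible $\mathcal{A}_2$-modules in the statement correspond to $V = \TypMod{0}$ and $V = \TypMod{1}$ (the standard cases, for which even $j$ yields irreducibility), together with $V = \IrrMod{0}$ and $V = \IrrMod{1}$ (the non-standard cases). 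For each such $V$ I would substitute the already-derived branching function for $\ch{V^s}$ and sum over the orbit.

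The crucial step is the factorization of the resulting double sum. Writing $s = r + 4t$ and inserting \eqref{eqn:TypBranch1} (respectively \eqref{ch:p=2Irr}), the Heisenberg label becomes $2n + \lambda - r/2 - 2t$, while the singlet label $2n + \lambda + 1/2$ (respectively the singlet module index $2n + \lambda + 1$) is independent of $t$. The change of variable $m = n - t$ then decouples the two summations:
\[
\sum_{t \in \ZZ} \sum_{n \in \ZZ} \ch{\mathsf{F}_{2(n-t) + \lambda - r/2}} \cdot \ch{\mathcal F_{2n + \lambda + 1/2}} = \Bigl( \sum_{m \in \ZZ} \ch{\mathsf{F}_{2m + \lambda - r/2}} \Bigr) \Bigl( \sum_{n \in \ZZ} \ch{\mathcal F_{2n + \lambda + 1/2}} \Bigr),
\]
and likewise in the non-standard case. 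This is exactly the mechanism already exhibited in \eqref{eqn:p=2TypDecomp}, now applied uniformly to all four modules.

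It then remains to identify the two factors. The Heisenberg factor $\sum_{m} \ch{\mathsf{F}_{2m + \lambda - r/2}}$ equals the lattice character $\ch{\mathsf{V}_{[\lambda - r/2]}}$, by the definition of $\mathsf{V}_{[\lambda - r/2]}$ as the sum of Fock spaces over the coset $D_- + \alpha_+ \beta_- (\lambda - r/2)/4$, which is the identity invoked in passing to \eqref{eqn:p=2TypDecomp}. For the second factor I would use the triplet-singlet decomposition \eqref{eq:tripletdecomp}: in the non-standard case $\sum_n \ch{M_{2n + \lambda + 1, 1}} = \ch{W_{\lambda + 1, 1}}$ directly, giving $W_{1,1}$ for $\lambda = 0$ and $W_{2,1}$ for $\lambda = 1$; in the standard case the sums $\sum_n \ch{\mathcal F_{2n + 1/2}}$ and $\sum_n \ch{\mathcal F_{2n + 3/2}}$ are $\ch{W_{1,2}}$ and $\ch{W_{2,2}}$, precisely the identifications recorded before \eqref{eqn:p=2TypDecomp}. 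Collecting these yields the four displayed identities.

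\textbf{Main obstacle.} The difficulty is bookkeeping rather than computation: one must check that the re-indexing respects the identification of spectral flow orbits with genuine untwisted $\mathcal{A}_2$-modules, so that only integer conformal-dimension shifts occur. It is this integrality constraint that restricts $j$ and $\lambda$ to the listed values and fixes the coset label $[\lambda - r/2]$ through the parity of $r$; one should also confirm that the decoupled sum converges in the appropriate completed character ring. Both points are already controlled by the data assembled in this section, so once the factorization above is justified the proposition follows by direct substitution.
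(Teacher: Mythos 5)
Your proposal is correct and takes essentially the same route as the paper: there, too, the single-module branching functions \eqref{eqn:TypBranch1} and \eqref{ch:p=2Irr} are summed over the spectral flow orbits $s \in r + 4\ZZ$, the double sum is factorised exactly as in \eqref{eqn:p=2TypDecomp}, and the two factors are identified as the lattice character $\ch{\mathsf{V}_{[(j-r)/2]}}$ and a $\cW(2)$-character via \eqref{eq:tripletdecomp}. Your closing remarks on integrality of conformal-dimension shifts (restricting to even $j$ and fixing the coset label) match the paper's implicit use of the $\mathcal{A}_2$-module classification from \cite{CR1}, so there is nothing to add.
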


\noindent In particular, we have
\begin{equation}
\ch{\mathcal A_2}=\sum_{s \in 4 \ZZ} \ch{\IrrMod{0}^{s}} = \ch{\mathsf{V}_{[0]}} \cdot \ch{W_{1,1}}.
\end{equation}

It is now straightforward to lift this analysis to the extension $\mathbb A_2$ of $\mathcal B_2$ by
the simple current $\mathbb L_0^4$:
\begin{equation}
\mathbb A_2=\bigoplus_{s\in\ZZ}\mathbb L_0^{4s}.
\end{equation}

\begin{prop}
Characters of irreducible $\mathbb A_2$-modules decompose into 
$V_{D_-}\otimes \mathcal B_2$-characters as
\begin{equation}
\begin{split}
\sum_{s\in 4\ZZ} \ch{\mathbb E_0^{r+s}} &=
\ch{\mathsf{V}_{[1-r/2]}}\cdot \ch{W_{2,2}}+
\ch{\mathsf{V}_{[-r/2]}}\cdot \ch{W_{1,2}}, \\
\sum_{s\in 4\ZZ} \ch{\mathbb L_0^{r+s}} &=
\ch{\mathsf{V}_{[1-r/2]}}\cdot \ch{W_{2,1}}+
\ch{\mathsf{V}_{[-r/2]}}\cdot \ch{W_{1,1}}. 
\end{split}
\end{equation}
Here $r\in\{0,1,2,3\}$.
\end{prop}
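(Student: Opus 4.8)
The plan is to deduce this proposition from results already established, rather than to recompute anything from scratch. The left-hand sides are the characters of the irreducible (untwisted) $\mathbb A_2$-modules: since $\mathbb A_2 = \bigoplus_{s \in \ZZ} \mathbb L_0^{4s}$ is the simple current extension of $\mathcal B_2$ by the order-infinite current $\mathbb L_0^4$, each such module is a fixed irreducible $\mathcal B_2$-module summed over its orbit under this current, and the four residues $r \in \{0,1,2,3\}$ of the spectral flow index modulo $4$ label the untwisted sectors. Thus the sums $\sum_{s \in 4\ZZ} \ch{\mathbb E_0^{r+s}}$ and $\sum_{s \in 4\ZZ} \ch{\mathbb L_0^{r+s}}$ really are $\mathbb A_2$-characters, and the content of the proposition is the product form of their branching.

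The cleanest route is to lift the corresponding $\mathcal A_2$ result through the decomposition of the $\mathcal B_2$-modules as $\AKMA{sl}{2}_{-1/2}$-modules. Recall that, at the level of characters,
\begin{equation*}
\mathbb L_0^s = \IrrMod{0}^s \oplus \IrrMod{1}^s, \qquad \mathbb E_\lambda^s = \TypMod{\lambda}^s \oplus \TypMod{\lambda+1}^s,
\end{equation*}
so that
\begin{equation*}
\sum_{s \in 4\ZZ} \ch{\mathbb L_0^{r+s}} = \sum_{s \in 4\ZZ} \ch{\IrrMod{0}^{r+s}} + \sum_{s \in 4\ZZ} \ch{\IrrMod{1}^{r+s}}, \qquad \sum_{s \in 4\ZZ} \ch{\mathbb E_0^{r+s}} = \sum_{s \in 4\ZZ} \ch{\TypMod{0}^{r+s}} + \sum_{s \in 4\ZZ} \ch{\TypMod{1}^{r+s}}.
\end{equation*}
Each summand on the right is one of the $\mathcal A_2$-branching identities proven just above: the two non-standard sums give $\ch{\mathsf{V}_{[-r/2]}}\ch{W_{1,1}}$ and $\ch{\mathsf{V}_{[1-r/2]}}\ch{W_{2,1}}$, while the two standard sums (applying the $\TypMod{j/2}$ formula with $j=0$ and $j=2$) give $\ch{\mathsf{V}_{[-r/2]}}\ch{W_{1,2}}$ and $\ch{\mathsf{V}_{[1-r/2]}}\ch{W_{2,2}}$. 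Adding the respective pairs reproduces the two displayed decompositions verbatim.

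Alternatively, one could argue directly from the decomposition of the $\mathcal B_2$-characters into $\AKMA{gl}{1}$ and $\cM(2)$ characters established earlier: substitute those formulae into the left-hand sides, interchange the sums over $s \in 4\ZZ$ and $n \in \ZZ$, collect the Heisenberg factors into a lattice character $\ch{\mathsf{V}_{[\cdot]}}$ as in the $\mathcal A_2$ computation, and recognise the remaining sum of singlet characters as a triplet character via $W_{r,s} = \bigoplus_{k} M_{2k+r,s}$. I expect the only real difficulty to be bookkeeping: tracking the half-integer shifts $-r/2$ and $1-r/2$ in the lattice coset labels, splitting the summation index by parity so that the two triplet modules $W_{1,s}$ and $W_{2,s}$ separate cleanly, and confirming that the spectral flow orbit respects the direct-sum decomposition of each $\mathcal B_2$-module. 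The lifting route avoids most of this by reusing the verified $\mathcal A_2$ identities directly, leaving only the observation that the two constituents of each $\mathcal B_2$-module carry the same spectral flow index, which is immediate from $\mathcal B_2$ being a simple current extension of $\AKMA{sl}{2}_{-1/2}$.
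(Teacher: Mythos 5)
Your proposal is correct and follows essentially the same route as the paper: the paper's (implicit) proof is precisely the ``lifting'' you describe, combining the decompositions $\mathbb L_0^s = \IrrMod{0}^s \oplus \IrrMod{1}^s$ and $\mathbb E_\lambda^s = \TypMod{\lambda}^s \oplus \TypMod{\lambda+1}^s$ with the previously established $\mathcal A_2$-branching identities for $\IrrMod{0}$, $\IrrMod{1}$, $\TypMod{0}$ and $\TypMod{1}$, then adding the pairs. Your identification of the two standard summands with the $j=0$ and $j=2$ cases (yielding $W_{1,2}$ and $W_{2,2}$ with lattice labels $[-r/2]$ and $[1-r/2]$, respectively) matches the paper exactly.
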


\noindent We remark that the special case
\begin{equation}
\ch{\mathbb A_2}=\sum_{s \in 4 \ZZ} \ch{\mathbb L_0^{s}}= \ch{\mathsf{V}_{[0]}} \cdot \ch{W_{1,1}}+\ch{\mathsf{V}_{[1]}}\cdot \ch{W_{2,1}}=\ch{\ker_{V_D}(Q_-)}
\end{equation}
is consistent with Theorem \ref{thm:triplet}, so it is natural to conjecture that $\mathbb A_2\cong \ker_{V_D}(Q_-)$.

\subsection{Branching functions for $\mathcal B_3$}

We now turn to the case $p=3$ and $\AKMA{sl}{2}$ of level $k=-4/3$.
As with $k=-1/2$, the irreducible modules may be described as being standard or non-standard. 
The irreducible standard modules $\TypMod{\lambda}^s$ 
again have spectral flow index $s\in\ZZ$ and weight label $\lambda\in\RR/ 2\ZZ$, but now we require that 
$\lambda\neq\pm 2/3\bmod{2}$.  The non-standard irreducibles fall into two families 
$\IrrMod{0}^s$ and $\IrrMod{-2/3}^s$.
When $\lambda=\pm 2/3 \bmod{2}$, the standard modules have the following non-split short exact sequences:
\begin{equation}
0 \longrightarrow \IrrMod{0}^{s+1} \longrightarrow \TypMod{2/3,+}^s 
\longrightarrow \IrrMod{-2/3}^{s-1}  \longrightarrow 0,\qquad
0 \longrightarrow \IrrMod{-2/3}^{s+1} \longrightarrow \TypMod{-2/3,+}^s 
\longrightarrow \IrrMod{0}^{s-1}  \longrightarrow 0.
\end{equation}
As before, the subindex $+$ indicates that $\TypMod{\pm 2/3,+}$
possesses a highest weight vector. Its conjugate module is denoted by $\TypMod{\mp 2/3,-}$.

\subsubsection{Branching functions and the singlet theory}

We now verify explicitly that the $\cM(3)$-characters are precisely the branching functions obtained by decomposing irreducible $\AKMA{sl}{2}_{-4/3}$-characters into irreducible $\AKMA{gl}{1}$-characters.  
Denoting the irreducible $\AKMA{gl}{1}$-modules by $\mathsf{F}_{\lambda}$, where $\lambda$ is the common weight, the characters are
\begin{equation}
 \ch{\mathsf{F}_{\lambda}} = \frac{z^\lambda q^{-3\lambda^2/16}}{\func{\eta}{q}}.
\end{equation}
Moreover, we recall from Section \ref{sec:Screening} that the $\cM(3)$-modules 
$\mathcal F_{\mu}$ (of central charge $c=-7$) are irreducible when $\mu \notin \ZZ$ and that their characters have the form
\begin{equation} \label{ch:TypSing2}
\ch{\mathcal F_{\mu}} = \frac{q^{(\mu - \alpha_0/2)^2/2}}{\func{\eta}{q}},
\end{equation}
where $\alpha_0 = \sqrt{8/3}$ for $p=3$.  The character decomposition realising the generic singlet characters as branching functions is then
\begin{equation} \label{eqn:TypBranch2}
\begin{split}
\ch{\TypMod{\lambda}^s} &= \sum_{n \in \ZZ} \frac{z^{2n+\lambda-4s/3} 
q^{-3(2n+\lambda-4s/3)^2/16}}{\func{\eta}{q}} 
\cdot \frac{q^{3(2n+\lambda)^2/16}}{\func{\eta}{q}}\\
 &= \sum_{n \in \ZZ}\ch{\mathsf{F}_{2n+\lambda-4s/3}}
\cdot \ch{\mathcal F_{(2n+\lambda)/\alpha_0+\alpha_0/2}}.
\end{split}
\end{equation}

For the non-generic irreducible characters, there are again expressions in terms of infinite linear combinations of characters of the forms \eqref{ch:TypSL2} \cite{CR1}:
\begin{equation}
\begin{split}
\ch{\IrrMod{0}^s} = \sum_{\ell = 0}^{\infty} 
\brac{\ch{\TypMod{-2/3}^{3 \ell + s + 1}}-\ch{\TypMod{2/3}^{3 \ell + s + 2}}},\qquad
\ch{\IrrMod{-2/3}^s} = \sum_{\ell = 0}^{\infty} 
\brac{\ch{\TypMod{2/3}^{3 \ell + s + 1}}-\ch{\TypMod{-2/3}^{3 \ell + s + 3}}}.
\end{split}
\end{equation}
Applying \eqref{eqn:TypBranch2}, we therefore obtain
\begin{equation}\label{ch:p=3Irr}
\begin{split}
\ch{\IrrMod{0}^s} &= \sum_{n \in \ZZ} \ch{\mathsf{F}_{2n-2-4s/3}} \sum_{\ell = 0}^{\infty} 
\brac{\ch{\mathcal F_{(2n+4\ell-2/3)/\alpha_0+\alpha_0/2}}-
\ch{\mathcal F_{(2n+4\ell+2/3)/\alpha_0+\alpha_0/2}}},\\
\ch{\IrrMod{-2/3}^s} &= \sum_{n \in \ZZ} \ch{\mathsf{F}_{2n-8/3-4s/3}} 
\sum_{\ell = 0}^{\infty} \brac{\ch{\mathcal F_{(2n+4\ell-4/3)/\alpha_0+\alpha_0/2}}-
\ch{\mathcal F_{(2n+4\ell+4/3)/\alpha_0+\alpha_0/2}}}.\\
\end{split}
\end{equation}
Simplifying, we arrive at:

\begin{prop} Characters of $\AKMA{sl}{2}$-modules at level $k=-4/3$ decompose into
$\AKMA{gl}{1}$ and $\cM(3)$ characters as follows:
\begin{equation}
\begin{split}
\ch{\TypMod{\lambda}^s} &= \sum_{n \in \ZZ}\ch{\mathsf{F}_{2n+\lambda-4s/3}}
\cdot \ch{\mathcal F_{(2n+\lambda)/\alpha_0+\alpha_0/2}},\\
\ch{\IrrMod{0}^s} &= \sum_{n \in \ZZ} \ch{\mathsf{F}_{2n-2-4s/3}}\cdot\ch{M_{n,1}},\qquad
\ch{\IrrMod{-2/3}^s} = \sum_{n \in \ZZ} \ch{\mathsf{F}_{2n-8/3-4s/3}}\cdot\ch{M_{n,2}}.
\end{split}
\end{equation}
\end{prop}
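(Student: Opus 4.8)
The first decomposition requires no further work: it is precisely the content of \eqref{eqn:TypBranch2}, which already expresses $\ch{\TypMod{\lambda}^s}$ as the claimed sum of products of $\AKMA{gl}{1}$- and generic $\cM(3)$-characters. The substance of the proposition therefore lies in the two non-standard decompositions, and for these the plan is to start from \eqref{ch:p=3Irr} and to recognise the inner alternating sums of Fock characters as single irreducible singlet characters.

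Concretely, I would substitute the explicit generic singlet character \eqref{ch:TypSing2} into the inner sums of \eqref{ch:p=3Irr}. For the argument $\mu = (2n+4\ell \mp 2/3)/\alpha_0 + \alpha_0/2$ appearing in the $\IrrMod{0}^s$ case, the exponent $(\mu - \alpha_0/2)^2/2$ reduces, upon using $\alpha_0^2 = 8/3$ and the factorisation $2n + 4\ell \mp 2/3 = \tfrac{2}{3}\bigl(3(n+2\ell) \mp 1\bigr)$, to $\bigl(3(n+2\ell) \mp 1\bigr)^2/12$. These are exactly the two $q$-exponents appearing in the singlet character formula \eqref{eq:singletchars} for $M_{n,1}$ at $p=3$, $s=1$, with summation index identified as $k = \ell$. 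Hence the inner sum over $\ell$ equals $\ch{M_{n,1}}$ term-for-term, which gives the second decomposition. The third follows identically: for $\IrrMod{-2/3}^s$ the argument $\mu = (2n+4\ell \mp 4/3)/\alpha_0 + \alpha_0/2$ produces the exponent $\bigl(3(n+2\ell) \mp 2\bigr)^2/12$, matching \eqref{eq:singletchars} for $M_{n,2}$ at $s=2$.

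The one point demanding care is that the outer index $n$ ranges over all of $\ZZ$, whereas the closed form \eqref{eq:singletchars} is stated for first label $r \geq 1$. For $n \leq 0$ I would invoke the reflection symmetry $\ch{M_{r,s}} = \ch{M_{2-r,p-s}}$ recorded in \eqref{eq:singletchars} to reindex the offending terms to a positive first label; since the $q$-exponents $\bigl(3(n+2\ell) \mp s\bigr)^2/12$ are manifestly invariant under this reflection, the identification of the inner sum with $\ch{M_{n,s}}$ persists for every $n \in \ZZ$. I expect this bookkeeping --- keeping the two sign choices, the shift by $\alpha_0/2$, and the $n \leq 0$ reflection mutually consistent --- to be the only genuine obstacle; conceptually nothing is required beyond re-summing the $q$-series already established in \eqref{ch:p=3Irr} and matching them against the singlet characters \eqref{eq:singletchars}.
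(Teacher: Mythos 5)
Your strategy is the paper's own: the standard-module decomposition is \eqref{eqn:TypBranch2} verbatim, and the non-standard cases are obtained by re-summing \eqref{ch:p=3Irr} against \eqref{eq:singletchars}; your exponent computations (reduction to $(3(n+2\ell)\mp s)^2/12$ with $s=1,2$) are correct and settle the case $n\geq 1$ term-for-term. The genuine gap is in your treatment of $n\leq 0$. The reflection you invoke, $\ch{M_{r,s}}=\ch{M_{2-r,p-s}}$, does \emph{not} leave the $q$-exponents invariant: replacing $(r,s)$ by $(2-r,p-s)$ turns exponents of the form $((r+2k)p\mp s)^2/4p$ into $((2-r+2k)p\mp(p-s))^2/4p$, and these are genuinely different series. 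Concretely, at $p=3$, $n=0$, $s=1$, the inner sum in \eqref{ch:p=3Irr} is $\eta(q)^{-1}\bigl(q^{25/12}-q^{49/12}+q^{121/12}-\cdots\bigr)$, which is $\ch{M_{2,1}}$, whereas the character your cited reflection would produce, $\ch{M_{2,2}}$, equals $\eta(q)^{-1}\bigl(q^{16/12}-q^{64/12}+\cdots\bigr)$; the exponent sets are disjoint, so your ``manifest invariance'' claim equates two unequal series. In fact the relation as printed in \eqref{eq:singletchars} is a typo: the symmetry consistent with the paper's own Virasoro decompositions, and with its remark that $M_{r,s}$ and $M_{2-r,s}$ are isomorphic as Virasoro modules, is $\ch{M_{r,s}}=\ch{M_{2-r,s}}$, with the \emph{second} label unchanged.

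Even with the correct symmetry, the $n\leq 0$ case is not a mere reindexing of \eqref{eq:singletchars} at first label $2-n$, because the inner sum contains extra terms. Writing $c_m = q^{(3m-s)^2/12}-q^{(3m+s)^2/12}$, the inner sum is $\eta(q)^{-1}\sum_{\ell\geq 0}c_{n+2\ell}$, and for $n\leq 0$ the indices $m=n+2\ell$ include non-positive values. Since $c_{-m}=-c_m$ and $c_0=0$, each term with $n\leq m\leq -1$ cancels its mirror with $1\leq m\leq -n$, and only after this internal cancellation does the surviving tail $\eta(q)^{-1}\sum_{m\geq 2-n,\ m\equiv n\,(\mathrm{mod}\,2)}c_m$ match the closed form, giving $\ch{M_{2-n,s}}=\ch{M_{n,s}}$. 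So your final identities are correct, but the step you identified as ``the only genuine obstacle'' is resolved by this sign-flip cancellation, not by the reflection-invariance you asserted; as written, that step of your argument fails.
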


\noindent Note that this again reflects Theorem \ref{thm:MCoset}.

\subsubsection{Branching functions and the triplet theory}

The candidates for the triplet generators $W^{\pm}$ have been identified as belonging to the extended algebra $\mathcal A_3$. 
This time, the fusion orbits through the standard modules $\TypMod{\lambda}^s$ of $\AKMA{sl}{2}_{-4/3}$
give rise to (untwisted) extended algebra modules if the weight $\lambda$ is in $\{0,\pm 2/3\}$.
We therefore obtain a family of extended modules parametrised by $\lambda=0,\pm 2/3$ and the spectral flow index $r=0,1,2$ \cite{CR1}.  When $\lambda=0$, the resulting $\mathcal A_3$-module is irreducible; otherwise, they are reducible but indecomposable.  Their characters take the form
\begin{equation} \label{eqn:p=2TypDecomp'}
\begin{split}
 \sum_{s\in 3\ZZ} \ch{\TypMod{\lambda}^{s+r}} &= \sum_{n,s\in\ZZ}  \ch{\mathsf{F}_{2n+\lambda -4r/3-4s}} \cdot
\ch{\mathcal F_{(2n+\lambda)/\alpha_0+\alpha_0/2}}\\
&= \ch{\mathsf{V}_{[\lambda-4r/3]}}\cdot\ch{V_{[\alpha_{2,3\lambda/2}]}}+        \ch{\mathsf{V}_{[\lambda+2-4r/3]}}\cdot\ch{V_{[\alpha_{1,3\lambda/2}]}}
\end{split}
\end{equation}
Here, as in the last section, the $\mathsf{V}_{[\lambda-4r/3]}$
are the modules of the lattice vertex algebra $V_{D_-}$ corresponding to the coset $D_-+\beta_-\alpha_+(\lambda-4r/3)/4$:
\begin{equation}
\sum_{m\in\ZZ}  \ch{\mathsf{F}_{4m+\lambda-4r/3}} = \ch{\mathsf{V}_{[\lambda-4r/3]}}.
\end{equation}
The branching functions are the $\cW(3)$-characters
\begin{equation}
\sum_{n\in\ZZ}
\ch{\mathcal F_{(4n+\lambda)/\alpha_0+\alpha_0/2}}=\sum_{n\in\ZZ}
\ch{\mathcal F_{\alpha_{2n,3\lambda/2}}}=\ch{V_{[\alpha_{1,3\lambda_2}]}}
\end{equation}
which are, in terms of irreducible $\cW(3)$-characters,
\begin{equation}
\begin{split}
 \ch{\mathcal{V}_{[\alpha_{2,0}]}} &= \ch{W_{1,3}},\qquad \qquad\qquad\ \ \,
 \ch{\mathcal{V}_{[\alpha_{1,0}]}} \,=\, \ch{W_{2,3}},  \\
\ch{\mathcal{V}_{[\alpha_{1,1}]}} &= \ch{W_{1,1}} + \ch{W_{2,2}},\qquad      
\ch{\mathcal{V}_{[\alpha_{2,1}]}} \,=\, \ch{W_{2,1}} +\ch{W_{1,2}}, \\       
\ch{\mathcal{V}_{[\alpha_{1,-1}]}} &= \ch{W_{1,1}} + \ch{W_{2,2}},\qquad      
\ch{\mathcal{V}_{[\alpha_{2,-1}]}} = \ch{W_{2,1}} +\ch{W_{1,2}}. \\   
\end{split}
\end{equation}
This demonstrates that the $\mathcal{A}_3$-characters built from the $\TypMod{\lambda}^s$ decompose as a $V_{D_-}$-character times a $\cW(3)$-character.  Similarly, the non-standard irreducibles $\IrrMod{0}$ and $\IrrMod{-2/3}$ give rise, via \eqref{ch:p=3Irr}, to
\begin{equation}
\begin{split}
\sum_{s \in 3 \ZZ} \ch{\IrrMod{0}^{s+r}} &= 
\sum_{m,s \in \ZZ} \ch{\mathsf{F}_{2m-2-4r/3-4s}} \cdot \ch{M_{m,1}}, \\
\sum_{s \in 3 \ZZ} \ch{\IrrMod{-2/3}^{s+r}} &= 
\sum_{m,s \in \ZZ} \ch{\mathsf{F}_{2m-8/3-4r/3-4s}} \cdot \ch{M_{m,2}}. \\
\end{split}
\end{equation}
Simplifying now gives

\begin{prop}
Characters of $\mathcal A_3$-modules decompose into 
$V_{D_-}\otimes \AKMA{sl}{2}_{-4/3}$-characters as
\begin{equation}
\begin{split}
\sum_{s\in 3\ZZ} \ch{\TypMod{0}^{s+r}} &=\ch{\mathsf{V}_{[-4r/3]}}\cdot \ch{W_{1,3}}+
\ch{\mathsf{V}_{[2-4r/3]}}\cdot \ch{W_{2,3}}, \\
\sum_{s \in 3 \ZZ} \ch{\IrrMod{0}^{s+r}} &= 
\ch{\mathsf{V}_{[2-4r/3]}} \cdot \ch{W_{2,1}}+\ch{\mathsf{V}_{[-4r/3]}} \cdot \ch{W_{1,1}},\\
\sum_{s \in 3 \ZZ} \ch{\IrrMod{-2/3}^{s+r}} &= 
\ch{\mathsf{V}_{[4/3-4r/3]}} \cdot \ch{W_{2,2}}+\ch{\mathsf{V}_{[-2/3-4r/3]}} \cdot \ch{W_{1,2}}.
\end{split}
\end{equation}
Here $r\in\{0,1,2\}$.
\end{prop}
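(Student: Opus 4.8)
The plan is to derive all three decompositions by assembling character identities already established earlier in this section, so that the proposition becomes a bookkeeping exercise in reorganising sums of Heisenberg, singlet and triplet characters. For the first line I would specialise the standard-module decomposition \eqref{eqn:p=2TypDecomp'} to $\lambda=0$, which gives directly
\[
\sum_{s\in 3\ZZ}\ch{\TypMod{0}^{s+r}}=\ch{\mathsf{V}_{[-4r/3]}}\cdot\ch{\mathcal{V}_{[\alpha_{2,0}]}}+\ch{\mathsf{V}_{[2-4r/3]}}\cdot\ch{\mathcal{V}_{[\alpha_{1,0}]}},
\]
and then substitute the $\cW(3)$-character identities $\ch{\mathcal{V}_{[\alpha_{2,0}]}}=\ch{W_{1,3}}$ and $\ch{\mathcal{V}_{[\alpha_{1,0}]}}=\ch{W_{2,3}}$ recorded above to reach the stated form.

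For the second and third lines I would start from the simplified non-standard decompositions of the preceding proposition, namely $\ch{\IrrMod{0}^{s}}=\sum_{n\in\ZZ}\ch{\mathsf{F}_{2n-2-4s/3}}\cdot\ch{M_{n,1}}$ and its analogue for $\IrrMod{-2/3}^{s}$ with $M_{n,2}$ and the shift $8/3$. Replacing $s$ by $s+r$ and summing over the orbit $s\in 3\ZZ$ (so $s=3t$) turns $4s/3$ into the integer shift $4t$, producing the double sum $\sum_{n,t\in\ZZ}\ch{\mathsf{F}_{2n-2-4r/3-4t}}\cdot\ch{M_{n,1}}$. The mechanism is then identical to that underlying \eqref{eqn:p=2TypDecomp'}: split the singlet label $n$ according to parity and change variables in the Heisenberg index to decouple the lattice sum from the singlet sum. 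Writing $n=2k$ and setting $u=k-t$ makes the Heisenberg index $4u-2-4r/3$ independent of $k$, so the sum over $u$ collapses via $\sum_{u}\ch{\mathsf{F}_{4u+\mu}}=\ch{\mathsf{V}_{[\mu]}}$ to a single $V_{D_-}$-character while the surviving sum over $k$ recombines the singlet characters through \eqref{eq:tripletdecomp}, $W_{r,s}=\bigoplus_{k}M_{2k+r,s}$, into $\ch{W_{2,1}}$; the odd case $n=2k+1$ produces $\ch{\mathsf{V}_{[-4r/3]}}\cdot\ch{W_{1,1}}$ in the same way. Finally I would apply the periodicity $\mathsf{V}_{[\mu]}=\mathsf{V}_{[\mu+4]}$ (coming from $D_-=\ZZ\alpha_+\beta_-$) to rewrite labels such as $-2-4r/3$ in the normalised form $2-4r/3$ appearing in the statement; the third line follows verbatim with $M_{n,2}$, $W_{1,2}$ and $W_{2,2}$ and the labels $-2/3-4r/3$, $4/3-4r/3$.

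I expect the only genuine obstacle to be the combinatorial bookkeeping of this double sum: one must check that the substitution $u=k-t$ is a bijection of the summation range within each parity class (so that the $u$-sum is genuinely independent of $k$), that the even- and odd-$n$ contributions land on the correct pair of lattice labels modulo $4$, and that the singlet characters assemble into precisely $W_{1,1},W_{2,1}$ (respectively $W_{1,2},W_{2,2}$) and no other combination. Since every series involved converges as a formal $q$-character in the usual $\abs{q}<1$ regime, these rearrangements are legitimate and the remaining computation is routine.
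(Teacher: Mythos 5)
Your proposal is correct and takes essentially the same route as the paper: the first line is exactly the specialisation of \eqref{eqn:p=2TypDecomp'} to $\lambda=0$ combined with the recorded identifications $\ch{\mathcal{V}_{[\alpha_{2,0}]}}=\ch{W_{1,3}}$ and $\ch{\mathcal{V}_{[\alpha_{1,0}]}}=\ch{W_{2,3}}$, while your parity split of the singlet label together with the change of variables $u=k-t$, the collapse $\sum_{u}\ch{\mathsf{F}_{4u+\mu}}=\ch{\mathsf{V}_{[\mu]}}$, the recombination via \eqref{eq:tripletdecomp}, and the mod-$4$ normalisation of lattice labels is precisely the ``simplifying'' step the paper applies to the orbit sums obtained from \eqref{ch:p=3Irr}. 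The bookkeeping points you flag all check out, so nothing is missing.
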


\noindent This result, together with Theorem \ref{thm:triplet}, suggests that
$\mathcal A_3\cong \ker_{V_D}(Q_-)$ because
\begin{equation}
\ch{\mathcal A_3}=\sum_{s \in 3 \ZZ} \ch{\IrrMod{0}^{s}} = \ch{\mathsf{V}_{[0]}} \cdot \ch{W_{1,1}}+\ch{\mathsf{V}_{[2]}} \cdot \ch{W_{2,1}}=\ch{\ker_{V_D}(Q_-)}.
\end{equation}

\end{document}